\definecolor {processblue}{cmyk}{0.96,0,0,0}
\newtheorem{theorem}{Theorem}
\newtheorem{lemma}[theorem]{Lemma}
\newtheorem{proposition}[theorem]{Proposition}
\def\qed{\ifhmode\unskip\nobreak\fi\quad\ifmmode\Box\else$\Box$\fi}
\begin{document}
\title{The $6\times 6$ grid is $4$-path-pairable}
\author{{\sl Adam S. Jobson} \\ University of Louisville\\ Louisville, KY 40292
\and {\sl Andr\'e E. K\'ezdy}\\ University of Louisville\\ Louisville, KY 40292
\and {\sl Jen\H{o} Lehel} \\ University of Louisville\\ Louisville, KY 40292
\\
and 
\\
Alfr\'ed R\'enyi Mathematical Institute, \\
Budapest, Hungary}

\maketitle

\begin{abstract}
Let $G=P_6\Box P_6$ be the $6\times 6$ grid, the Cartesian product of two  
paths of six vertices. Let $T$ be the set of eight distinct vertices of $G$, called terminals, and assume that $T$ is partitioned into four terminal pairs 
$\{s_i,t_i\}$, $1\leq i\leq 4$. We prove that $G$ is $4$-path-pairable, that is, for every $T$ there 
exist in $G$ pairwise edge disjoint $s_i,t_i$-paths, $1\leq i\leq 4$.  \end{abstract}

\section{Introduction}

For $k$ fixed, a graph $G$ is {\it $k$-path-pairable}, if for any set of $k$ disjoint pairs of vertices, $s_i,t_i$, $1\leq i\leq k$, there exist pairwise edge-disjoint $s_i,t_i$-paths in $G$. The {\it path-pairability number}, denoted $pp(G)$,  is the largest $k$ such that $G$ is $k$-path-pairable.  

In \cite{pxp} we determine the path-pairability number of the grid graph $G(a,b)=P_{a}\Box P_{b}$, the Cartesian product of two paths  on $a$ and $b$ vertices, where there is an edge between
two vertices, $(i,j)$ and $(p,q)$, if and only if $|p-i|+|q-j|=1$, for $1\leq i\leq a$, $1\leq j\leq b$.  

\begin{theorem}[\cite{pxp}]  
\label{PxP} If  $k=\min\{a,b\}$, then
 $$pp(G(a,b))=\left\{
 \begin{array}{ccllll}
&k-1&  \hbox{ for } &k&=2,3,4\\
&3& \hbox{ for } &k&=5 \\
&4&  \hbox{ for } &k&\geq 6  \qquad 
\end{array}\right..$$
\end{theorem}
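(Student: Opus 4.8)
The plan is to prove the characterization by normalizing $a\le b$, setting $k=a$, and splitting into two dual tasks for each regime of $k$: an \textbf{upper bound} exhibiting one terminal configuration that admits no edge-disjoint routing, and a \textbf{lower bound} producing such a routing for \emph{every} configuration. A single observation shapes the whole argument. Every balanced edge cut of $P_a\Box P_b$ is large: a straight cut between two adjacent columns already has exactly $a$ edges, and an isoperimetric check shows no cut with both sides large can be smaller. Consequently the naive obstruction ``more split pairs than cut edges'' never rules out more than $\min\{a,b\}$ pairs, so none of the upper bounds—least of all the uniform cap $pp(G(a,b))\le 4$ valid for arbitrarily large grids—can come from cut size alone. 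The upper bounds must instead be \emph{crossing/parity} obstructions exploiting planarity.

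For the upper bounds I would first handle $k\in\{2,3,4\}$ by placing the $k$ pairs so that they straddle a column cut $\{f_1,\dots,f_k\}$ in a \emph{crossed} pattern: the sources $s_1,\dots,s_k$ occupy one extreme column top-to-bottom and the targets $t_1,\dots,t_k$ occupy the opposite column in reverse order. Each of the $k$ edge-disjoint paths must cross this $k$-edge cut and therefore uses exactly one $f_j$; analyzing how the paths traverse the first two columns—exactly as in the $P_2\Box P_b$ ladder, where the single rung cannot simultaneously carry two oppositely directed paths—forces two paths to share a boundary edge, a contradiction. I would make this rigorous by an inductive peeling of columns that reduces a crossed instance on $P_a\Box P_b$ to one on $P_a\Box P_{b-1}$, bottoming out at the square, which is checked directly; this yields $pp\le k-1$ for $k\le 4$. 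The remaining upper bounds, namely not-$4$-pairability when $k=5$ and the universal not-$5$-pairability when $k\ge 6$, are the delicate ones: here I would confine a bounded corner/boundary \emph{gadget} hosting the pairs and argue that a purely local crossing-plus-parity constraint forces edge reuse regardless of the surrounding grid's size, so that the obstruction transfers verbatim to all sufficiently large grids.

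For the lower bounds the work is constructive. I would first secure the \emph{base cases} by finite (computer-assisted) verification: that $P_3\Box P_3$ is $2$-path-pairable, $P_4\Box P_4$ and $P_5\Box P_5$ are $3$-path-pairable, and—most importantly—that $P_6\Box P_6$ is $4$-path-pairable, which is precisely the main theorem of this paper and the hardest base case. I would then extend each base case to all larger grids of the same regime by a \emph{peeling reduction}: given $a\le b$ with $b$ larger than the base width, delete the last column, prepend to each terminal lying in it the horizontal edge $(i,b)$--$(i,b-1)$ (these $a$ corridor edges are edge-disjoint, one per row), and apply induction to $P_a\Box P_{b-1}$; for the $k\ge 6$ regime one peels rows as well, shrinking down to $P_6\Box P_6$. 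The one genuine subtlety in the reduction is \emph{collision avoidance}—an inward-pushed terminal must not land on an existing terminal—which I would resolve with a short lemma that first permutes/reroutes terminals within the peeled column (a path, hence easy to manage) before moving them inward; with at most $10$ terminals and ample room this always succeeds.

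The hard part, in my estimation, is twofold. The constructive core is the $P_6\Box P_6$ base case: because cuts give no leverage, one cannot route greedily and must instead organize a genuine case analysis (by the relative positions of the eight terminals, up to the grid's symmetries) showing that four edge-disjoint paths always exist, which is exactly why this single case merits its own paper. The conceptual core is the uniform upper bound $pp\le 4$: proving that \emph{no} grid, however large, is $5$-path-pairable requires isolating a local planar obstruction and showing it is insensitive to the ambient dimensions, and similarly explaining the anomaly that $k=5$ behaves like $k=4$ rather than like $k=6$. I expect both the correct gadget for the $k\ge 6$ and $k=5$ upper bounds and the exhaustiveness of the $6\times 6$ routing analysis to be where the real effort lies.
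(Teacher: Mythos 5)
Your overall architecture is the right one, and it matches how the formula is actually established: the paper itself does not reprove Theorem \ref{PxP} here but cites \cite{pxp}, contributing only the missing $6\times 6$ case, split exactly as you propose into an upper-bound configuration (Proposition \ref{66upper}) and a constructive lower bound (Proposition \ref{66}); your column-peeling reduction with collision avoidance is also the standard and sound way to propagate base cases. Moreover, your crossed-pattern construction for $k=2,3,4$ genuinely works, and for the reason you indicate: each of the $k$ paths must cross every inter-column cut, each cut has exactly $k$ edges, so every path crosses every cut exactly once, the paths are column-monotone, and the per-column vertical transitions must be pairwise edge-disjoint intervals, which forces the identity permutation and contradicts any crossed assignment.

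The genuine gaps sit precisely at the two places you flag as hard, and in one of them your diagnosis points the wrong way. You argue that since balanced cuts are large, the $k=5$ and $k\geq 6$ upper bounds ``must be crossing/parity obstructions exploiting planarity,'' and you leave the gadget unspecified. But the paper's uniform obstruction (Proposition \ref{66upper}) uses no parity or planarity at all: it is an \emph{unbalanced} corner cut plus a vertex bottleneck. Eight of the ten terminals are clustered in the $3\times 3$ corner; four paths must leave the upper-left $2\times 2$ square through its four boundary edges, two of them are then funneled into the vertex $(3,2)$, which must additionally emit $P_4$ but has only two free edges --- pure local counting, manifestly independent of the ambient grid size, which is exactly the transferability you wanted but obtained by degrees and small cuts, not parity. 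Your dismissal of cut-based arguments overlooks such corner cuts, and you offer no candidate construction at all for the $k=5$ anomaly (non-$4$-pairability of $P_5\Box P_b$), where your crossed-column mechanism cannot apply since four pairs do not saturate the five-edge cuts. Separately, you delegate the $6\times 6$ lower bound to computer search; the paper instead proves it by a structured human case analysis (q-diagrams of terminal distributions over quadrants, escape lemmas for crowded and sparse quadrants, frames on the cycles $C_0,C_1$, weak $2$-linkedness of $P_3\Box P_k$, and $3$-path-pairability of $P_4\Box P_k$). Exhaustive verification is plausible in principle, but with roughly $\binom{36}{8}\cdot 105$ pairings each requiring an edge-disjoint linkage feasibility test it is a serious project whose completeness you would still have to certify; as written, the crux of the theorem is assumed rather than proved.
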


We complete the proof of the formula in Theorem \ref{PxP} by proving our main result:

\begin{theorem}
\label{main}$pp(G(6,6))=4$. 
\end{theorem}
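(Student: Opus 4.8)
The plan is to establish the lower bound: that for every partition of the eight terminals into four pairs $\{s_i,t_i\}$ there exist pairwise edge-disjoint $s_i,t_i$-paths; the matching upper bound $pp(G(6,6))\le 4$ is part of Theorem~\ref{PxP} and is proved in \cite{pxp}, so completing Theorem~\ref{main} amounts to this one direction. I fix coordinates $(r,c)$ with $1\le r,c\le 6$ and use throughout the dihedral symmetry group of order $8$ of the square. The single serious difficulty is the sheer number of placements of eight terminals; the strategy is to compress them with the symmetry group, to read off from the cut structure which placements are actually constrained, and to route the rest by a uniform ``channel'' construction.

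First I would record the only necessary condition that ever binds. For four demand pairs the cut condition is always satisfiable, and among the small cuts the only one that a placement of four pairs can saturate consists of the four edges joining a corner $2\times 2$ block to the rest of the grid (its size, $4$, equals the number of terminals the block can hold). Hence the extremal configurations are exactly those in which all four sources fill the four cells of one corner block while the four sinks lie outside; every other placement leaves slack in all of these small cuts. This both tells me where to expect trouble and motivates the routing template, since any successful scheme must be able to thread four edge-disjoint paths out of a corner through precisely those four edges.

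Next comes the routing toolkit for the non-extremal bulk of the cases. The idea is to connect each pair by an L-shaped (or monotone staircase) path that uses one reserved row for its horizontal travel and one reserved column for its vertical travel; two such paths are edge-disjoint once their horizontal parts lie in distinct rows and their vertical parts in distinct columns. Because there are six rows and six columns but only four pairs, I would choose, via a system-of-distinct-representatives (Hall) argument, distinct ``corridor'' rows and columns for the pairs and link each terminal to its corridors by short stubs. When the two terminals of a pair already share a row, a column, or a common small block, I would instead connect them directly by a short path lying in a region avoiding the other terminals, delete its edges, and recurse on the remaining three pairs in the (still richly connected) residual graph; this peeling repeatedly lowers the number of pairs in the easy regime.

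The hard part, as the cut analysis predicts, is the clustered configurations, above all the extremal corner case and its near-relatives (three terminals in one corner block, or two corners each loaded), where the pigeonhole forced by $8>6$ makes corridors collide and the SDR assignment fails. For these I expect to route by hand: thread the four paths out of the corner through its four boundary edges and then weave them around one another to their outside targets, maintaining edge-disjointness by careful bookkeeping. The role of the order-$8$ symmetry here is essential---it cuts the genuinely different clustered patterns down to a short, explicitly checkable list---and verifying edge-disjointness in each of these tight, cut-saturating sub-cases is the main obstacle of the whole argument.
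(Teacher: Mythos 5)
There are two genuine gaps here. First, the upper bound is not something you can outsource to \cite{pxp}: the formula in Theorem \ref{PxP} for $k\ge 6$ is precisely what this paper is completing, so $pp(G(6,6))\le 4$ must be established here. The paper does this in Proposition \ref{66upper} by exhibiting an explicit pairing of ten terminals (eight of them packed into one quadrant) that forces a bottleneck at a single vertex; your proposal contains no such construction and simply declares the bound already known, which is circular.

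Second, and more seriously, your lower-bound plan does not survive its own arithmetic. The corridor/SDR scheme needs, besides the four reserved rows and columns, ``short stubs'' joining each terminal to its corridors, and with eight terminals in six rows and six columns the pigeonhole principle forces shared rows and columns among terminals in essentially \emph{every} configuration, not only the corner-clustered ones; so stub collisions are generic and the Hall argument does not isolate a small exceptional set. The cut-condition heuristic also proves nothing in this setting: for edge-disjoint paths with four commodities in a grid there is no converse to the cut condition, so ``all small cuts have slack'' does not certify routability. Likewise, ``link one pair and recurse'' requires linking three pairs in the residual graph obtained by deleting a path from the grid, and that graph is no longer a grid, so none of your tools apply to it; this is exactly the difficulty the paper's machinery is built to avoid. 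The actual proof organizes the cases by the distribution of terminals over the four $3\times3$ quadrants (the q-diagrams), and for each case combines escape lemmas for crowded and sparse quadrants (Lemmas \ref{heavy78}--\ref{heavy5}, \ref{exit}--\ref{boundary}), framings on the concentric cycles $C_0$ and $C_1$ to let pairs communicate across quadrants (Lemmas \ref{frame}--\ref{Caforpq}), and the weak $2$-linkedness and $3$-path-pairability of various subgrids (Lemmas \ref{w2linked}, \ref{3pp}). The resulting case analysis (A.1--A.4.3 and B.1--B.4.2) is long even after the dihedral symmetry reduction, so the ``short, explicitly checkable list'' you anticipate substantially underestimates the work; without the escape and framing lemmas, or some equivalent, the clustered cases cannot be dispatched by hand-weaving alone.
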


In Section \ref{proof} the proof of Theorem \ref{main} is given in two parts. In Proposition \ref{66upper} we present a pairing of ten terminals which does not give a linkage
in $G(6,6)$. 
To  show that $G(6,6)$ is $4$-path-pairable Proposition \ref{66} uses a sequence of technical lemmas. These lemmas are listed next
in Section \ref{lemmas}, and they are proved separately in two notes, \cite{heavy} and \cite{escape}.

\section{Technical lemmas}
\label{lemmas}
Let $T=\{s_1,t_1,s_2,t_2,s_3,t_3,$ $s_4,t_4\}$ be the set of eight distinct vertices of the grid $G=P_6\Box P_6$, called {\it terminals}. The set $T$ is partitioned into
four terminal pairs,  $\pi_i=\{s_i,t_i\}$, $1\leq i\leq 4$, to be linked in $G$ by edge disjoint paths. A (weak) {\it linkage} for $\pi_i$,  $1\leq i\leq 4$, means a set of edge disjoint $s_i,t_i$-paths $P_i\subset G$.

The grid $G$ partitions
into four $P_3\Box P_3$ grids called {\it quadrants}.
We say that a set of  terminals in a quadrant $Q\subset G$ {\it escape} from $Q$
 if there are pairwise edge disjoint `mating paths' from the terminals into distinct mates (exits) located at the union of a horizontal  and a vertical boundary line of $Q$.
 A quadrant $Q\subset G$  is considered to be `crowded', if it contains $5$ or more terminals.
  Among the technical lemmas the proof of three lemmas pertaining to crowded quadrants was presented in \cite{heavy}. The technical lemmas for `sparse' quadrants containing at most  $4$ terminals are proved in \cite{escape}. 

  \subsection{Escaping from crowded quadrants}
  Let $A$ be a horizontal and  let $B$ be a vertical boundary line
of a quadrant $Q\subset G$, and for  a subgraph $S\subseteq G$ set $\|S\|=|T\cap S|$.

 \begin{lemma}
\label{heavy78}
 If $\|Q\|=7$ or $8$,
then there is
 a linkage for two or more pairs in $Q$, and there exist edge disjoint escape  paths for the unlinked terminals into distinct 
 exit vertices in $A\cup B$.\qed
\end{lemma}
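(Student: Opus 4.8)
The plan is to fix coordinates on the $3\times 3$ quadrant $Q$, writing its vertices as $(i,j)$ with $1\le i,j\le 3$, and to place the corner where $A$ and $B$ meet at $(3,3)$; thus $A=\{(3,1),(3,2),(3,3)\}$, $B=\{(1,3),(2,3),(3,3)\}$, and the exit set $A\cup B$ is an $L$-shape of five vertices. The reflection of $Q$ across the main diagonal through $(3,3)$ preserves $A\cup B$, so every configuration may be normalized by this symmetry, roughly halving the number of cases. I first record the distribution of terminals forced by $\|Q\|$: since the whole grid carries only the four pairs, $\|Q\|=8$ means all four pairs lie entirely in $Q$ with a single vertex empty, while $\|Q\|=7$ means three complete pairs together with one lone terminal and two empty vertices.

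The key reduction is that escaping is free for terminals already sitting on $A\cup B$: such a terminal escapes by the length-$0$ path to itself, using no edges, and distinctness of exits is never an issue since five are available for at most four escaping terminals. Hence the only vertices requiring genuine routing are those of the far block $F=\{(1,1),(1,2),(2,1),(2,2)\}$, which holds at most four terminals. Linking a pair inside $Q$ likewise removes both of its terminals from the escape problem. The strategy is therefore to spend the internal linkages on the terminals that are hardest to evacuate --- those lying deep in $F$, especially the opposite corner $(1,1)$ of degree $2$ --- and to route the remaining terminals to distinct exits by short paths, keeping everything edge-disjoint inside the twelve edges of $Q$.

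I would then carry out the construction by a case analysis organized first on the positions of the empty vertex (for $\|Q\|=8$) or empty pair of vertices (for $\|Q\|=7$), and then on how the pairing meets $F$. In each case I exhibit two (or, when convenient, more) internal $s_i,t_i$-paths and the escape paths explicitly. To control edge-disjointness I route the chosen internal linkages through the interior --- the center $(2,2)$ and the far corner $(1,1)$ --- while sending the escapes outward toward the $L$ along the remaining boundary edges; paths of length at most two suffice, and a crude edge count (at most two linkage paths of length $\le 2$ together with at most four escapes, against twelve available edges) shows the edge budget is never the obstruction.

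The main difficulty is concentrated in the congested configurations in which all four vertices of $F$ are terminals belonging to four \emph{distinct} pairs, with their partners on the $L$. Here no pair can be linked within $F$, so all four far terminals must be evacuated simultaneously through the few edges joining $F$ to $A\cup B$, and at the same time two internal linkages must be found among the partner terminals lying on the $L$. Verifying that an edge-disjoint family of paths exists in every such case --- and that the diagonal symmetry reduces these to a short list --- is the heart of the argument, and it is exactly the checking handled in \cite{heavy}.
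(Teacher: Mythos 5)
The paper does not prove this lemma in the text you were given: it is stated with the proof deferred entirely to the companion note \cite{heavy}, so there is no internal argument to compare against. Judged on its own terms, your proposal has the same status --- it is a framework, not a proof. The normalization by the diagonal symmetry through $(3,3)$, the observation that a terminal already on $A\cup B$ escapes by a trivial path, and the resulting focus on the far $2\times 2$ block $F$ are all sensible reductions, but the lemma's content is precisely the exhaustive verification that in \emph{every} admissible configuration one can simultaneously link two pairs and evacuate the leftovers edge-disjointly, and you explicitly hand that verification back to \cite{heavy} rather than carrying it out. That is the gap: the ``heart of the argument'' is announced but never executed.

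Two of the justifications you do offer would not survive the execution. First, the ``crude edge count'' assumes internal linkage paths of length at most $2$; a pair with its terminals at opposite corners of the $3\times 3$ quadrant needs a path of length $4$, so two linkages can already consume eight of the twelve edges and the budget is genuinely tight, not ``never the obstruction.'' Second, the claim that distinctness of exits ``is never an issue since five are available for at most four escaping terminals'' is a pure counting statement that ignores routing: a terminal in $F$ may be forced, by the edges already spent on the linkages, toward the same exit that another unlinked terminal occupies, and resolving such conflicts is part of the case analysis. Finally, your description of the hardest case is muddled: if all four vertices of $F$ carry terminals of distinct pairs with partners on the $L$, then linking two of those pairs (each pair straddling $F$ and the $L$) removes two of the four far terminals from the escape problem, so it is not true that ``all four far terminals must be evacuated simultaneously.'' None of these slips is fatal to the overall plan, but they indicate that the unwritten case analysis is where all the actual difficulty lives.
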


 \begin{lemma}
   \label{heavy6}
If $\|Q\|=6$, then there is a linkage for one or more pairs in $Q$, and there exist edge disjoint escape paths for the unlinked terminals into distinct exit vertices of $A\cup B$ such that  $B\setminus A$ contains at most one exit.\qed
    \end{lemma}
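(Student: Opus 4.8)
The plan is to first pin down the pairing structure forced by $\|Q\|=6$ and then reduce the statement to an edge-disjoint routing problem inside the nine-vertex grid $Q$. Write $c$ for the number of \emph{complete} pairs (both terminals in $Q$) and $h$ for the number of \emph{split} pairs (exactly one terminal in $Q$). Then $2c+h=\|Q\|=6$, while $c+h\le 4$ because there are only four pairs in all; subtracting gives $c\ge 2$. Hence either $c=3,\ h=0$ or $c=2,\ h=2$, and in particular a complete pair always exists, so asking for a linkage of at least one pair inside $Q$ is never vacuous. I place the interior corner $A\cap B$ at the bottom-left of a $3\times 3$ coordinate grid, so that $A$ is the bottom row and $B$ the left column. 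Then $A\cup B$ offers five candidate exits, and the admissible budget is the three vertices of $A$ together with at most one vertex of $B\setminus A$, i.e.\ exactly four usable exits.

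First I would record the tightness that drives the proof. If a single pair is linked, then $6-2=4$ terminals must escape into exactly four admissible exits, so the escape assignment is forced to be a perfect matching onto the three vertices of $A$ plus one vertex of $B\setminus A$. Since $Q$ has only three columns, pigeonhole forces two escapees to share a column, while at most one escapee may leave through $B\setminus A$; this is the pressure the routing must absorb. This suggests a dichotomy: whenever two complete pairs can be linked edge-disjointly inside $Q$ (always available when $c=2$, and in most placements when $c=3$), only $6-4=2$ terminals remain, and two monotone staircase paths to two distinct vertices of $A$ finish the job with the $B\setminus A$ restriction trivially met. I would therefore take the two-pair linkage as the default and retreat to a single-pair linkage only in the residual placements where no second linking path can be kept edge-disjoint from the first.

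The heart of the argument is this single-linkage regime. Here I would link one complete pair by a short interior path chosen to stay within the upper-right $2\times 2$ block whenever the terminals permit, so as to free the bottom row and the boundary edges incident to $A\cup B$; I would then route the four remaining terminals greedily, pushing each along a vertically monotone path to the nearest still-free vertex of $A$ and reserving the one permitted exit of $B\setminus A$ for a left-column terminal that cannot otherwise reach $A$. Edge-disjointness is kept by assigning the escapees to distinct columns as far as possible and resolving the forced column clash through one of the three empty cells.

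The step I expect to be the main obstacle is exactly this single-linkage regime under the restriction that $B\setminus A$ carry at most one exit: with four escapees, three exits on $A$ and only one allowed on $B\setminus A$, certain clusterings of terminals in or beside the left column create genuine congestion, and one must either move the linking pair to liberate a column or promote to a two-pair linkage. Organizing the subcases is made delicate by the fact that $(A,B)$ is an \emph{ordered} pair carrying the constraint only on $B$: the reflection across the main diagonal through $A\cap B$ interchanges $A$ and $B$ and is therefore \emph{not} a symmetry of the statement, so it cannot be used to halve the cases. Instead I would index the finitely many subcases by the occupancy of the left column $B$ and the bottom row $A$ --- equivalently by the placement of the three empty cells --- and in each pattern exhibit one explicit family of edge-disjoint linking and escape paths.
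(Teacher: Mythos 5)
The paper does not actually prove Lemma~\ref{heavy6}: it is stated with a \qed{} and its proof is deferred to the external note \cite{heavy}, so there is no in-paper argument to compare yours against. Judged on its own, your setup is sound --- the count $2c+h=6$, $c+h\le 4$, hence $c\ge 2$ and $(c,h)\in\{(3,0),(2,2)\}$ is correct, as is the observation that the exit budget is exactly four vertices (all of $A$ plus one of $B\setminus A$), that this is tight when only one pair is linked, and that the diagonal reflection is not a symmetry of the statement. But what you have written is a plan, not a proof: every step where the lemma could actually fail is asserted rather than verified.

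Two gaps are concrete. First, your default branch claims that after linking two pairs, ``two monotone staircase paths to two distinct vertices of $A$ finish the job with the $B\setminus A$ restriction trivially met.'' This is not trivial: the two escape paths must be edge-disjoint from the two linkage paths inside a graph with only twelve edges, a careless choice of linkage paths can isolate a remaining terminal (e.g.\ a linkage path passing through a degree-two or degree-three vertex hosting an escapee can consume all of its free edges), and the two remaining terminals may themselves sit in $B\setminus A$, in which case at least one of them must be routed all the way to $A$ and the restriction is anything but automatic. Second, your retreat criterion is miscalibrated. Since $P_3\Box P_3$ is weakly $2$-linked (the paper's Lemma~\ref{w2linked}), two complete pairs can \emph{always} be linked edge-disjointly, so your stated trigger for falling back to a single linkage --- ``no second linking path can be kept edge-disjoint from the first'' --- never fires when $c=2$. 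The genuine fallback condition is that no choice of the two linkage paths leaves room for the escapes, and that is precisely the configuration-by-configuration analysis (together with the entire single-linkage regime, where four escapees must hit all four admissible exits) that your proposal defers to ``finitely many subcases'' without exhibiting any of them. Until those cases are actually routed, the lemma is not proved.
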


  \begin{lemma}
   \label{heavy5}
 If $\|Q\|=5$ and $\{s_1,t_1\}\subset Q$,
then there is
 an $s_1,t_1$-path  $P_1\subset Q$, 
 and the complement of $P_1$ contains edge disjoint escape paths for  the three unlinked terminals into distinct exit vertices of $A\cup B$ such that $B\setminus A$ contains at most one exit.\qed
   \end{lemma}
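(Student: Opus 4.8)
The plan is to put coordinates on the quadrant, recast the escape requirement as an edge-capacitated flow/cut condition, choose the pair-path $P_1$ to be as unobtrusive as possible, and then discharge the cut condition by a structured (not brute-force) case analysis. Write $Q$ as the $3\times 3$ grid with vertices $(i,j)$, $1\le i,j\le 3$. Using the symmetries of the grid I would first assume that the common corner $c=A\cap B$ is $(1,1)$, with $A=\{(1,1),(1,2),(1,3)\}$ the bottom row and $B=\{(1,1),(2,1),(3,1)\}$ the left column. I note that once $A$ and $B$ are fixed essentially no grid symmetry survives, since the diagonal reflection through $c$ interchanges $A$ and $B$ and hence also interchanges the \emph{asymmetric} condition ``at most one exit in $B\setminus A$''; so the corner rule must be tracked carefully rather than symmetrized away.

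Next I would model escape as a flow. Delete the edges of $P_1$ from $Q$, give every remaining edge capacity $1$, adjoin a sink $z$, join each vertex of $A$ to $z$ by a capacity-$1$ arc, and route the two vertices of $B\setminus A$ into $z$ through a single capacity-$1$ bottleneck node, so that at most one of them can ever be used. The three unlinked terminals are unit sources. Crucially, capacities are placed only on edges and not on vertices, reflecting that the escape paths need only be \emph{edge}-disjoint (from one another and from $P_1$) and may freely share vertices. Because all capacities are integral and unit, a flow of value $3$ is exactly a triple of edge-disjoint escape paths ending at distinct admissible exits; by the max-flow--min-cut theorem such a flow exists if and only if every edge cut separating the three sources from $z$ has size at least $3$.

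Since $P_1$ is confined to $Q$, its only job is to join $s_1$ to $t_1$, so I would always take $P_1$ to be a shortest $s_1,t_1$-path and, among shortest paths, one hugging the side of $Q$ \emph{away} from the escape corner $c$ (preferring edges of the top row and right column). The escape conduits are the six ``vertical'' column edges together with the bottom row $A$, and a path kept to the far boundary spends few of them; in most placements this default choice deletes at most one edge from each column, leaving each column and the bottom row available as an independent descent route, which makes the cut condition above immediate. I would then organize the remaining verification by how many terminals lie in the far region (the top row together with the right column) and by the type of the pair $\{s_1,t_1\}$ --- adjacent, at distance two, or antipodal --- since these determine how many column edges $P_1$ is forced to consume.

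I expect the main obstacle to be the genuinely tight configurations in which four of the five terminals crowd the two vertices of $B\setminus A$ and the far corner $(3,3)$, forcing $P_1$ to cross the very column edges the singletons need for their descent. Here the forbidden second exit in $B\setminus A$ can drop the relevant min cut to $2$, and the only remedy is to re-route $P_1$ --- trading a top-row edge for a column edge, or conversely --- so that it frees a descent while still linking $s_1$ to $t_1$. The heart of the argument is therefore a short list of such tight placements, for each of which I would exhibit a specific $P_1$ together with three explicit escape paths certifying that a value-$3$ flow survives; the flow/cut reformulation then guarantees that every configuration \emph{not} on this short list is handled automatically by the default boundary-hugging $P_1$.
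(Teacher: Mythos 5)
The paper never proves this lemma in the text you were given: Lemma~\ref{heavy5} is one of the three ``crowded quadrant'' lemmas whose proofs are deferred to the companion manuscript \cite{heavy}, so there is no in-paper argument to measure yours against. Judged on its own terms, your reduction is a reasonable and correct \emph{encoding}: deleting $E(P_1)$, placing unit capacities on the surviving edges of the $3\times 3$ quadrant, attaching each vertex of $A$ to the sink by a unit arc, and funnelling the two vertices of $B\setminus A$ through a single unit-capacity gadget does capture exactly ``edge-disjoint escape paths, distinct exits, at most one exit in $B\setminus A$,'' and integrality gives you back genuine paths. You are also right to flag that the $A$/$B$ asymmetry kills the diagonal symmetry and must be tracked.

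There are, however, two genuine gaps. First, your max-flow--min-cut criterion is misstated: with three distinct unit sources, requiring only that every cut separating \emph{all three} sources from $z$ have capacity at least $3$ is necessary but not sufficient (a configuration in which one terminal is completely cut off while the other two have ample capacity satisfies your stated condition yet admits no flow of value $3$). The correct condition, via a unit-arc supersource, is that for \emph{every} subset $S$ of the three sources, every cut separating $S$ from $z$ has capacity at least $|S|$; in this lemma the binding cases are precisely the small subsets, e.g.\ a single unlinked terminal sitting at a degree-two corner both of whose edges you propose to spend on $P_1$, or two terminals trapped in $B\setminus A$ behind the capacity-one gadget. Second, and more seriously, the proof is not actually carried out: the entire content of the lemma lives in the ``short list of tight placements,'' which you neither enumerate nor resolve, and the assertion that the default boundary-hugging shortest $P_1$ settles every configuration off that list is exactly the claim that needs verification (indeed, in some placements the correct move is the opposite one --- routing $P_1$ along the near boundary $A\cup B$ so as to leave the interior column edges free, as when the three unlinked terminals occupy $(1,2)$, $(1,3)$, $(2,3)$ in your coordinates). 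As written this is a sound plan for a proof, with a fixable error in the cut criterion, but not yet a proof.
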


    \subsection{Escaping from sparse quadrants}
 The vertices of a grid are represented as elements $(i,j)$ of a matrix arranged 
in rows $A(i)$ and columns $B(j)$. W.l.o.g. we may assume that $Q$ is the upper left quadrant of $G=P_6\Box P_6$, and thus
   $A=A(3)\cap Q$ and $B=B(3)\cap Q$ are the horizontal and vertical boundary lines of $Q$, respectively.
   
    For a vertex set $S\subset V(G)$ and a subgraph $H\subseteq G$, $H-S$ is interpreted as the subgraph obtained by the removal of $S$ and the incident edges from $H$;  $S$ is also interpreted as the subgraph of $G$ induced by $S$; $x\in H$ simply means a vertex of $H$.  Mating (or shifting) a terminal $w$  to vertex $w^\prime$, called a mate of $w$, means specifying a   $w,w^\prime$-path called a {\it mating path}.     
    
    Finding a linkage for two pairs are facilitated using the property of a graph being {\it weakly $2$-linked},
and by introducing the concept of a {\it frame}. 

A graph $H$ is weakly $2$-linked, if for every $u_1,v_1,u_2,v_2\in H$, not necessarily distinct vertices, there exist edge disjoint $u_i,v_i$-paths in $H$, for $i=1,2$. A weakly $2$-linked graph must be $2$-connected, but $2$-connectivity is not a sufficient condition.
 The next lemma lists a few weakly $2$-linked subgrids (the simple proofs are omitted). 
 \begin{lemma}
\label{w2linked}
The grid $P_3\Box P_k$,
and the subgrid of
$P_k\Box P_k$ induced by 
$(A(1)\cup A(2)) \cup$ $ (B(1)\cup B(2))$ is weakly $2$-linked, for $k\geq 3$.
 \qed
\end{lemma}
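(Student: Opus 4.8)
The plan is to prove Lemma~\ref{w2linked} by establishing two structural facts about the listed grids and invoking a standard reduction for weak $2$-linkage. Recall that a graph is weakly $2$-linked precisely when it admits edge-disjoint paths joining any two prescribed (not necessarily distinct) pairs of vertices. A convenient sufficient condition is that the graph be $2$-edge-connected and, after contracting or routing one path, the residual graph remain connected enough to carry the second path. For these small, highly symmetric subgrids, however, the cleanest route is a direct case analysis exploiting their planar, near-grid structure.

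\medskip

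First I would treat the ladder-type grid $P_3\Box P_k$. Label its vertices $(i,j)$ with $1\le i\le 3$ and $1\le j\le k$, so the three horizontal paths (rows) are joined by vertical rungs in every column. Given the two demand pairs $\{u_1,v_1\}$ and $\{u_2,v_2\}$, the key observation is that $P_3\Box P_k$ contains three internally-disjoint ``horizontal'' routes (the top, middle, and bottom rows) linked by the rungs, so one can always route the first pair along a monotone staircase path $Q_1$ and then verify that $G-E(Q_1)$ still contains a $u_2,v_2$-path. I would organize the argument by the cyclic/positional relationship of the four terminals (which row each occupies, and their left-to-right order), using the symmetries of the ladder (horizontal flip, top-bottom flip) to collapse the cases to a manageable few. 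The essential point is that removing a single staircase path from a $3\times k$ grid cannot disconnect any two remaining vertices, because each column retains at least one untouched rung and each row retains a long untouched stretch; this is the content one checks.

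\medskip

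Second I would handle the ``L-shaped'' corner subgrid of $P_k\Box P_k$ induced by the first two rows together with the first two columns, namely $(A(1)\cup A(2))\cup(B(1)\cup B(2))$. This subgraph is again a union of two width-$2$ ladders (a horizontal one of length $k$ and a vertical one of length $k$) sharing the $2\times 2$ corner block, so it is itself essentially a long width-$2$ strip bent at a right angle. For such width-$2$ strips one argues exactly as above: a width-$2$ ladder $P_2\Box P_k$ is weakly $2$-linked because it is a cycle-like structure with many parallel rungs, and joining two such strips at a corner preserves the property since the corner block is $2$-connected and offers two independent transfer routes between the horizontal and vertical arms. Again I would reduce to a few representative placements of the four terminals (both in one arm; split between arms; one or more in the shared corner) and route accordingly.

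\medskip

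I expect the main obstacle to be the bookkeeping in the width-$2$ cases rather than any deep idea: a width-$2$ ladder has no ``spare'' row, so after routing the first path one must be careful that the second path is not forced to reuse an edge of the first. The delicate configurations are those where the two demand pairs interleave along the strip (an $u_1 u_2 v_1 v_2$ ordering), since a naive direct route for each pair would share rung edges. The resolution is to push one path onto the top row and the other onto the bottom row over the overlap region, swapping rows at untouched rungs; verifying that enough disjoint rungs are available (which holds whenever $k\ge 3$, guaranteeing at least three columns and hence room to separate) closes the argument. Because the statement asserts these grids are weakly $2$-linked for all $k\ge3$ and the paper itself flags the proofs as simple and omitted, I would present only the representative routings and the disjoint-rung count, leaving the symmetric cases to the reader.
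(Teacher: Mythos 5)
The paper offers no proof of Lemma~\ref{w2linked} to compare against: the lemma is stated with the remark that ``the simple proofs are omitted,'' so your plan --- a direct case analysis that routes the first pair and checks that the residual graph still carries the second --- is exactly the elementary verification the authors have in mind, and there is no divergence of approach to report.

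Two of your supporting claims, however, are false as literally stated and would need repair once the cases are actually written out. First, it is not true that removing a monotone staircase path from $P_3\Box P_k$ ``cannot disconnect any two remaining vertices'': a staircase that turns at a degree-two corner, say entering $(1,1)$ from $(1,2)$ and leaving toward $(2,1)$, uses both edges at that corner and isolates it, so if $u_2$ or $v_2$ sits there the first path must be rerouted; likewise a staircase that climbs from row $1$ to row $3$ within a single column consumes both rungs of that column, contradicting your ``each column retains an untouched rung.'' The statement is salvageable --- one chooses $Q_1$ as a function of all four terminals --- but that choice \emph{is} the content of the proof, not an afterthought. Second, the claim that a width-$2$ ladder is weakly $2$-linked ``because it is a cycle-like structure with many parallel rungs'' fails in the base case: $P_2\Box P_2=C_4$ is \emph{not} weakly $2$-linked (the two pairs of opposite vertices cannot be joined by edge-disjoint paths), and $C_4$ is precisely the corner block through which both paths may be forced to pass in the L-shaped subgrid. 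So the gluing step cannot rest on the corner block being ``$2$-connected with two independent transfer routes''; you must exhibit, for each entry/exit pattern, two edge-disjoint crossing routes through the block (they exist, e.g.\ one path along $(1,2)$--$(1,1)$--$(2,1)$ and the other passing only through $(2,2)$) and check that the arms can deliver the terminals to the matching attachment edges. With those two points made explicit, the plan goes through.
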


We use the $3$-path-pairability
of certain grids proved in \cite{pxp} (see in Theorem \ref{PxP}).
 \begin{lemma}
\label{3pp}
The grid $P_4\Box P_k$,
 is $3$-path-pairable, for $k\geq 4$.
 \qed
\end{lemma}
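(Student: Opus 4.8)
The plan is to derive this statement directly from Theorem \ref{PxP}, rather than to construct linkages by hand. First I would observe that $P_4\Box P_k = G(4,k)$ in the notation of Theorem \ref{PxP}, so the relevant parameter there is $k'=\min\{4,k\}$. Since the hypothesis of the lemma is $k\geq 4$, I have $k'=\min\{4,k\}=4$, which places us squarely in the first case of the piecewise formula. Theorem \ref{PxP} then yields $pp(G(4,k))=k'-1=4-1=3$.

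The second (and final) step is to unwind the definition of the path-pairability number. By definition $pp(G)$ is the largest $m$ for which $G$ is $m$-path-pairable; in particular $pp(G)\geq 3$ already guarantees that $G$ is $3$-path-pairable. Hence for every $k\geq 4$ the grid $P_4\Box P_k$ is $3$-path-pairable, as claimed. There is essentially no obstacle here: the content of the lemma is precisely the $k'=4$ row of Theorem \ref{PxP}, and the only thing to check is that the hypothesis $k\geq 4$ forces the minimum of the two side lengths to equal $4$ (so that we land in the first case and not in the $k'=3$ case, which would give only weak $2$-pairability). Were one to avoid citing Theorem \ref{PxP}, the genuine work would shift into \cite{pxp}, where the three terminal pairs are routed through the four-row ``backbone'' of the grid; but given the theorem this reduction is immediate and no such routing is needed here.
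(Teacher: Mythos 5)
Your proof is correct and matches the paper's own treatment: the paper states this lemma with an immediate \qed, justified by the preceding remark that it is the $\min\{a,b\}=4$ case of Theorem \ref{PxP} (proved in \cite{pxp}), exactly the reduction you carry out. The only nitpick is your parenthetical that the $k'=3$ case "would give only weak $2$-pairability" — Theorem \ref{PxP} gives $pp=2$ there, i.e.\ genuine $2$-path-pairability, not the weaker notion; but this aside does not affect your argument.
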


 		 Let $C\subset G$ be a cycle and let $x$ be a fixed vertex of $C$. Take two edge disjoint paths from a member
  of $\pi_j$ to $x$, for $j=1$ and $2$, not using edges of $C$. Then we say that
  the subgraph of the union of $C$ and the two paths to $x$ define a {\it frame} $[C,x]$, for $\pi_1,\pi_2$. A frame
  $[C,x]$, for $\pi_1,\pi_2$,  helps find a linkage
  for the pairs $\pi_1$ and $\pi_2$; in fact, it is enough to mate the other members of the terminal pairs onto $C$  using mating paths edge disjoint from $[C,x]$ and each other.

 The concept of a frame facilitates `communication' between quadrants of $G$. For this purpose frames in $G$ can be built on two standard cycles $C_0, C_1\subset G$ as follows.
  
 Let $C_0$ be the innermost $4$-cycle of $G$  induced by $(A(3)\cup A(4))\cap (B(3)\cup B(4))$, and let $C_1$ be the $12$-cycle around $C_0$ induced by the neighbors of $C_0$.
 Given a quadrant $Q$ we usually set $x_0=Q\cap C_0$ and we denote by $x_1$ the middle vertex of the path $Q\cap C_1$.  (For instance, in the upper right quadrant of $G$, $x_0=(3,4)$ and $x_1=(2,5)$.)

  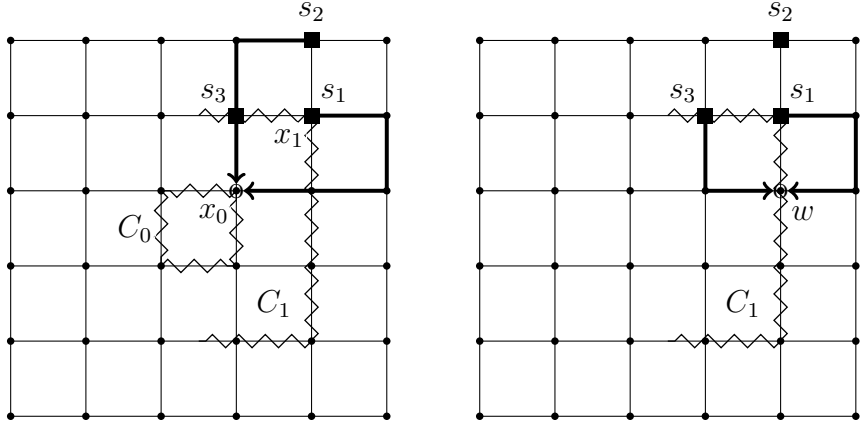
\begin{figure}[htp]
  \begin{center}
      \tikzstyle{T} = [rectangle, minimum width=.1pt, fill, inner sep=3pt]
 \tikzstyle{V} = [circle, minimum width=1pt, fill, inner sep=1pt]
\tikzstyle{B} = [rectangle, draw=black!, minimum width=4pt, inner sep=4pt]
\tikzstyle{txt}  = [circle, minimum width=1pt, draw=white, inner sep=0pt]
\tikzstyle{Wedge} = [draw,line width=1.5pt,-,black!100]

\begin{tikzpicture}
 \foreach \x in {0,1,2,3,4,5} 
    \foreach \y in {0,1,2,3,4,5} 
      { \node[V] () at (\x,\y) {};}
      \foreach \u in {0,1,2,3,4} 
    \foreach \v in {1,2,3,4,5} 
   { \draw (\u,\v) -- (\u+1,\v); \draw (\u,\v-1) -- (\u,\v); } 
  \draw (0,0) -- (5,0) -- (5,5);	
     \draw[line width=.5pt,snake=zigzag]  (2,2) -- (3,2) -- (3,3) -- (2,3) -- (2,2);
      \draw[Wedge,->](4,4)--(5,4) -- (5,3) -- (3.1,3);
     \draw[Wedge,->] (4,5) -- (3,5) -- (3,3.1)  ;
\draw[line width=.5pt,snake=zigzag]  (2.5,4) -- (4,4) -- (4,1) -- (2.5,1);
     
     \node[T](s1) at (4,4){};
      \node[txt] () at   (4.3,4.3){$s_1$}; 
         \node[T,label=above:$s_2$]() at (4,5){};
           \node[T]() at (3,4){};
           \node[txt] () at   (2.7,4.3){$s_3$}; 
           \node[]() at (3,3){o}; 
       \node[txt] () at   (2.7,2.7){$x_0$}; 
          \node[txt] () at   (3.7,3.7){$x_1$}; 
        \node[txt] () at   (1.65,2.5){$C_0$};  
           \node[txt] () at   (3.5,1.5){$C_1$};        
\end{tikzpicture}
\hskip1truecm
\begin{tikzpicture}
 \foreach \x in {0,1,2,3,4,5} 
    \foreach \y in {0,1,2,3,4,5} 
      { \node[V] () at (\x,\y) {};}
      \foreach \u in {0,1,2,3,4} 
    \foreach \v in {1,2,3,4,5} 
   { \draw (\u,\v) -- (\u+1,\v); \draw (\u,\v-1) -- (\u,\v); } 
  \draw (0,0) -- (5,0) -- (5,5);	
 
      \draw[Wedge,->](4,4) -- (5,4) -- (5,3) -- (4.1,3);
     \draw[Wedge,->]  (3,4) -- (3,3) -- (3.9,3) ;
     \draw[line width=.5pt,snake=zigzag]  (2.5,4) -- (4,4) -- (4,1) -- (2.5,1);
     
        \node[T](s1) at (4,4){};
      \node[txt] () at   (4.3,4.3){$s_1$}; 
      \node[T,label=above:$s_2$]() at (4,5){}; 
         \node[T]() at (3,4){};
           \node[txt] () at   (2.7,4.3){$s_3$}; 
          \node[]() at (4,3){o};  
      \node[txt] () at   (4.3,2.7){$w$};  
       \node[txt] () at   (3.5,1.5){$C_1$};  
\end{tikzpicture}
\end{center}
\caption{Framing $[C_0,x_0]$ for $\pi_1,\pi_2$, and $[C_1,w]$, for $\pi_1,\pi_3$}
\label{frames}
		\end{figure}

 Let  $\alpha\in\{0,1\}$ be fixed, assume that there are two terminals in a quadrant $Q$ belonging to distinct pairs, say $s_1\in\pi_1$, $s_2\in \pi_2$,  and  let $w\in Q\cap C_\alpha$. We say that $[C_\alpha,w]$ is a {\it framing  in $Q$ for $\pi_1,\pi_2$ to $C_\alpha$}, if 
 there exist edge disjoint mating paths in $Q$
 from $s_1$ and from $s_2$ to $w$,  edge disjoint from $C_1$
 (see examples in Fig.\ref{frames} for framing in the upper right quadrant).
 
    \begin{lemma}
 \label{frame}  
 Let  $s_1\in\pi_1,s_2\in\pi_2$ be two (not necessarily distinct) terminals/mates  in a quadrant $Q$.
 
 (i) For any mapping $\gamma:\{s_1,s_2\}\longrightarrow\{C_0,C_1\}$,
 there exist edge disjoint  mating paths in $Q$ from $s_j$ to vertex $s_j^\prime\in\gamma(s_j)$, $j=1,2$, not using edges of $C_1$.
 
 (ii)  For any fixed $\alpha\in\{0,1\}$, there is a framing 
 $[C_\alpha,x_\alpha]$, 
 for $\pi_1, \pi_2$, where $x_\alpha\in C_\alpha\cap Q$ and the mating paths are in $Q$. 
  \end{lemma}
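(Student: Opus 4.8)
The plan is to prove Lemma \ref{frame} by explicitly constructing the mating paths inside the $3\times 3$ quadrant $Q$, working in the upper left quadrant (by symmetry) so that $C_0$ meets $Q$ at $x_0=(3,3)$ and $C_1$ meets $Q$ along the path with middle vertex $x_1=(2,3)$ or its symmetric counterpart. Since $Q=P_3\Box P_3$ has only nine vertices, the argument is a finite case check, but the key is to organize the cases so that the constructions are uniform rather than exhaustive.

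For part (i), I would first observe that the constraint ``not using edges of $C_1$'' only removes from $Q$ the single edge of $Q$ lying on $C_1$ (the edge joining the two vertices of $Q\cap C_1$), since $C_1$ otherwise lies outside $Q$; call the resulting graph $Q'$. The targets $C_0\cap Q=\{x_0\}$ and $C_1\cap Q$ are the corner $(3,3)$ and the two adjacent boundary vertices $(2,3),(3,2)$. I would handle the map $\gamma$ by cases according to how many of $s_1,s_2$ are sent to $C_0$ versus $C_1$. When both target $C_1$, each terminal routes to its own vertex of $Q\cap C_1$, and the two routes can be kept edge disjoint because $Q'$ is richly connected away from the deleted edge; when one or both target the corner $x_0$, I would route one terminal straight to $x_0$ and peel the other off along a boundary-disjoint path. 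The main tool is that $Q=P_3\Box P_3$ is weakly $2$-linked by Lemma \ref{w2linked}, so any two source-target demands admit edge disjoint paths; the only extra care needed is the forbidden $C_1$-edge, which I would show never forces a conflict because at least one of the alternate routes to each target avoids it.

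For part (ii), I would simply specialize part (i). Taking $\gamma(s_1)=\gamma(s_2)=C_\alpha$ and choosing the target vertex to be $x_\alpha=C_\alpha\cap Q$ (for $\alpha=0$) or the middle vertex $x_1$ of the path $C_1\cap Q$ (for $\alpha=1$), part (i) yields edge disjoint mating paths from $s_1$ and $s_2$ to $x_\alpha$ avoiding the edges of $C_1$. By the definition of a frame, these two paths to the common vertex $x_\alpha$, together with $C_\alpha$, constitute exactly the framing $[C_\alpha,x_\alpha]$ for $\pi_1,\pi_2$, as illustrated in Figure \ref{frames}.

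I expect the main obstacle to be the subcase of part (i) in which both terminals must reach the \emph{same} single vertex $x_0$ on $C_0$ while remaining edge disjoint and avoiding the $C_1$-edge. Weak $2$-linkage guarantees edge disjoint paths to distinct targets, but a common endpoint is more delicate: the two paths share the vertex $x_0$ and must split before reaching it. I would resolve this by sending the two terminals to $x_0$ along the two edge disjoint sides of a $4$-cycle of $Q$ through $x_0$, which is possible since every vertex of $Q$ other than $x_0$ lies on such a cycle, and then verifying that the forbidden edge can be avoided by the orientation of that cycle. Once this subcase is dispatched, the remaining configurations follow directly from Lemma \ref{w2linked}.
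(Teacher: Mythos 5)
The paper defers the proof of this lemma to the companion note \cite{escape}, so there is no in-paper argument to compare yours against; I am judging the plan on its own terms. Your overall strategy --- a finite case analysis inside the $3\times 3$ quadrant, organized by where $\gamma$ sends the two terminals, with special attention to the common-target case --- is the right shape, and the lemma is true. However, your description of the geometry is wrong in a way that undermines the key step. Taking $Q$ to be the upper left quadrant, $C_1$ is the $12$-cycle bounding the central $4\times 4$ block, so $Q\cap C_1$ is the three-vertex path $(2,3)-(2,2)-(3,2)$ whose middle vertex is $x_1=(2,2)$, not $(2,3)$; consequently \emph{two} edges of $C_1$ lie inside $Q$, namely $(2,2)-(2,3)$ and $(2,2)-(3,2)$, not one, and the ``edge joining the two vertices of $Q\cap C_1$'' that you propose to delete, $(2,3)-(3,2)$, is not an edge of the grid at all. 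After deleting the two genuine $C_1$-edges, the vertex $x_1=(2,2)$ retains only the two incident edges $(2,2)-(1,2)$ and $(2,2)-(2,1)$, so it --- not $x_0$ --- is the most constrained target, and your assertion that ``at least one of the alternate routes to each target avoids'' the forbidden edge is made against the wrong forbidden set.

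This matters most for part (ii) with $\alpha=1$. A framing $[C_1,x_1]$ requires both mating paths to terminate at the single vertex $x_1$, but your part (i) --- as you yourself set it up --- routes the two terminals to \emph{distinct} vertices of $Q\cap C_1$, so (ii) does not ``simply specialize'' (i): the convergence-on-$x_1$ case is exactly the delicate common-endpoint situation you flag for $x_0$, and it is tighter, since the two paths are forced to enter $x_1$ through the only two permitted edges $(1,2)-(2,2)$ and $(2,1)-(2,2)$. Relatedly, Lemma \ref{w2linked} cannot be invoked as a black box: weak $2$-linkage of $P_3\Box P_3$ produces edge-disjoint paths with no control over which edges they use, whereas you need the corresponding property of $Q$ with the two $C_1$-edges removed (or an explicit routing that avoids them). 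None of this is fatal --- the required routings exist and can be exhibited by a short check over the possible positions of $s_1,s_2$ --- but as written the argument rests on an incorrect picture of $C_1\cap Q$ and on a reduction of (ii) to (i) that does not go through for $\alpha=1$.
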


\begin{lemma}
\label{12toCa} Let $s_p,s_q,s_r$  be distinct terminals in a quadrant $Q$ belonging to three distinct pairs. Then there is a framing in $Q$ for $\pi_p,\pi_q$ to $C_\alpha$, for some $\alpha\in\{0,1\}$, and there is an edge disjoint mating path in $Q$ from $s_r$ to $C_\beta$, 
where $\beta=\alpha+1\pmod 2$, and edge disjoint from $C_1$.
\end{lemma}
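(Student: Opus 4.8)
The plan is to work inside the single quadrant $Q$, a copy of $P_3\Box P_3$. Orienting $Q$ as the upper right quadrant (as in Fig.~\ref{frames}), the trace $C_0\cap Q$ is the single vertex $x_0$, the inner corner of $Q$, while $C_1\cap Q$ is a three-vertex path with middle vertex $x_1$ whose two edges are precisely the edges of $C_1$ inside $Q$. Since every mating path must avoid $C_1$, I would delete these two edges and work in the resulting subgrid $Q'$; a framing together with an escape in $Q$ is then exactly three pairwise edge disjoint paths in $Q'$. Note that in $Q'$ the inner corner $x_0$ has degree $2$, and that $Q'$ admits the reflection $\sigma$ across the diagonal through $x_0$, which fixes $x_0$ and $x_1$ and interchanges the two ends of $C_1\cap Q$; I would use $\sigma$ to cut the number of placements to be examined. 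Recall that framing $\pi_p,\pi_q$ to $C_\alpha$ means two edge disjoint paths from $s_p$ and from $s_q$ into one common vertex $w\in C_\alpha\cap Q$, and that we must additionally route $s_r$ into $C_\beta\cap Q$.

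First I would dispose of the configurations with $s_r$ already on a cycle. If $s_r\in C_0\cup C_1$, pick $\beta$ with $s_r\in C_\beta$, so the escape path for $s_r$ is trivial; Lemma \ref{frame}(ii) then frames $\pi_p,\pi_q$ to the opposite cycle $C_\alpha$ by paths avoiding $C_1$, and since the escape path is empty the three paths are automatically edge disjoint. Thus I may assume that $s_r$ avoids $C_0\cup C_1$, and by $\sigma$ that $s_r$ is one of the three $\sigma$-orbit representatives among the five off-cycle vertices.

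For these positions the governing idea is to escape $s_r$ to the cycle it is nearer to, which here is always $C_1$ (reachable in one or two steps), and to frame $\pi_p,\pi_q$ to the opposite cycle $C_0$, whose trace in $Q$ is just $x_0$. With the one or two escape edges of $s_r$ reserved, framing the pair into $x_0$ becomes a pair of edge disjoint paths into $x_0$; since $x_0$ has degree $2$ they must use both edges at $x_0$, so the framing reduces to reaching the two ends of $C_1\cap Q$ edge disjointly, which in the remaining (nearly tree-like) subgrid is settled by comparing where $s_p$ and $s_q$ attach to the backbone through $x_0$. When this assignment clashes I would switch $\alpha$: frame $\pi_p,\pi_q$ to a common vertex $w\in C_1\cap Q$ (taking $w=s_p$ if $s_p$ already lies on $C_1$) and escape $s_r$ to $x_0$ instead. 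For each representative position of $s_r$ the choice between the two values of $\alpha$ is dictated by whether $\{s_p,s_q\}$ meets the inner region around $x_0$, and one of the two always succeeds.

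The main obstacle is exactly this bottleneck at $x_0$. Because $x_0$ has degree $2$ and the two edges of $C_1\cap Q$ are forbidden, framing two paths into $x_0$ forces both edges at $x_0$, and each $C_1$-end adjacent to $x_0$ can then be entered from only one direction; when $s_r$ is an outer corner its escape wants one of these scarce edges, so the naive assignment escaping $s_r$ to $C_1$ and framing the pair to $x_0$ can fail. Resolving this is what forces the choice of $\alpha$ (and of the common framing vertex and the escape target) to depend on the placement of $s_p,s_q$ relative to $x_0$. Verifying that one value of $\alpha$ always works is a finite edge-disjointness (Menger/flow) check in $Q'$, kept short by the reflection $\sigma$ and by the trivial-escape reduction above.
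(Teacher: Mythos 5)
Your reduction is sound as far as it goes: all three required paths must live in the subgraph $Q'$ obtained from $Q$ by deleting the two edges of $C_1\cap Q$, and it is worth recording that $Q'$ is exactly a theta graph (the $8$-cycle on the boundary of $Q$ together with $x_1$ joined to the two vertices of $Q$ adjacent to the corner of $G$), so the edge budget is even tighter than ``nearly tree-like'' suggests. The genuine gap is the final step: the claim that for every placement one of the two values of $\alpha$ succeeds is asserted rather than checked, and it is false. Take $Q=NW$, so $x_0=(3,3)$, $C_1\cap Q=\{(2,2),(2,3),(3,2)\}$, and the deleted edges are $(2,2)-(2,3)$ and $(2,2)-(3,2)$; put the three terminals on the first column, $s_p=(3,1)$, $s_q=(2,1)$, $s_r=(1,1)$. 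The cut separating $B(1)\cap Q$ from the rest of $Q'$ has exactly three edges, so each of the three paths crosses it exactly once. For $\alpha=0$: since $(3,2)$ and $(2,3)$ have degree two in $Q'$, any nontrivial path ending at $x_0$ must end with $(3,1)-(3,2)-(3,3)$ or with $(1,3)-(2,3)-(3,3)$; the first segment must belong to $s_p$'s own path (otherwise both edges at $s_p$ are consumed), and then $s_q$'s path from $(2,1)$ must reach $(1,3)$ through $(1,2)$, entering $(1,2)$ either from $(1,1)$ --- which consumes both edges at $s_r$ --- or from $(2,2)$ --- after which $s_r$ is confined to the dead ends $(1,2)$ and $(2,1)$, neither of which is in $C_1$. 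For $\alpha=1$: the path of $s_r$ to $x_0$ must likewise end with one of those two segments; the first traps $s_p$, and either routing of the second leaves at most one edge-disjoint access to each of $(2,2)$, $(2,3)$, $(3,2)$, so no common framing vertex $w$ for $s_p$ and $s_q$ exists. Both options fail.

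Consequently the proof cannot be completed along the proposed lines; indeed, with the roles of $p,q,r$ prescribed in advance (which is how the lemma is invoked, e.g.\ in case (VII) of Proposition \ref{66}), the statement itself fails for this configuration. Note that the paper does not prove Lemma \ref{12toCa}; the proof is deferred to the companion note \cite{escape}, so there is no in-paper argument to compare against, and the offending pattern is precisely the exceptional set $T_1$ of Fig.\ref{except} (three terminals filling $B(1)\cap NW$) that the authors repeatedly single out elsewhere. For this terminal set the conclusion does hold if one may choose which terminal plays the role of $s_r$: take $s_r=(3,1)$, escape it along $(3,1)-(3,2)-(3,3)$ to $x_0$, and frame the other two to $(2,2)\in C_1$ via $(2,1)-(2,2)$ and $(1,1)-(1,2)-(2,2)$. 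So a correct treatment must either weaken the statement to ``for some two of the three terminals,'' as in Lemma \ref{Caforpq}, or isolate and handle these boundary-line configurations separately; the dichotomy on $\alpha$ alone cannot do it.
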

\begin{lemma}
\label{Caforpq}
 Let $s_1,s_2,s_3$  be distinct  terminals in a quadrant $Q$ (belonging to distinct pairs); 
let $y_0\in Q$ be a corner vertex of $Q$ with degree three in $G$, and 
let  $z\in \{x_0,y_0\}$ be a fixed corner vertex of $Q$.  Then,

(i)  for some $1\leq p<q\leq 3$, there is a framing in $Q$ for $\pi_p,\pi_q$ to $C_0$,
 and there is an edge disjoint mating path in $Q$ from the third terminal to $C_1$;
 
(ii)  for some $1\leq p<q\leq 3$, there is a framing in $Q$ for $\pi_p,\pi_q$ to $C_1$,
 and there is an edge disjoint mating path in $Q$ from the third terminal to $z$;
\end{lemma}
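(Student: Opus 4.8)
Throughout I take $Q$ to be the upper left quadrant, so $x_0=(3,3)$, $x_1=(2,2)$, $C_0\cap Q=\{x_0\}$, and $C_1\cap Q=\{(2,2),(2,3),(3,2)\}$; the two edges of $C_1$ lying in $Q$ are $(2,2)(2,3)$ and $(2,2)(3,2)$, and $y_0\in\{(1,3),(3,1)\}$ is one of the two degree-three corners. The decisive first step is to record the graph available to mating paths, namely $Q$ with the two $C_1$-edges deleted. A direct check shows this graph $Q'$ is the $8$-cycle $(3,3),(2,3),(1,3),(1,2),(1,1),(2,1),(3,1),(3,2)$ (returning to $(3,3)$) together with the center $x_1=(2,2)$, which is joined only to $(1,2)$ and $(2,1)$. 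In $Q'$ both $x_0$ and $x_1$ have degree two: the two \emph{ports} of $x_0$ are $(2,3)$ and $(3,2)$, and $x_1$ is reached only through its two pendant edges to $(1,2)$ and $(2,1)$. Every assertion of the lemma is a statement about edge-disjoint routing in this small graph, and the proof is a case analysis organized by the cyclic positions of the three terminals, using the threefold freedom to choose which pair to frame, which port/arc to use, and which mate $w$ to use.

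For part (i) I would linearly order the three terminals along the path $\Pi=Q'-x_0$, the $7$-vertex path from $(2,3)$ to $(3,2)$, with $x_1$ regarded as hanging off the interior vertices $(1,2)$ and $(2,1)$. Because $\deg_{Q'}x_0=2$, framing a pair to $x_0$ must use \emph{both} ports, so I route the terminal nearest $(2,3)$ outward to that port and the terminal nearest $(3,2)$ outward to the other; these two paths occupy the two end-portions of $\Pi$ and are automatically edge-disjoint. The remaining (middle) terminal I send to $x_1\in C_1\cap Q$: from its position strictly between the extremes it reaches either $(1,2)$ or $(2,1)$ through the interior portion of $\Pi$ and then uses the corresponding pendant edge, which no framing path can touch. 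The only boundary cases are a terminal sitting at $x_0$ (place it in the framed pair, which frees a port and makes the routing easier) and a terminal sitting at $x_1$ (take it as the third, as it already lies on $C_1$). I have checked the tightest clustered configurations and the interior/pendant edges always remain free, so part (i) goes through cleanly.

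For part (ii) I would first invoke the diagonal reflection $(i,j)\mapsto(j,i)$, which fixes $x_0,x_1$ and interchanges $(1,3)\leftrightarrow(3,1)$, to assume $y_0=(3,1)$, and then split on $z=x_0$ and $z=y_0$. In each subcase I route the third terminal to $z$ through one of its two ports (for $x_0$ these are $(2,3),(3,2)$; for $g=(3,1)$ they are $(2,1)$ and $(3,2)$) and frame the remaining two to a common mate $w\in C_1\cap Q$, taking $w=x_1$ as the default since its two pendant entries through $(1,2)$ and $(2,1)$ give the framed pair two independent approaches. The key is to choose \emph{which} terminal is sent to $z$ so that the other two lie on opposite sides of that path, letting one reach $x_1$ via $(1,2)$ and the other via $(2,1)$ along edge-disjoint arcs; when two of the three terminals are already close together I instead frame them to the nearby endpoint $w\in\{(2,3),(3,2)\}$, which costs almost no edges. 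A terminal sitting at $z$ makes the third path trivial.

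The main obstacle is exactly this congestion in part (ii): when, after fixing the third terminal and its port, the two terminals to be framed both lie on the same side of that path, the two pendant edges at $x_1$ and the ports of $z$ must be allotted so that no edge is reused near the center. I expect to dispose of this by a short, symmetry-reduced enumeration of the cyclic arrangements of the three terminals on $Q'$, in each case exhibiting a choice of framed pair, port, arc, and mate $w$ that separates the three paths; the arguments sketched above for the representative clustered positions indicate that such a choice always exists, so the remaining work is bookkeeping rather than a new idea.
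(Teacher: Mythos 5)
First, a point of reference: this paper does not actually prove Lemma~\ref{Caforpq}; like the other ``sparse quadrant'' lemmas it is stated without proof and deferred to the unpublished note \cite{escape}. So there is no in-paper argument to measure you against, and your proposal has to stand on its own. Your reduction is correct and is the right first move: with $Q$ the upper-left quadrant, the graph available to all mating paths is $Q$ minus the two $C_1$-edges $(2,2)(2,3)$ and $(2,2)(3,2)$, which is the boundary $8$-cycle plus the centre $(2,2)$ attached only at $(1,2)$ and $(2,1)$ -- equivalently a theta graph with branch vertices $(1,2)$ and $(2,1)$. Your proof of part~(i) is essentially complete: since $x_0=(3,3)$ has only the two ports $(2,3)$ and $(3,2)$ in this graph, the two framed terminals must (up to the degenerate placements you note) enter through distinct ports, and ordering the terminals along the $7$-path $Q'-x_0$ and sending the middle one to $x_1$ through $(1,2)$ or $(2,1)$ does always leave the needed interior edge and pendant edge free; I checked the extremal orderings and found no failure.

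Part~(ii), however, is a plan rather than a proof: you explicitly defer the enumeration, and that enumeration is where all the content of the lemma lives. Two concrete pinch points that your write-up gestures at but does not resolve. (1) When $z=x_0=(3,3)$, the framing vertex is essentially forced to be $w=x_1=(2,2)$: each of $(2,3)$ and $(3,2)$ has only two edges in $Q'$, one of which is incident with $(3,3)$ and hence competes with the path to $z$, so two edge-disjoint framing paths into such a $w$ exist only in degenerate placements. Your ``frame them to the nearby endpoint $w\in\{(2,3),(3,2)\}$'' fallback must therefore be justified case by case, not assumed. (2) The choice of which terminal plays the role of ``third'' is genuinely global, not greedy: for terminals at $(2,3)$, $(3,2)$, $(1,1)$ with $z=(3,3)$, routing $(1,1)$ to $z$ necessarily passes through $(2,3)$ or $(3,2)$ and uses up both edges of that degree-two vertex, stranding it; the configuration is only saved by instead sending $(2,3)$ to $z$ and framing $\{(3,2),(1,1)\}$ at $w=(3,2)$. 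Similarly, for terminals at $(1,2)$, $(2,1)$, $(2,2)$ with $z=(3,1)$, sending $(2,2)$ to $z$ fails for every choice of $w$, while sending $(1,2)$ the long way around succeeds. In every configuration I tested such a choice exists, so I believe the lemma and your strategy; but until the symmetry-reduced enumeration over the placements in the nine-vertex graph is actually written out (with the choice of framed pair, of $w$, and of the port into $z$ exhibited in each case), part~(ii) remains unproved. Also note the typo ``$g=(3,1)$'' for ``$y_0=(3,1)$''.
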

\begin{lemma}
\label{exit}
Let $A$  be a boundary line of a quadrant $Q\subset G$. Let $Q_0$ be the subgraph obtained by removing the edges of $A$ from $Q$, and
let $Q_i$, $1\leq i\leq 4$, be one of the subgraphs in Fig.\ref{Qmin} obtained from $Q_0$ by edge removal and edge contraction.

(i) For any $H=Q_i$, $1\leq i\leq 4$, and for any three distinct terminals of $H$ there exist edge disjoint mating paths in $H$ from the terminals into not necessarily distinct vertices in $A$.
\begin{figure}[htp]
\tikzstyle{W} = [double,line width=.5,-,black!100]
 \tikzstyle{V} = [circle, minimum width=1pt, fill, inner sep=1pt]
 \tikzstyle{T} = [circle, minimum width=1pt, fill, inner sep=1pt] 
  \tikzstyle{A}  = [rectangle, minimum width=1pt, draw=black,fill=white, inner sep=1.4pt]
\begin{center}
\begin{tikzpicture}
\draw[dashed] (0.5,0.5)--(2.3,0.5)--(2.3,0.9)--(.5,0.9)--(.5,0.5);
\foreach \x in {.7,1.4,2.1}\draw(\x,.7)--(\x,2.1);
\foreach \y in {1.4,2.1}\draw(.7,\y)--(2.1,\y);
\foreach \x in {.7,1.4,2.1}\foreach \y in {.7,1.4,2.1}\node[V] () at (\x,\y){};
\foreach \x in {.7,1.4,2.1}\node[A]()at(\x,.7){};
\node() at (.2,.7){A};
\node() at (1.4,0){$Q_0$};
\end{tikzpicture}
\hskip.5cm
\begin{tikzpicture}
\draw (1.4,2.1)--(1.4,1.4)--(2.1,1.4) (.7,1.4)--(1.4,1.4) (.7,.7)--(.7,1.4) (1.4,.7)--(1.4,1.4) (2.1,.7)--(2.1,2.1);
\foreach \x in {.7,1.4,2.1}\foreach \y in {.7,1.4}\node[V] () at (\x,\y){};
\node[T]()at(2.1,2.1){};\node[T]()at(1.4,2.1){};
\foreach \x in {.7,1.4,2.1}\node[A]()at(\x,.7){};
\node() at (1.4,0){$Q_1$};
\end{tikzpicture}
\hskip.5cm
\begin{tikzpicture}
\draw (.7,1.4)--(.7,2.1)--(1.4,1.4) (2.1,1.4)--(.7,1.4) (.7,.7)--(.7,1.4) (1.4,.7)--(1.4,1.4) (2.1,.7)--(2.1,2.1);
\foreach \x in {.7,1.4,2.1}\foreach \y in {.7,1.4}\node[V] () at (\x,\y){};
\foreach \x in {.7,2.1}\node[T] () at (\x,2.1){};
\foreach \x in {.7,1.4,2.1}\node[A]()at(\x,.7){};
\node() at (1.4,0){$Q_2$};
\end{tikzpicture}
\hskip.5cm
\begin{tikzpicture}
\draw (.7,2.1)--(2.1,2.1);
\foreach \x in {.7,1.4,2.1}\draw(\x,.7)--(\x,2.1);
\foreach \x in {1.4,2.1}\foreach \y in {.7,1.4}\node[V] () at (\x,\y){};
\foreach \y in {.7,2.1}\node[V] () at (.7,\y){};
\node[T]()at(1.4,2.1){};
\node[T]()at(2.1,2.1){};
\foreach \x in {.7,1.4,2.1}\node[A]()at(\x,.7){};
\node() at (1.4,0){$Q_3$};
\end{tikzpicture}
\hskip.5cm
\begin{tikzpicture}
\draw (.7,2.1)--(2.1,2.1);
\foreach \x in {.7,1.4,2.1}\draw(\x,.7)--(\x,2.1);
\foreach \x in {.7,2.1}\foreach \y in {.7,1.4}\node[V] () at (\x,\y){};
\foreach \y in {.7,2.1}\node[V] () at (1.4,\y){};
\node[T]()at(.7,2.1){};
\node[T]()at(2.1,2.1){};
\foreach \x in {.7,1.4,2.1}\node[A]()at(\x,.7){};
\node() at (1.4,0){$Q_4$};
\end{tikzpicture}
\caption{Mating into $A$ in adjusted quadrants}
	\label{Qmin}
\end{center}
\end{figure}
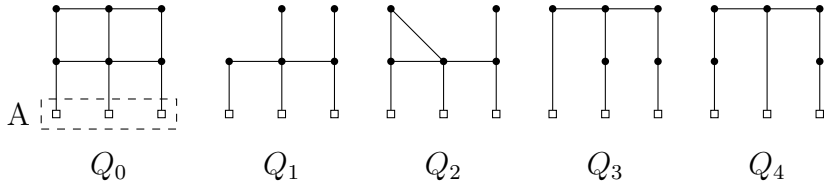

(ii) If  $s_1,t_1,s_2$ are not necessarily distinct terminals in $Q_0$ then there is
 an $s_1,t_1$-path  in $Q_0$ and an edge disjoint mating path from $s_2$ into a vertex of $A$.

(iii) From any three distinct terminals of $Q_0$ there exist pairwise edge disjoint mating paths 
into three distinct vertices of $A$. Furthermore, the claim remains true if two terminals not in $A$ coincide. 
\end{lemma}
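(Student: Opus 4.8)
The plan is to treat all three parts as edge-disjoint routing problems into the fixed boundary line $A$, exploiting that the graphs involved are small and share a rigid column structure. Fix coordinates so that $Q$ is a $3\times 3$ grid with rows $1,2,3$ (top to bottom), $A=\{(3,1),(3,2),(3,3)\}$ the bottom row, and $Q_0=Q$ with the two horizontal edges of $A$ deleted. The decisive structural facts are that in $Q_0$ each exit $(3,j)$ has degree $1$, reached only through the vertical edge $(2,j)(3,j)$; that the three exits lie in three distinct columns; and that the cut separating row $3$ from rows $1,2$ consists of exactly the three vertical edges $(2,j)(3,j)$. Throughout I would use the left--right reflection swapping columns $1$ and $3$ to halve the casework.

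For part (iii) I would model the desired routing as a maximum flow: adjoin a super-source $s^*$ joined to each of the three terminals by a unit-capacity edge and a super-sink $t^*$ joined to each exit by a unit-capacity edge, with all grid edges of capacity $1$. Three edge-disjoint mating paths into three \emph{distinct} exits exist precisely when the minimum $s^*$--$t^*$ cut has value $3$, since an integral flow of value $3$ decomposes into three paths through distinct terminals and distinct exits (every source and sink edge having capacity $1$). By max-flow--min-cut it therefore suffices to check that no vertex set $U$ of $Q_0$ satisfies $|T\setminus U|+|A\cap U|+e(U,\overline U)<3$, where $e(U,\overline U)$ counts the grid edges leaving $U$. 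The binding candidates are the horizontal row cut (the three vertical edges, value $3$) and the vertical column cuts (value $2$); the latter never drop below the threshold because the exits occupy distinct columns, so any region severed by two edges already retains an exit of its own, while the sparsest top-row set still pays three downward edges. The coincidence case, two terminals sharing a vertex $v\notin A$, is the same flow with the edge $s^*v$ given capacity $2$; a short check, using that every non-boundary vertex of $Q_0$ has degree $\ge 2$ and that the third terminal itself contributes to the relevant cut, keeps the bound at $3$.

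Part (i) runs along the same lines, but now the exits need not be distinct, so $t^*$ may be joined to all of $A$ with ample capacity and one only needs three edge-disjoint paths into $A$ with repetitions allowed; the binding quantity becomes the number of edges still incident with $A$ in the contracted graph $H=Q_i$. For each of the four graphs of Figure \ref{Qmin} I would locate the narrow cut created by the edge deletions and contractions and verify, against every placement of three terminals, that its value stays $\ge 3$. For part (ii) I would link $s_1,t_1$ inside the richer top portion of $Q_0$ (the rows $1,2$ induce a $2$-connected $P_3\Box P_2$ carrying all surviving horizontal edges) and send $s_2$ straight down an unused column to its own exit, choosing that column to avoid the linking path; the freedom of three columns against a single escape path makes this possible, and the coincidences $s_1=t_1$, $s_1=s_2$, and the subcases with terminals already on $A$, are disposed of by inspection using the degree-$1$ structure of the exits.

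The main obstacle is part (i): the contracted graphs $Q_i$ are the sparsest instances, and contraction forces several exits to be fed through shared structure, so their bottleneck cuts sit exactly at the threshold $3$. The substance of the proof is the finite but delicate verification that, for each $Q_i$ and each adversarial choice of three terminals behind such a bottleneck, three edge-disjoint routes into $A$ still exist; the reflection symmetry and the max-flow formulation organize this check, but it is where the real work lies. A secondary subtlety is guaranteeing in (ii) that the $s_1,t_1$-linking path can always be chosen to leave one whole column free for the descent of $s_2$.
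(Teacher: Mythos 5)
The paper itself does not prove Lemma~\ref{exit}: it is stated with the other technical lemmas and its proof is deferred to the companion note \cite{escape}, so there is no internal argument to compare yours against. Judged on its own terms, your framework is the right one --- after deleting the edges of $A$ each exit has degree one, the row cut between $A$ and the rest has exactly three edges, and Menger/max-flow with unit source edges (capacity two at a doubled terminal) is precisely the correct way to encode ``edge-disjoint paths from distinct terminals to distinct exits.'' The spot checks I ran against $Q_0$ and the graphs of Fig.~\ref{Qmin} are consistent with your claims, and your plan for (ii) --- route the $s_1,t_1$-path so that a full column remains free for $s_2$ --- also works, including the degenerate cases with terminals on $A$ or coinciding.

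The gap is that the proposal stops at the reduction. You say yourself that the substance of the proof is ``the finite but delicate verification'' that every bottleneck cut in each $Q_i$ stays at value $3$ for every adversarial placement of three terminals, and then you do not perform it: no cut in any $Q_1,\dots,Q_4$ is actually exhibited or bounded, the ``short check'' for the coincidence case in (iii) is asserted rather than done, and the claim in (ii) that the linking path can always avoid an entire column is left as an appeal to ``freedom of three columns'' without handling the cases where $s_1,t_1$ both lie on $A$ and the path is forced through two of the three vertical exit edges. For a lemma whose entire content is a finite case analysis, an argument that names the cases and promises they go through is an outline, not a proof; to complete it you must either enumerate the terminal placements per graph (the reflection symmetry cuts this roughly in half, as you note) or exhibit, for each $Q_i$, the handful of candidate minimum cuts and verify the inequality $|T\setminus U|+|A\cap U|+e(U,\overline U)\ge 3$ for each.
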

\begin{lemma}
\label{heavy4}
Let $A,B$  be a horizontal and 
a vertical boundary line of quadrant $Q$, let $c$ be the corner vertex of $Q$ not in $A\cup B$, and let $b$ be the middle vertex of $B$ (see  $Q_0$ in Fig.\ref{except}). Denote by $Q_0$  the grid obtained by removing the edges of $A$ from $Q$, and let $T$ be a set of  at most four distinct terminals in $Q_0$. 

(i)  If  $T\subset Q_0-A$ and $c\notin T$, 
then for every terminal $s\in T$, there  is a linkage
in $Q_0$  to connect $s$ to $b$, and  there exist edge disjoint mating paths in $Q_0$
from the remaining terminals of $T$ into not necessarily distinct vertices of $A$. 

\begin{figure}[htp]
\tikzstyle{T} = [rectangle, minimum width=.1pt, fill, inner sep=2.5pt]
 \tikzstyle{V} = [circle, minimum width=1pt, fill, inner sep=1pt]
 \tikzstyle{A}  = [circle, minimum width=.5pt, draw=black, inner sep=2pt]
  \tikzstyle{B}  = [rectangle, minimum width=1pt, draw=black,fill=white, inner sep=1.4pt]
\begin{center}
\begin{tikzpicture}
\draw[dashed] (0.5,0.5)--(2.3,0.5)--(2.3,.9)--(.5,.9)--(.5,0.5);
\foreach \y in {1.4,2.1}\draw (.7,\y)--(2.1,\y);
\foreach \x in{.7,1.4,2.1}\draw (\x,.7)--(\x,2.1);
\foreach \x in {.7,1.4,2.1}\foreach \y in {1.4,2.1}\node[V] () at (\x,\y){};
\foreach \x in{.7,1.4,2.1}\node[B] () at (\x,.7){};
\node() at (2.1,2.4){B};
\node() at (.3,1){A};
\node() at (1.4,0){$Q_0$};
\node[B,label=left:$c$] () at(.7,2.1){};
\node[A] () at(2.1,1.4){};
\node() at (2.35,1.4){$b$};

\end{tikzpicture}
\hskip1cm
\begin{tikzpicture}
\foreach \y in {1.4,2.1}\draw (.7,\y)--(2.1,\y);
\foreach \x in{.7,1.4,2.1}\draw (\x,.7)--(\x,2.1);
\foreach \x in {.7,1.4,2.1}\foreach \y in {1.4,2.1}\node[V] () at (\x,\y){};
\foreach \x in{.7,1.4,2.1}\node[B] () at (\x,.7){};
\node[T,label=left:$s_{3}$] () at (.7,.7){};
\node[T,label=left:$s_{2}$] () at (.7,1.4){};
\node[T,label=left:$s_1$] () at (.7,2.1){};
 \node() at (1.4,0){$T_1$};
 \node[A] () at(2.1,1.4){};
 \node() at (2.35,1.4){$b$};
\end{tikzpicture}
\hskip1cm
\begin{tikzpicture}
\foreach \y in {1.4,2.1}\draw (.7,\y)--(2.1,\y);
\foreach \x in{.7,1.4,2.1}\draw (\x,.7)--(\x,2.1);
\foreach \x in {.7,1.4,2.1}\foreach \y in {1.4,2.1}\node[V] () at (\x,\y){};
\foreach \x in{.7,1.4,2.1}\node[B] () at (\x,.7){};
\node[T,label=above:$s_{2}$] () at (2.1,2.1){};
\node[T,label=above:$s_{3}$] () at (1.4,2.1){};
\node[T,label=above:$s_{4}$] () at (.7,2.1){};
\node[T] () at(.7,2.1){};
\node[T] () at(2.1,1.4){};
 \node() at (1.4,0){$T_2$};
 \node() at (2.4,1.4){$s_1$};
\end{tikzpicture}
\caption{Projection to $A$}
	\label{except}
\end{center}
\end{figure}
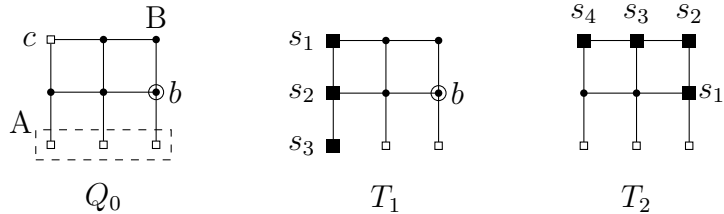
(ii) If $T$ is different from $T_1$ and 
$T_2$  in Fig.\ref{except}, then
for $\min\{3,|T|\}$ choices of a terminal $s\in T$, 
there is a linkage
in $Q_0$  to connect $s$ to $b$, and  there exist edge disjoint mating paths in $Q_0$
from the remaining terminals of $T$ into not necessarily distinct vertices of $A$.

(iii)
If $T$ is one of $T_1$ and 
$T_2$  in Fig.\ref{except}, then the claim  in (ii) above is true only for $s=s_1$ and $s_2$.
\end{lemma}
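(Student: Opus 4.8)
The plan is to treat ``connect $s$ to $b$'' as reserving a single $s,b$-path $P_s\subset Q_0$ and then routing the remaining terminals of $T$ to $A$ inside the residual graph $Q_0-E(P_s)$. Throughout I use matrix coordinates: $A=\{(3,1),(3,2),(3,3)\}$ is the bottom line whose two edges have been deleted, so each exit $(3,j)$ is a pendant hanging below $(2,j)$; the ``core'' is the $2\times 3$ grid $P_2\Box P_3$ on rows $1,2$; and $b=(2,3)$, $c=(1,1)$. The enabling observation is that the graphs $Q_1,\dots,Q_4$ of Fig.~\ref{Qmin} are, up to the indicated edge deletions and contractions, precisely the residual graphs $Q_0-E(P_s)$ arising from the canonical choices of $P_s$; so once $P_s$ is fixed, Lemma~\ref{exit}(i) (three terminals mated to $A$ in any $Q_i$) and Lemma~\ref{exit}(iii) (three terminals mated to distinct exits of $A$ in $Q_0$) will dispatch the at most three remaining terminals.

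For part (i) and the positive direction of (ii) I would proceed by the position of the chosen terminal $s$. Fix a canonical $s,b$-path running along the right column and the top row (routing through the corner $c$ when it is free) and delete its edges. Since no terminal lies on the deleted line $A$ and, in (i), the corner $c$ is empty, the residual graph keeps a full pendant exit in every column together with enough horizontal room, so it is isomorphic to one of $Q_0,Q_1,\dots,Q_4$; applying Lemma~\ref{exit}(i) or (iii) to the remaining terminals then produces edge-disjoint mating paths into $A$, edge-disjoint from $P_s$. A short check over the $O(1)$ positions of $s$ (which column, which of the two core rows) covers every case, and the slack created by the empty corner is exactly what makes \emph{every} choice of $s$ admissible in (i).

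The crux is the impossibility half of (iii), together with the bookkeeping that pins the exceptions down to $T_1$ and $T_2$; here I would use a $3$-edge-cut saturation argument. For $T_2$ set $W=\{(1,3),(2,3)\}$; the only edges leaving $W$ are $(1,2)$--$(1,3)$, $(2,2)$--$(2,3)$ and the pendant $(2,3)$--$(3,3)$, three in total. If $s\in\{(1,2),(1,1)\}$ is sent to $b=(2,3)$, then the reserved path (ending in $W$) and the two mating paths leaving $W$ from the terminals $(2,3)$ and $(1,3)$ together demand all three cut edges; a case split on how $P_s$ enters $W$ shows that in each case one of $(1,3)$, $(1,1)$ has both incident edges consumed and is trapped, so $s=s_3,s_4$ fail while $s=s_1,s_2$ succeed by the explicit routings above. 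For $T_1$ the bottleneck is the degree-one exit: choosing $s=s_3=(3,1)$ forces the edge $(3,1)$--$(2,1)$, after which only two edges remain at $(2,1)$, and either continuation boxes in the two left-column terminals $(1,1)$ and $(2,1)$ (they double-book $(1,1)$--$(1,2)$, or both edges at $(1,1)$ are used); hence only $s=s_1,s_2$ survive.

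Finally, for the count in (ii) I would argue that a terminal $s$ fails only when its forced routing saturates such a small cut, and that for any $T$ other than $T_1,T_2$ this occurs for at most $\max\{0,|T|-3\}$ terminals; combined with the positive routings this yields at least $\min\{3,|T|\}$ admissible choices. I expect the exhaustive verification that no configuration outside $T_1,T_2$ produces an extra trapped terminal to be the most delicate step, but the smallness of $Q_0$ (nine vertices, ten edges) keeps the case analysis finite and checkable.
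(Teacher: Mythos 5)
First, a point of reference: the paper does not actually prove Lemma \ref{heavy4} --- it is one of the technical lemmas whose proofs are deferred to the companion note \cite{escape} --- so there is no in-paper argument to measure yours against, and your proposal must stand on its own. Its overall shape is reasonable (reserve an $s,b$-path, route the rest to $A$ in the residual graph, use small edge cuts for the impossibility claims), and your cut arguments for the negative half of (iii) are essentially correct: the $2$-edge cut around the left column does kill $s=s_3$ in $T_1$, and the saturation analysis at the right column does kill $s_3,s_4$ in $T_2$. But two essential pieces are missing. The reduction of (i) and of the positive half of (ii) to Lemma \ref{exit} via a \emph{canonical} $s,b$-path does not go through. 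Take $T=\{(1,2),(1,3),(2,2),(2,3)\}$ (all of $T$ in $Q_0-A$ and $c\notin T$, so part (i) applies and \emph{every} $s$ must work) with $s=(1,2)$. Your top-row route $(1,2)-(1,3)-(2,3)$ uses both edges at the terminal $(1,3)$ and traps it; your corner route $(1,2)-(1,1)-(2,1)-(2,2)-(2,3)$ leaves $(1,3)$ with no escape once $(2,2)$ and $(2,3)$ are mated. The only routing that works is $P_s=(1,2)-(2,2)-(2,3)$ combined with the long detour $(1,3)-(1,2)-(1,1)-(2,1)-(3,1)$, so both the choice of $P_s$ and the mating paths must depend on the whole configuration $T$, not merely on the position of $s$; moreover the residual graph $Q_0-E(P_s)$ here is not one of $Q_1,\dots,Q_4$ (for the top-row choice it even isolates the terminal $(1,3)$), and Lemma \ref{exit}(iii) is stated for $Q_0$ itself, not for edge-deleted residuals. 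The asserted correspondence between canonical paths and the graphs of Fig.\ref{Qmin} is never verified and is false as stated.

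Second, the quantitative claim in (ii) --- that every $T$ other than $T_1,T_2$ admits at least $\min\{3,|T|\}$ good choices of $s$ --- is the real content of the lemma, and your proposal explicitly defers it (``I expect the exhaustive verification\dots to be the most delicate step''). A priori a terminal can be trapped by the saturation of several different small cuts, and establishing that $T_1$ and $T_2$ are the \emph{only} configurations with more than $|T|-3$ bad choices is precisely the finite but nontrivial case analysis you postpone. As written, the proposal is a plausible plan together with a correct proof of the easy negative half of (iii), not a proof of the lemma.
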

\begin{lemma}
\label{boundary}
Let $A,B$  be a horizontal and 
a vertical boundary line of a quadrant $Q$.
For every $s_1,t_1,s_2,s_3\in Q$  and $\psi: \{s_2,s_3\} \longrightarrow \{A,B\}$,
there is a linkage for $\pi_1$, and  there exist edge disjoint mating paths in $Q$
from $s_j$, $j=2,3$, to distinct vertices  $s_j^*\in \psi(s_j)$.
\end{lemma}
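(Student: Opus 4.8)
The plan is to work inside the $3\times3$ grid $Q=P_3\Box P_3$, which by Lemma~\ref{w2linked} is weakly $2$-linked, and to exploit the elementary fact that a path built only from vertical edges and a path built only from horizontal edges are automatically edge disjoint. Fix the usual coordinates so that $A=\{(3,1),(3,2),(3,3)\}$ is the bottom row, $B=\{(1,3),(2,3),(3,3)\}$ is the right column, and $A\cup B$ is an $L$-shaped path whose only common vertex is the corner $x_0=(3,3)$. Two canonical matings are then available: a terminal $(i,j)$ reaches $A$ by the \emph{descent} down column $j$ (vertical edges only), and reaches $B$ by the \emph{shift} along row $i$ (horizontal edges only). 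Before the main argument I would dispose of the degenerate inputs: if $s_2$ (or $s_3$) already lies on $\psi(s_2)$ its mating path is trivial, leaving only the two demands $(s_1,t_1)$ and the remaining mating, which Lemma~\ref{w2linked} links directly after a suitable choice of exit; coincidences among $s_1,t_1,s_2,s_3$ are absorbed the same way.

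The proof then splits on $\psi$. In the \emph{mixed} case, say $\psi(s_2)=A$ and $\psi(s_3)=B$, route $s_2$ to $A$ by descent and $s_3$ to $B$ by shift; these two mating paths use disjoint edge types and are therefore edge disjoint, and their exits $(3,c_2)$ and $(r_3,3)$ are distinct unless both equal $x_0$. It remains to find an $s_1,t_1$-path in the complement of the (one partial column)$\,\cup\,$(one partial row) that the matings occupy: when this complement is comfortably connected I would produce $P_1$ directly, and otherwise I would instead feed the pair $(s_1,t_1)$ together with one of the two matings into the weak $2$-linkage of Lemma~\ref{w2linked} and route the second mating as a pure-type path around it. In the \emph{same-line} case, say $\psi(s_2)=\psi(s_3)=A$, both terminals descend; if they sit in distinct columns the descents are edge disjoint with distinct exits and $P_1$ is routed in the complement exactly as above, and the transpose handles $\psi(s_2)=\psi(s_3)=B$. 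This case is close in spirit to Lemma~\ref{exit}(ii)--(iii) and Lemma~\ref{heavy4}, and where convenient I would quote those mating constructions rather than rebuild them.

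The main obstacle is the handful of \emph{clustered or colliding} configurations that the canonical matings do not resolve cleanly: (a) a descent and a shift that both target the corner $x_0$; (b) isolation of the outer corner $(1,1)$ when a full descent of column $1$ together with a full shift of row $1$ strips away both of its edges, which is fatal only if $s_1$ or $t_1$ sits there; and (c) two same-line terminals forced into the same column (or row) and hence the same exit. Each of these is a finite, explicit situation on the fixed $9$-vertex grid, and each is cured by a local repair---sliding one exit by a single vertex along its boundary line, or deforming $P_1$ past the offending edge---after which edge disjointness and distinctness of the exits are immediate. The bookkeeping for these repairs is the analogue here of the exceptional sets $T_1,T_2$ in Lemma~\ref{heavy4}, and verifying that the repaired $s_1,t_1$-path still exists reduces, in every case, to the weak $2$-linkedness of $P_3\Box P_3$ guaranteed by Lemma~\ref{w2linked}.
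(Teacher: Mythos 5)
A preliminary remark: the paper itself contains no proof of Lemma~\ref{boundary}; like the other sparse-quadrant lemmas it is stated here and deferred to the companion note \cite{escape}, so there is no in-paper argument to compare yours against. Judged on its own terms, your plan --- canonical vertical descents to $A$ and horizontal shifts to $B$, a case split on $\psi$, and local repairs for the clashing configurations --- is a reasonable way to attack a statement about a fixed $9$-vertex grid, but as written it has a genuine gap at exactly the point where the lemma has content.

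The gap is in the fallback you invoke whenever the canonical matings obstruct $P_1$. You propose to feed the pair $(s_1,t_1)$ together with one of the two matings into the weak $2$-linkage of Lemma~\ref{w2linked} and then route the second mating ``as a pure-type path around it.'' Weak $2$-linkedness produces two edge-disjoint paths with no control over which edges they occupy, so nothing guarantees that a row or column remains free for the third path, nor that the two exits come out distinct; the lemma asks for three pairwise edge-disjoint paths plus a distinctness condition, which is strictly more than Lemma~\ref{w2linked} delivers. Here is a concrete configuration that your canonical matings do not survive and that your list of obstacles omits: $\psi(s_2)=A$, $\psi(s_3)=B$ with $s_2=(1,2)$ and $s_3=(2,1)$, so that the descent of column $2$ and the shift of row $2$ together saturate all four edges at the centre $(2,2)$; if $s_1=(2,2)$ there is then no $P_1$ at all. (The lemma is still true there --- for instance send $s_2$ to $(3,1)$ through $(1,1)$ and $(2,1)$ instead --- but the repair is a genuine re-routing, not an appeal to $2$-linkage.) More generally, the entire content of the lemma lives in the exceptional configurations you defer to ``local repairs'': until each one is exhibited explicitly, in the way the paper spells out the exceptional sets $T_1,T_2$ of Lemma~\ref{heavy4}, what you have is a plan for a proof rather than a proof.
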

\section{Proof of Theorem \ref{main}}
\label{proof}

 \begin{proposition}
\label{66upper}
The $6\times 6$ grid is not $5$-path-pairable.
 \end{proposition}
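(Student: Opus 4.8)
The plan is to exhibit a single explicit pairing of ten distinct vertices and then prove that it admits no weak linkage. Before choosing the pairing I would record the structural reason the statement is delicate: every edge cut of $G(6,6)$ having at least five vertices on each side has at least five edges (a compact five-vertex corner set already has boundary five, and the complement of any set with both sides large is itself large, so its boundary is again at least five). Since a split pair needs one endpoint on each side, all five pairs crossing a common cut would force both sides to have at least five vertices, whence the cut has at least five edges. Consequently the ordinary cut (Menger) obstruction is at best met with equality and cannot by itself defeat a $5$-pairing. The impossibility must therefore be a genuine congestion phenomenon: a configuration in which every cut has enough capacity, yet the five edge-disjoint paths cannot be realised simultaneously.

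To force such congestion I would exploit the two tight features of the grid, namely $\Delta(G)=4$ and the four corners of degree $2$. The idea is to seat one terminal of four of the pairs deep in the four corners and to pair each of them across the grid, adding a fifth pair placed centrally, with the coordinates chosen so that all five demands are compelled to communicate through one small central separator (for instance the edges leaving the central block $\{(3,3),(3,4),(4,3),(4,4)\}$). The point of using the corners is that a path anchored at a degree-$2$ corner has essentially no freedom to detour around the centre: its only two incident edges commit it immediately, so the boundary of the grid blocks the alternative routes that would otherwise relieve the central congestion. I would present this pairing explicitly (a figure makes it transparent) as five pairs $\{s_i,t_i\}$ of distinct vertices.

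The argument itself would be a joint counting argument assuming, for contradiction, that edge-disjoint paths $P_1,\dots,P_5$ exist. First I would use the corner-degree restriction to pin down how each corner pair enters the interior, thereby committing a fixed set of edges near each corner and showing that the four corner pairs cannot avoid the central separator. This saturates the few central edges and fixes, up to a small number of interleavings, the cyclic order in which the four corner paths cross the centre. Then I would show that in every such interleaving the fifth (central) pair is trapped: the edges it would need around the centre are already used, and a parity count -- each path crosses the separator an odd number of times precisely when its ends lie on opposite sides -- shows that accommodating it would require one more crossing than the separator allows. This contradiction would complete the proof.

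The step I expect to be the main obstacle is this last one. Because no single cut is over-demanded, I cannot finish with one inequality; instead I must rule out every edge-disjoint routing, and the grid offers many alternative paths. Controlling them forces me to combine the central capacity bound with the corner-degree restriction at the same time, which I expect to reduce to a short but delicate case analysis over the possible interleavings of the four corner paths around the centre. Verifying that each case leaves the fifth pair without an edge-disjoint route is the technical heart of the argument, and choosing the pairing's coordinates precisely so that this case analysis closes is where most of the care will go.
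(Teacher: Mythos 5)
There is a genuine gap here, on two levels. First, you never actually produce the pairing: the construction is described only in outline (``coordinates chosen so that \dots''), and you yourself defer the decisive step --- ruling out every interleaving of the corner paths --- as ``the technical heart'' still to be done. As it stands this is a plan, not a proof. Second, and more seriously, the mechanism you propose is undercut by your own opening observation. Since every edge cut separating the ten terminals has capacity at least five, and the central block $\{(3,3),(3,4),(4,3),(4,4)\}$ has eight boundary edges, nothing \emph{compels} a corner-to-corner path to pass anywhere near the centre: a path anchored at a degree-$2$ corner commits only its first edge, after which it can hug the boundary of the grid and reach the opposite corner without ever entering the middle third. So the premise ``the four corner pairs cannot avoid the central separator'' fails, and the congestion you hope to create at the centre never materialises. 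Spreading the terminals to the four corners is essentially the opposite of what is needed.

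The paper's counterexample works on a different principle: it packs eight of the ten terminals into the upper-left $3\times 3$ quadrant (with $t_1$ and $t_5$ arbitrary elsewhere), so that the obstruction is \emph{local} edge congestion rather than a global cut or central-separator bound. Four of the paths must each leave the upper-left $2\times 2$ square, which has exactly four boundary edges, so each boundary edge is used exactly once; following the forced routings then overloads a single vertex of degree four, where two paths must pass through and a third terminal must depart, requiring five edge-ends where only four edges exist. If you want to repair your argument, the lesson is that the contradiction must come from a vertex- or small-square-level counting of edge-ends (each degree-$4$ vertex supports at most two edge-disjoint transits), which forces the terminals to be clustered, not dispersed to the corners.
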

 \begin{proof}
Eight terminals are located in the upper left quadrant of $G=P_6\Box P_6$ as shown in Fig.\ref{cluster}; $t_1$ and $t_5$ are be placed anywhere in $G$. We claim that there is no linkage for $\pi_i$, $1\leq i\leq 5$. Assume on the contrary that there are pairwise edge disjoint $s_i,t_i$-paths, for $1\leq i\leq 5$. Then
 $P_1, P_2,P_3, $ and $P_5$ must leave the upper left $2\times 2$ square.
   \begin{figure}[htp]
  \begin{center}
  \tikzstyle{V} = [circle, minimum width=1pt, fill, inner sep=1pt]
\tikzstyle{T} = [rectangle, minimum width=.1pt, fill, inner sep=3pt]
\tikzstyle{B} = [rectangle, draw=black!, minimum width=1pt, fill=white, inner sep=1pt]
\tikzstyle{txt}  = [circle, minimum width=1pt, draw=white, inner sep=0pt]
\tikzstyle{Wedge} = [draw,line width=2.2pt,-,black!100]
\tikzstyle{M} = [circle, draw=black!, minimum width=1pt, inner sep=3.5pt]
		\begin{tikzpicture}	
\foreach \x in {1,...,3}\draw (\x,0.2)--(\x,3);
\foreach \y in {1,...,3}\draw (1,\y)--(3.8,\y);
\foreach \x in {1,...,3}\foreach \y in {1,...,3}\node[B]()at(\x,\y){};
\foreach \x in {1,2}\foreach \y in {1,2,3}\node[T]()at(\x,\y){};
\foreach \y in {2,3}\node[T]()at(3,\y){};

\draw[->,line width=1pt] (2,2)--(2,1.2);
\draw[->,line width=1pt] (1,1)--(1.8,1);
\draw[->,line width=1pt] (1,2)--(1,1.1);
\draw[->,line width=1pt] (1,1)--(1,.3);

\node[label=above:$s_1$]()at(.6,2.9){};
\node[label=left:$s_2$]()at(1,2){};
\node[label=left:$s_3$]()at(1,1){};
\node[label=right:$s_4$]()at(1.9,.8){};
\node[label=right:$t_4$]()at(2.9,1.8){};

\node[label=above:$t_3$]()at(2,3){};
\node[label=above:$t_2$]()at(3,3){};
\node[label=right:$s_5$]()at(1.9,1.8){};
\node[M]()at(2,1){};

\end{tikzpicture}		
\end{center}
 \caption{Unresolvable pairings}
 \label{cluster}
\end{figure}
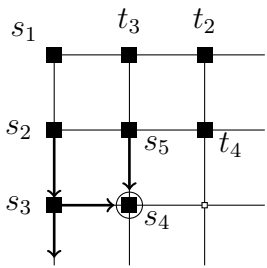

By symmetry, we may assume that $P_5$ starts with the edge $s_5 - (3,2)$, furthermore, either $P_1$ or $P_2$ must use the edge $(2,1) - (3,1)$. Then either $P_3$ or one of 
$P_1$ and $P_2$ uses the edge $(3,1) - (3,2)$. Thus a bottle-neck is formed at 
vertex $(3,2)$, since two paths are entering there and $P_4$ must leave it,  but only two edges, $(3,2) - (3,3)$ and $(3,2) - (4,2)$ are available, a contradiction.
 \end{proof}

\begin{proposition}
\label{66}
The $6\times 6$ grid is $4$-path-pairable.
\end{proposition}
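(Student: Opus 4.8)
The plan is to prove Proposition \ref{66} by a case analysis organized around the quantity $m=\max_Q\|Q\|$, the largest number of terminals in a single quadrant, exploiting throughout the order-eight dihedral symmetry of $G$ to cut down the number of essentially different configurations. The two regimes are the \emph{crowded} one ($m\geq 5$), handled by the escape lemmas of Section \ref{lemmas}, and the \emph{sparse} one ($m\leq 4$), handled by the framing lemmas built on the central cycles $C_0,C_1$. In either regime the guiding principle is the same: confine the work needed to ``resolve'' an overloaded quadrant to edges lying \emph{inside} that quadrant (or to paths reaching only the central cross $A\cup B$), so that the remaining, smaller linkage problem can be finished in a complementary region that is either weakly $2$-linked (Lemma \ref{w2linked}) or $3$-path-pairable (Lemma \ref{3pp}), and hence disjoint from the edges already spent.

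First I would dispose of the crowded regime. If some quadrant $Q$ has $\|Q\|=7$ or $8$, Lemma \ref{heavy78} links at least two pairs inside $Q$ and escapes the at most four remaining terminals to distinct exits on $A\cup B$; since an escaped terminal belongs to a pair whose mate has either escaped as well or lies outside $Q$, at most two pairs remain, and these I would link in a weakly $2$-linked subgraph disjoint from the edges used inside $Q$, furnished by Lemma \ref{w2linked}. The cases $\|Q\|=6$ and $\|Q\|=5$ are analogous, invoking Lemma \ref{heavy6} and Lemma \ref{heavy5} respectively; here the exit-placement restrictions (``at most one exit in $B\setminus A$'') are exactly what guarantees that the complementary completion region avoids the escape paths. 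Note that when $\|Q\|=5$ the five terminals come from only four pairs, so at least one pair lies wholly in $Q$ and Lemma \ref{heavy5} indeed applies; after linking that pair and escaping the other three, the residual pairs (at most three) are completed in a complementary slab that is $3$-path-pairable by Lemma \ref{3pp}.

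The bulk of the argument is the sparse regime $m\leq 4$, which I would further subdivide by the partition of $8$ into the loads $\|Q\|$ of the four quadrants ($4\!+\!4$, $4\!+\!3\!+\!1$, $4\!+\!2\!+\!2$, $\dots$, down to $2\!+\!2\!+\!2\!+\!2$). The strategy is to link any pair having both terminals in one quadrant locally, by Lemma \ref{boundary} or Lemma \ref{heavy4}, and to route each \emph{split} pair to the central cycles: a single terminal is mated onto $C_0$ or $C_1$ by Lemma \ref{frame}(i) or Lemma \ref{exit}, a pair of same-quadrant terminals is framed to a common cycle vertex by Lemma \ref{frame}(ii), Lemma \ref{12toCa}, or Lemma \ref{Caforpq}, and the two nested cycles then supply the edge-disjoint arcs that complete each frame. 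The freedom to direct different terminals to the \emph{different} cycles $C_0$ and $C_1$ (Lemmas \ref{12toCa} and \ref{Caforpq}) is what prevents the routings of distinct pairs from colliding on a single cycle.

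I expect the hard part to be the densest sparse configuration $2\!+\!2\!+\!2\!+\!2$ in which \emph{no} pair lies inside a quadrant, so that all four pairs must be routed through the two-cycle hub simultaneously; here one must carefully assign each quadrant's two terminals to $C_0$ versus $C_1$ and choose the traversal arcs so that the four resulting paths are pairwise edge-disjoint along the shared cycles. More generally, the recurring technical obstacle is the bookkeeping of edge-disjointness between the escape/framing paths and the completion paths; the whole design of Section \ref{lemmas} — escaping only to $A\cup B$, restricting exits on $B\setminus A$, and separating traffic onto $C_0$ and $C_1$ — is precisely what keeps these two families of paths in edge-disjoint parts of $G$, so that the verification in each case reduces to a short, local check.
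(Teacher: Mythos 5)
Your overall architecture is close to the paper's: the crowded regime ($m\geq 5$) is exactly the paper's Case B.1--B.3, and your outline there is essentially right (with the caveat that the $3$-path-pairable completion region $G^*\cong P_4\Box P_4$ must also absorb the terminals sitting in the \emph{other} quadrants, which requires Lemma \ref{exit}(iii) and an extension of the escape paths off the boundary lines --- a step your sketch omits but which is routine). The genuine gap is in the sparse regime. You propose to resolve every split pair by routing it to the central cycles $C_0,C_1$ via the framing lemmas, but Lemmas \ref{frame}, \ref{12toCa} and \ref{Caforpq} only process \emph{two or three} terminals per quadrant, and even with three they frame only two of the pairs and merely mate the third terminal. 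When a quadrant contains four terminals belonging to four distinct split pairs (the paper's Case A.4), or two quadrants each contain three (Case A.3), no lemma in Section \ref{lemmas} delivers edge-disjoint matings of all of them onto the two cycles, and one should not expect one to exist: Proposition \ref{66upper} shows precisely that crowding terminals of distinct pairs near a corner creates a bottleneck. The paper abandons the cycle hub in these cases and instead uses Lemma \ref{heavy4} together with a pigeonhole argument for a ``common index'' $\ell$, projections onto the slabs $A(3)\cup A(4)$ and $G-(A(1)\cup A(2))$, and separate ad hoc treatments of the exceptional terminal configurations $T_1$ and $T_2$ of Fig.\ref{except} for which even Lemma \ref{heavy4} fails to provide enough choices. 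None of this machinery appears in your plan, and without it the sparse case does not close.

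Relatedly, you misplace the difficulty: the $2{+}2{+}2{+}2$ distribution you flag as hardest is in fact the \emph{easiest} sparse case --- the paper dispatches it in one paragraph by framing $\pi_1,\pi_2$ to $C_0$ in one quadrant and $\pi_3,\pi_4$ to $C_1$ in another, then mating the remaining single terminals by Lemma \ref{frame}(i); the two nested cycles never conflict because each carries exactly two frames anchored in different quadrants. The real work, and the bulk of the paper's proof, lies in the $4{+}\cdots$ and $3{+}3{+}\cdots$ distributions where your central-cycle strategy has no applicable lemma.
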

\begin{proof}
We partition the grid $G=P_6\Box P_6$
 into four quadrants, 
named NW, NE, SW, SE according to their `orientation'. 
Given the terminal pairs $\pi_i=\{s_i,t_i\}\subset G$, $1\leq i\leq 4$,
a solution consists of pairwise edge disjoint $s_i,t_i$-paths, $P_i$, for $1\leq i\leq 4$, and is referred to as a {\it linkage} for $\pi_i$, $1\leq i\leq 4$.
Our procedure
described in terms of a tedious case analysis is based on the distribution of 
$T=\cup_{i=1}^4\pi_i$ in the four quadrants. The distributions of the terminals with respect to the quadrants are described with a so called {\it q-diagram} $\mathcal{D}$ defined as a (multi)graph with four nodes
labeled with the four
quadrants $Q_1, Q_2,Q_3, Q_4\subset G$, and containing four edges (loops and parallel edges are allowed): for each terminal pair 
$\{s_i,t_i\}\subset T$, $1\leq i\leq 4$, there is an edge
 $Q_aQ_b\in \mathcal{D}$  if and only if  $s_i\in Q_a$, $t_i\in Q_b$.
 
The proof is split into main cases A and B according to whether some quadrant contains a terminal pair, that is the diagram $\mathcal{D}$ is loopless, or  $\mathcal{D}$ contains a loop. \\

\noindent Case A: no quadrant of $G$ contains a terminal pair. Observe that in this case
the maximum degree in the q-diagram is at most $4$.

A.1: 
every quadrant has two terminals (the q-diagram is $2$-regular). There are four essentially different distributions, apart by symmetries of the grid, see in Fig.\ref{2222}. We may assume that $s_1,s_2\in NW$ and $s_3,s_4\in Q$, where $Q=$SE for the leftmost q-diagram and $Q=$NE for the other ones as indicated by the blackened nodes of the q-diagrams.

 \begin{figure}[htp]
  \begin{center}
      \tikzstyle{T} = [rectangle, minimum width=.1pt, fill, inner sep=3pt]
 \tikzstyle{V} = [circle, minimum width=1pt, fill, inner sep=1.5pt]
 \tikzstyle{Q} = [rectangle, draw=black!, minimum width=1pt, fill=white, inner sep=3pt]
\begin{tikzpicture}
\draw(.7,.7)--(.7,1.4)--(1.4,1.4)--(1.4,.7)--(.7,.7);
 \foreach \x in {0.7,1.4} 
    \foreach \y in {0.7,1.4}  {\node[Q] () at (\x,\y) {};}  
     \node[V] () at (.7,1.4) {};  \node[V] () at (1.4,.7) {};
          \node[label=left:{\small 1}]() at (.9,1.05){};
       \node[label=right:{\small 2}]() at (.7,1.6){};
\end{tikzpicture}
\hskip1cm
\begin{tikzpicture}
\draw(.7,1.4)--(1.4,.7)--(1.4,1.4)--(.7,.7)--(.7,1.4);
 \foreach \x in {0.7,1.4} 
    \foreach \y in {0.7,1.4}  { \node[Q] () at (\x,\y) {};}  
        \node[V] () at (.7,1.4) {};  \node[V] () at (1.4,1.4) {};
             \node[label=left:{\small 1}]() at (1,1.1){};
       \node[label=right:{\small 2}]() at (.6,1.25){};
\end{tikzpicture}
\hskip1cm
\begin{tikzpicture}
\draw(.65,.7)--(.65,1.4)(1.35,1.4)--(1.35,.7);
\draw(.75,.7)--(.75,1.4)(1.45,1.4)--(1.45,.7);
 \foreach \x in {0.7,1.4} 
    \foreach \y in {0.7,1.4}  { \node[Q] () at (\x,\y) {};}  
        \node[V] () at (.7,1.4) {};  \node[V] () at (1.4,1.4) {};
             \node[label=left:{\small 1}]() at (.9,1.05){};
       \node[label=right:{\small 2}]() at (.5,1.05){};
\end{tikzpicture}
\hskip1cm
\begin{tikzpicture}
\draw(.65,.7)--(1.35,1.4) (1.35,.7)--(.65,1.4);
\draw(.75,.7)--(1.45,1.4)(1.45,.7)--(.75,1.4);
 \foreach \x in {0.7,1.4} 
    \foreach \y in {0.7,1.4}  { \node[Q] () at (\x,\y) {};}  
        \node[V] () at (.7,1.4) {};  \node[V] () at (1.4,1.4) {};
                     \node[label=left:{\small 1}]() at (1.15,1.1){};
        \node[label=right:{\small 2}]() at (.6,1.3){};
\end{tikzpicture}
\end{center}
\caption{$\|Q\|=2$, for every quadrant}
\label{2222}
		\end{figure}
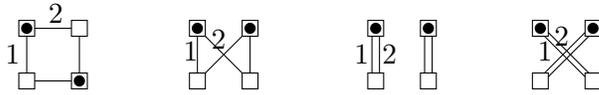
		
For each distributions we apply  Lemma \ref{frame} (ii) to obtain 
a framing in NW for $\pi_1,\pi_2$ to $C_0$ and another framing
in $Q$ for $\pi_3,\pi_4$ to $C_1$. Since the other two quadrants contain two-two terminals,  it is possible to
mate $t_1,t_2$ into vertices of $C_0$ and $t_3,t_4$ into vertices of $C_1$ by using Lemma \ref{frame} (i). Then the linkage is completed along the cycles $C_0$ and $C_1$.\\
 
 A.2:  the maximum degree of the q-diagram is $3$, and there is just one node with maximum degree, let $\|NW\|=3$.
 Fig.\ref{A12cases} lists q-diagrams with this property.
 \begin{figure}[htp]
\begin{center}
\tikzstyle{T} = [rectangle, minimum width=.1pt, fill, inner sep=3pt]
 \tikzstyle{V} = [circle, minimum width=1pt, fill, inner sep=1.5pt]
 \tikzstyle{Q} = [rectangle, draw=black!, minimum width=1pt, fill=white, inner sep=3pt]
\begin{tikzpicture}
\draw(.7,.7)--(.7,1.4)--(1.4,1.4)--(1.4,.7)--(.7,1.4);
 \foreach \x in {0.7,1.4} 
    \foreach \y in {0.7,1.4}  {\node[Q] () at (\x,\y) {};}  
      \node[V] () at (.7,1.4) {};  \node[V] () at (1.4,1.4) {};
      \node[label=left:{\small 1}]() at (.95,1.05){};
      \node[label=right:{\small 2}]() at (.6,1.05){};
      \node[label=above:{\small 3}]() at (1.05,1.2){};
     \node[label=right:{\small 4}]() at (1.15,1.05){};
\end{tikzpicture}
\hskip.6cm
\begin{tikzpicture}
\draw(.7,.7)--(.7,1.4)--(1.4,1.4)--(.7,.7) (.7,1.4)--(1.4,.7);
 \foreach \x in {0.7,1.4} 
    \foreach \y in {0.7,1.4}  {\node[Q] () at (\x,\y) {};}  
      \node[V] () at (.7,1.4) {};  \node[V] () at (1.4,1.4) {};
      \node[label=left:{\small 1}]() at (.95,1.1){};
      \node[label=right:{\small 2}]() at (.55,1.2){};
\end{tikzpicture}
\hskip.8cm
\begin{tikzpicture}
\draw  (.7,1.37)--(1.4,.6) (.7,1.53)--(1.4,.755)
 (.7,.7)--(1.4,1.4)--(.7,1.4);
 \foreach \x in {0.7,1.4} 
    \foreach \y in {0.7,1.4}  { \node[Q] () at (\x,\y) {};}   
       \node[V] () at (.7,1.4) {};  \node[V] () at (1.4,1.4) {}; 
   
         \node[label=left:{\small 1}]() at (1.15,1.1){};
       \node[label=right:{\small 2}]() at (.65,1.2){};  
\end{tikzpicture}
\hskip.6cm
\begin{tikzpicture}
\draw(.65,.7)--(.65,1.4) (.7,1.4)--(1.4,1.4)--(1.4,.7);
\draw (.75,.7)--(.75,1.4);
 \foreach \x in {0.7,1.4} 
    \foreach \y in {0.7,1.4}  { \node[Q] () at (\x,\y) {};}  
      \node[V] () at (.7,1.4) {};  \node[V] () at (1.4,1.4) {};    
           \node[label=left:{\small 1}]() at (.9,1.05){};
           \node[label=right:{\small 2}]() at (.5,1.05){};
\end{tikzpicture}
\hskip.6cm
\begin{tikzpicture}
\draw (.65,.7)--(.65,1.4) (.7,1.4)--(1.4,.7)--(1.4,1.4);
\draw (.75,.7)--(.75,1.4);
 \foreach \x in {0.7,1.4} 
    \foreach \y in {0.7,1.4}  { \node[Q] () at (\x,\y) {};}  
      \node[V] () at (.7,1.4) {};  \node[V] () at (1.4,.7) {};   
           \node[label=left:{\small 1}]() at (.9,1.){};
        \node[label=right:{\small 2}]() at (.5,1.){};
\end{tikzpicture}

\end{center}
\caption{$NW$ has $3$ terminals all other quadrants have less}
\label{A12cases}
\end{figure}
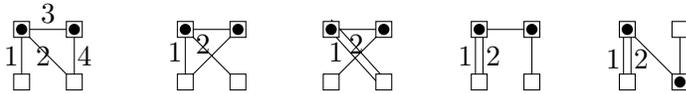
Let 
$s_1,s_2,s_3\in NW$, 
let $Q=$NE for the first four q-diagrams, and let $Q=$SE for the last q-diagram (see blackened nodes in Fig.\ref{A12cases}).
Applying
Lemma \ref{12toCa} with quadrant NW and $p=1, q=2$, we obtain a framing in NW for $\pi_1,\pi_2$ to $C_\alpha$, for some $\alpha\in \{0,1\}$, furthermore, we obtain a mating of $s_3$
into a vertex in $C_\beta\cap NW$, where $\beta=\alpha+1 \pmod 2$. Recall that the remaining quadrants contain at most two terminals. 
We use Lemma \ref{frame} (ii) with quadrant Q which yields a framing
in Q for $\pi_3,\pi_4$ to $C_\beta$.The solution is completed by mating the remaining terminals to the appropriate cycles applying Lemma \ref{frame} (i).\\

A.3: there are two quadrants containing three terminals, 
let 
$\|NW\|=\|Q\|=3$, where $Q=$NE or SE, see Fig.\ref{twomax3}.
 \begin{figure}[htp]
 \begin{center}
\tikzstyle{T} = [rectangle, minimum width=.1pt, fill, inner sep=3pt]
 \tikzstyle{V} = [circle, minimum width=1pt, fill, inner sep=1.5pt]
 \tikzstyle{Q} = [rectangle, draw=black!, minimum width=1pt, fill=white, inner sep=3pt]
 \begin{tikzpicture}
\draw (.7,.7)--(1.4,.7);
\draw (.7,1.33)--(1.4,1.33)(.7,1.4)--(1.4,1.4) (.7,1.47)--(1.4,1.47);
\draw ;
 \foreach \x in {0.7,1.4} 
    \foreach \y in {0.7,1.4}  { \node[Q] () at (\x,\y) {};}    
       \node[label=below:{\small 4}]() at (1.05,1.3){};  
   \node[label=below:{\small (I)}]() at (1.05,.6){};        
\end{tikzpicture}
\hskip.6cm
 \begin{tikzpicture}
\draw  (.7,1.35)--(1.4,1.35) (.7,1.45)--(1.4,1.45)
(1.4,.7)--(1.4,1.4) (.7,.7)--(.7,1.4);
 \foreach \x in {0.7,1.4} 
    \foreach \y in {0.7,1.4}  { \node[Q] () at (\x,\y) {};}   
 \node[label=above:{\small 1}]() at (1.05,1.2){};
                 \node[label=above:{\small 2}]() at (1.05,.85){};
          \node[label=left:{\small 3}]() at (.9,1.05){};
       \node[label=right:{\small 4}]() at (1.15,1.05){};  
        \node[label=below:{\small (II)}]() at (1.05,.7){};      
\end{tikzpicture}
\hskip.6cm
\begin{tikzpicture}
\draw  (.7,1.37)--(1.4,.6) (.7,1.53)--(1.4,.755)
(1.4,.7)--(1.4,1.4) (.7,.7)--(.7,1.4);
 \foreach \x in {0.7,1.4} 
    \foreach \y in {0.7,1.4}  { \node[Q] () at (\x,\y) {};}    
         \node[label=left:{\small 3}]() at (.9,1.05){};
       \node[label=right:{\small 4}]() at (1.15,1.05){};  
        \node[label=below:{\small (III)}]() at (1.05,.6){};     
\end{tikzpicture}
\hskip.6cm
\begin{tikzpicture}
\draw  (.7,1.35)--(1.4,1.35) (.7,1.45)--(1.4,1.45)
(.7,1.4)--(1.4,.7)--(1.4,1.4);
 \foreach \x in {0.7,1.4} 
    \foreach \y in {0.7,1.4}  { \node[Q] () at (\x,\y) {};}   
       \node[label=right:{\small 2}]() at (.7,1.25){};  
            \node[label=above:{\small 1}]() at (1.05,1.2){};  
             \node[label=below:{\small (IV)}]() at (1.05,.6){};  
                   \node[label=right:{\small 4}]() at (1.15,1.05){};    
\end{tikzpicture}
\hskip.6cm
\begin{tikzpicture}
\draw  (.7,1.35)--(1.4,1.35) (.7,1.45)--(1.4,1.45)
(.7,1.4)--(1.4,.7)(1.4,1.4)--(.7,.7);
 \foreach \x in {0.7,1.4} 
    \foreach \y in {0.7,1.4}  { \node[Q] () at (\x,\y) {};}   
       \node[label=right:{\small 2}]() at (.7,1.25){};  
            \node[label=above:{\small 1}]() at (1.05,1.2){};  
             \node[label=below:{\small (V)}]() at (1.05,.6){};     
\end{tikzpicture}
\hskip.6cm
\begin{tikzpicture}
\draw  (.7,1.37)--(1.4,.6) (.7,1.53)--(1.4,.755)
 (1.4,.7)--(1.4,1.4)--(.7,1.4);
 \foreach \x in {0.7,1.4} 
    \foreach \y in {0.7,1.4}  { \node[Q] () at (\x,\y) {};}   
            \node[label=above:{\small 3}]() at (1.05,1.2){};
               \node[label=right:{\small 4}]() at (1.15,1.05){};  
                \node[label=below:{\small (VI)}]() at (1.05,.6){};     
\end{tikzpicture}
\hskip.6cm
\begin{tikzpicture}
\draw  (.65,1.37)--(1.35,.6) (.75,1.53)--(1.45,.755) 
(.72,1.42)--(1.42,.68) (.7,.7)--(1.4,1.4 );
 \foreach \x in {0.7,1.4} 
    \foreach \y in {0.7,1.4}  { \node[Q] () at (\x,\y) {};}   
         \node[label=left:{\small 4}]() at (1.35,.8){};
          \node[label=below:{\small (VII)}]() at (1.05,.6){};     
\end{tikzpicture}

\end{center}
\caption{$\|NW\|=\|Q\|=3$}
\label{twomax3}
\end{figure}
Let $s_1,s_2,s_3\in NW$, and $t_1,t_2\in Q$.

For the q-diagram (I) we define $G^*=G-(A(5)\cup A(6))$. We mate the terminals from row $A(4)$ to $s_4^\prime, t_4^\prime\in A(5)\cup A(6)$ along columns of $G$. 
Since $G^*\cong P_4\Box P_6$ is $3$-path-pairable
by Lemma \ref{3pp},
 there is a linkage for $\pi_1,\pi_2,\pi_3$ in $G^*$. Furthermore,
there is an edge disjoint $s_4^\prime,t_4^\prime$-path in the connected subgrid $A(5)\cup A(6)$ thus completing a solution. \\

For the q-diagrams (II) and (III) let $t_4\in Q$, where  $Q=$NE or SE, respectively.
We apply Lemma  \ref{exit} (iii) for  NW and for $Q$ with horizontal boundary line  $A=A(3)\cap NW$ and in 
$A=A(3)\cap NE$ or $A(4)\cap SE$,
respectively.
 \begin{figure}[htp]
 \begin{center}

\tikzstyle{A}  = [circle, minimum width=.5pt, draw=black, inner sep=2pt]
\tikzstyle{B} = [rectangle, draw=black!, minimum width=1pt, fill=white, inner sep=1pt]
\tikzstyle{T} = [rectangle, minimum width=.1pt, fill, inner sep=2.5pt]
 \tikzstyle{V} = [circle, minimum width=1pt, fill, inner sep=1pt]
\begin{tikzpicture}

\draw[line width=1.5pt] (1,2)--(3,2);
\draw[line width=1.5pt] (.5,2)--(.5,1.5)--(2,1.5)--(2,2);
\draw[->,double,snake](1.5,2)--(1.5,1.1);
\draw[->,double,snake](2.5,2)--(2.5,1.1);

\foreach \y in {.5,1,1.5,2,2.5,3} \draw (.5,\y)--(3,\y);
\foreach \x in {.5,1,1.5,2,2.5,3} \draw (\x,.5)--(\x,3);

  \foreach \x in {.5,1,1.5,2,2.5,3}\node[A]() at (\x,2){};  
   \foreach \x in {.5,1,1.5,2,2.5,3} \foreach \y in {.5,1,1.5,2,2.5,3}
     \node[V]() at (\x,\y){};
      \node() at (4.1,2){$A(3)$};
      \node() at (4.1,1.5){$A(4)$};
 \node() at (0.2,2.25){$s_2^\prime$};  
      \node() at (2.7,2.25){$t_4^\prime$}; 
        \node() at (1.3,2.3){$s_3^\prime$};  
                \node() at (.8,2.3){$s_1^\prime$};    
          \node() at (3.25,2.25){$t_1^\prime$};     
                   \node() at (2.22,2.25){$t_2^\prime$};        
\end{tikzpicture}
\begin{tikzpicture}
\draw[line width=1.5pt] (1,2)--(1,1.5)--(2,1.5);
\draw[line width=1.5pt] (.5,2)--(2.5,2)--(2.5,1.5);
\draw[->,double,snake](1.5,2)--(1.5,1.1);
\draw[->,double,snake](3,1.5)--(3,2.4);
\foreach \y in {.5,1,1.5,2,2.5,3} \draw (.5,\y)--(3,\y);
\foreach \x in {.5,1,1.5,2,2.5,3} \draw (\x,.5)--(\x,3);
   \foreach \x in {.5,1,1.5,2,2.5,3} \foreach \y in {.5,1,1.5,2,2.5,3}
     \node[V]() at (\x,\y){};
      \foreach \x in {.5,1,1.5}\node[A]() at (\x,2){};  
       \foreach \x in {2,2.5,3}\node[A]() at (\x,1.5){};  
     \node() at (2.7,1.3){$t_1^\prime$}; 
      \node() at (0.2,2.25){$s_1^\prime$};  
        \node() at (1.3,2.3){$s_3^\prime$};    
              \node() at (.8,2.3){$s_2^\prime$};   
          \node() at (3.25,1.3){$t_4^\prime$};  
      \node() at (2.2,1.3){$t_2^\prime$};          
\end{tikzpicture}
\end{center}
\caption{Cases (II) and (III)}
\label{A3A4}
\end{figure}
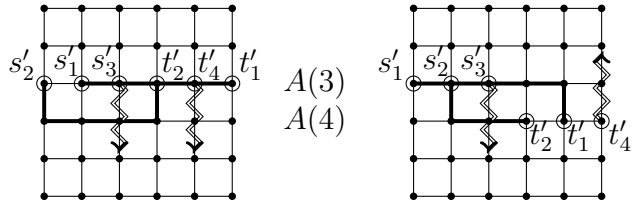
Thus we obtain six distinct mates 
 $s_1^\prime,s_2^\prime,s_3^\prime\in A(3)\cap NW$ and
  $t_1^\prime,t_2^\prime,t_4^\prime\in A(3)\cap NE$ or
   $A(4)\cap SE$, see the encircled vertices in Fig. \ref{A3A4}. 
Observe that the mating paths are edge disjoint from 
the $2\times 6$ grid $G^*=A(3)\cup A(4)$. The mating paths from
$s_3^\prime$ and $t_4^\prime$ can be extended into the neighboring quadrants containing $t_3$ and $s_4$ along the columns of $G$ (zigzag lines in Fig.\ref{A3A4}).
Furthermore, a linkage for $\pi_2$ can be completed by an $s_2^\prime, t_2^\prime$-path in $G^*$ not using edges of
 $A(3)$, and a linkage for $\pi_3$ can be completed by an $s_3^\prime, t_3^\prime$-path in $G^*$ not using edges of  $A(4)$. \\

The solution for the q-diagrams (IV) and (V) follows a similar strategy using Lemma \ref{heavy4}. Assume that as a result of applying Lemma \ref{heavy4} twice, for quadrants NW and NE, we find a common index $\ell\in\{1,2\}$, say $\ell=1$, that satisfies the following property: there exists a path in NW
from $s_1$ to $s_1^\prime=(2,3)$, and there exists a path in NE from $t_1$ to $t_1^\prime=(2,4)$, furthermore, terminals $s_2,s_3\in NW$, $t_2,t_4\in NE$ are mated into not necessarily distinct vertices $s_2^\prime,s_3^\prime,t_2^\prime,t_4^\prime\in A(3)$ using edge disjoint mating paths.
Now we complete a linkage for $\pi_1$ by adding the edge $s_1^\prime t_1^\prime\in A(2)$. Since the mating paths do not use edges of $A(3)$, a linkage for $\pi_2$ can be completed by adding an $s_2^\prime, t_2^\prime$-path in $A(3)$. Next we extend the mating paths from $s_3^\prime$ and $t_4^\prime$ into $s_3^{*},t_4^{*}\in A(4)$ along the columns of $G$. Since $G^*=G-(NW\cup NE)$
contains $t_3,s_3^*,s_4,t_4^{*}$ and since, by Lemma \ref{w2linked}, $G^*\cong P_3\Box P_6$ is weakly $2$-linked,  the linkage for $\pi_3,\pi_4$ can be completed in $G^*$. 

A common index $\ell\in\{1,2\}$ as above exists by the pigeon hole principle if one of the terminal set in NW or in NE is different from type $T_1$ in Fig.\ref{except}.  If the terminals in both quadrants are  of type $T_1$, then we have $s_1,s_2,s_3\in B(1)\cap NW$ and $t_1,t_2,t_4\in B(6)\cap NE$. Now we mate $s_1,s_2,t_1,t_2$ into the $3\times 4$
grid $G^\prime$ induced by $(NW\cup NE)\setminus (B(1)\cup B(6))$ along their rows, furthermore, we mate $s_3,t_4$ to vertices $s_3^*, t_4^*\in A(4)$  along their columns.  Since $G^\prime$ is $2$-path-pairable,  a linkage  for $\pi_1,\pi_2$ can be completed in $G^\prime$. Since the weakly $2$-linked
$G^*=G-(NW\cup NE)$ contains 
$s_3^*,t_3,s_4,t_4^*$, there are edge disjoint
ptahs from $s_3^*$ to $t_3$ and from $s_4$ to $t_4^*$ completeing
a linkage  in $G^*$ for $\pi_3$ and $\pi_4$.\\

 \begin{figure}[htp]
 \begin{center}

\tikzstyle{A}  = [circle, minimum width=.5pt, draw=black, inner sep=2pt]
\tikzstyle{B} = [rectangle, draw=black!, minimum width=1pt, fill=white, inner sep=1pt]
\tikzstyle{T} = [rectangle, minimum width=.1pt, fill, inner sep=2.5pt]
 \tikzstyle{V} = [circle, minimum width=1pt, fill, inner sep=1pt]
\begin{tikzpicture}

\draw[line width=1.5pt] (1.5,3)--(1.5,.5)--(3,.5);
\draw[line width=1.5pt] (1,3)--(1,1.5)--(3,1.5);
\draw[->,double](.5,3)--(1.9,3);
\draw[->,double](3,1)--(3,1.9);

\foreach \y in {.5,1,1.5,2,2.5,3} \draw (.5,\y)--(3,\y);
\foreach \x in {.5,1,1.5,2,2.5,3} \draw (\x,.5)--(\x,3);
 
\foreach \x in {.5,1,1.5,2,2.5,3} \foreach \y in {.5,1,1.5,2,2.5,3}
     \node[V]() at (\x,\y){};

  \foreach \x in {.5,1,1.5}\node[T]() at (\x,3){}; 
    \foreach \y in {.5,1,1.5}\node[T]() at (3,\y){}; 
      \node() at (3.3,1){$t_4$}; 
         \node() at (3.3,1.5){$t_1$}; 
            \node() at (3.3,.5){$t_2$}; 
          \node() at (3.3,2.15){$t_4^\prime$}; 
        \node() at (2.15,3.3){$s_3^\prime$};  
    \node() at (.5,3.3){$s_3$};    
      \node() at (1,3.3){$s_2$};    
        \node() at (1.5,3.3){$s_1$};    
\end{tikzpicture}
\hskip1cm
\begin{tikzpicture}

\draw[line width=1.5pt] (1,2)--(1,1)--(2,1) (1.5,3)--(1.5,1.5)--(3,1.5);
\draw[->, line width=1.2] (1.5,3)--(1.9,3);
\draw[->, line width=1.2] (2.5,1.5)--(2.5,1.9);
\foreach \y in {.5,1,1.5,2,2.5,3} \draw (.5,\y)--(3,\y);
\foreach \x in {.5,1,1.5,2,2.5,3} \draw (\x,.5)--(\x,3);
 
\foreach \x in {.5,1,1.5,2,2.5,3} \foreach \y in {.5,1,1.5,2,2.5,3}
     \node[V]() at (\x,\y){};

  \foreach \x in {2.5,3}\node[A]() at (\x,1.5){}; 
    \foreach \y in {3}\node[A]() at (1.5,\y){}; 
    \node[A]() at (2,1){};  \node[A]() at (1,2){}; 
     \node[A]() at (2,3){};  \node[A]() at (2.5,2){};   
         \node() at (2.3,.75){$t_1^\prime$}; 
            \node() at (2.75,1.27){$t_4^\prime$}; 
            \node() at (2.75,2.2){$t_4^*$};    
          \node() at (3.3,1.27){$t_2^\prime$}; 
        \node() at (1.3,3.3){$s_3^\prime$}; 
        \node() at (2.2,3.3){$s_3^*$};           
      \node() at (1.3,2.78){$s_2^\prime$};    
        \node() at (.75,2.3){$s_1^\prime$};    
\end{tikzpicture}

\end{center}
\caption{Case (VI)}
\label{VI}
\end{figure}
For the diagram (VI) suppose that $s_1,s_2,s_3\in A(1)\cap NW$ and  $t_1,t_2,t_4\in B(6)\cap SE$. Mate $s_3$ into
$s_3^\prime\in B(4)$ along $A(1)$, and mate $t_4$ into 
$t_4^\prime\in A(3)$ along $B(6)$. 
Since $s_3^\prime,t_3,s_4,t_4^\prime\in NE$, and NE is weakly $2$-linked, a linkage can be completed in NE for $\pi_3,\pi_4$. For the pairs $\pi_1,\pi_2$ a linkage can be obtained easily by taking shortest paths through SW as shown in the left of Fig.\ref{VI}.

Assume now that the terminals in one of the quadrants NW and SE is not of type $T_1$ as before. Then we apply Lemma \ref{heavy4} for NW with $A=B(3)\cap NW$ and $b=(3,2)$, and we apply Lemma \ref{heavy4}  for SE with $A=A(4)\cap SE$ and $b=(5,4)$. Then by the pigeon hole principle, we obtain a common index $\ell\in\{1,2\}$, say $\ell=1$, which satisfies: there exists a path in NW
from $s_1$ to $s_1^\prime=(3,2)$, and there exits a path in SE from $t_1$ to $t_1^\prime=(5,4)$, furthermore, terminals $s_2,s_3\in NW$, $t_2,t_4\in NE$ are mated into not necessarily distinct vertices $s_2^\prime,s_3^\prime\in B(3)$ and $t_2^\prime,t_4^\prime\in A(4)$ using edge disjoint mating paths. We take an $s_2^\prime,t_2^\prime$-path in $B(3)\cup A(4)$ to complete a linkage for $\pi_2$. Then
the mating paths to
$s_3^\prime,t_4^\prime$ are extended  into 
$s_3^*,t_4^*\in NE$. Since 
NE is weakly $2$-linked, a linkage can be completed there for $\pi_3,\pi_3$ (see on the right of Fig.\ref{VI}).\\

For q-diagram (VII)  we apply Lemma \ref{Caforpq} (i) with $Q=NW$ and $y_0=(1,3)$. W.l.o.g. we assume that there is a framing in $NW$
for $\pi_1,\pi_2$ with $C_1$ and a mating of $s_3$ into $y_0$. We extend this mating path to $s_3^*=(1,4)\in NE$.
Next we apply Lemma \ref{12toCa} with $Q=SE$, 
 and $p=1, q=2$. Thus we obtain a framing in $SE$ for $\pi_1,\pi_2$ to 
 $C_\alpha$,  for some $\alpha\in\{0,1\}$.
\begin{figure}[htp]
\begin{center}
 \tikzstyle{M}  = [circle, minimum width=.5pt, draw=black, inner sep=2pt]
\tikzstyle{B} = [rectangle, draw=black!, minimum width=1pt, fill=white, inner sep=1pt]
\tikzstyle{T} = [rectangle, minimum width=.1pt, fill, inner sep=2.5pt]
 \tikzstyle{V} = [circle, minimum width=1pt, fill, inner sep=1pt]

 \tikzstyle{C} = [circle, minimum width=1pt, fill=white, inner sep=2pt]
 \tikzstyle{A} = [circle, draw=black!, minimum width=1pt, fill, inner sep=1.7pt]
\tikzstyle{T} = [rectangle, minimum width=.1pt, fill, inner sep=3pt]
\tikzstyle{txt}  = [circle, minimum width=1pt, draw=white, inner sep=0pt]
\tikzstyle{Wedge} = [draw,line width=2.2pt,-,black!100]

\begin{tikzpicture}
      \draw[->,line width=1.2pt] (0,5) -- (0,4) -- (.9,4);
        \draw[->,line width=1.2pt] (1,5) -- (1,4.1);
           \draw[->,line width=1.2pt] (4,2) -- (3.1,2);
    \draw[->,line width=1.2pt]  (0,3) -- (0,3) -- (2,3) -- (2,4.9); 
      \draw[->,line width=1.2pt]  (2,5) -- (2.9,5); 
        \draw[->,line width=1.2pt]  (5,2) -- (5,0) -- (3,0) -- (3,1.9); 
         \draw[->,line width=1.2pt] (5,1)--(4.1,1);
 \draw[snake,line width=.5pt]  (1,2) -- (3,2) -- (3,4)  -- (1,4) -- (1,2)        (3.1,5) -- (4,5) -- (4,1);
         
  \draw[dashed] (0,4) -- (0,0) -- (3,0) (4,0)--(4,1)--(0,1) (5,2)--(5,5)--(4,5);	
   \draw[dashed] (0,5) -- (2,5)  (3,4)--(5,4) (2,3)--(5,3) (0,2)--(1,2) (4,2)--(5,2);
   	  \draw[dashed] (1,0) -- (1,2)  (2,0)--(2,3);
 \foreach \x in {0,1,2,3,4,5} 
    \foreach \y in {0,1,2,3,4,5} 
      { \node[B] () at (\x,\y) {};}
      
    \node[T](s4) at (3,4){};
    \node[txt]() at (3.35,4.3) {$s_4$};
     \node[T](t4) at (2,1){};   
     \node[txt]() at (1.7,.7) {$t_4$};
   \node[T,label=right:$t_3$](t3) at (5,1){};   
  \node[T](t1) at (4,2){};  
  \node[txt]() at (4.35,1.7) {$t_1$};   
    \node[txt]() at (4.3,.7) {$t_3^\prime$};       
         \node[T,label=right:$t_2$](t2) at (5,2){};
               \node[T,label=above:$s_1$](s1) at (0,5){};
          \node[T,label=above:$s_2$](s2) at (1,5){};
               \node[T,label=left:$s_3$](s3) at (0,3){};        
               \node(s3*) at (3,5.35){$s_3^*$};
             \node[M] () at (1,4) {};       
            \node[M] () at (3,2) {};  \node[M] () at (4,1) {}; 
              \node[M,label=above:$y_0$] () at (2,5) {}; 
              \node[M] () at (3,5) {}; 
   
       \node[txt]() at (1.4,3.6) {$D$};
          
              \node[txt]() at (3.6,3.6) {$P_3^*$};
\end{tikzpicture}
\begin{tikzpicture}
\draw[double,line width=.3pt]  (1,2) -- (3,2) -- (3,4)  -- (1,4) -- (1,2)     (3,5) -- (4,5) -- (4,1) -- (5,1);
\draw[double,line width=.3pt]  (0,3) -- (2,3) -- (2,5)  -- (3,5)
     (0,5) -- (0,4) -- (1,4)  (1,5)--(1,4);
  \draw[double,line width=.3pt]  (4,2) -- (3,2) (5,2) -- (5,0)--(3,0)--(3,2);
  \draw[line width=2.2pt]  (4,5) -- (5,5) -- (5,3) -- (2,3) -- (2,0);  
       \draw[line width=2.2pt]  (3,5) -- (3,4) -- (5,4);  
  \draw[line width=2.2pt]  (1,1)-- (1,2) -- (0,2) -- (0,0) -- (2,0) -- (2,2);  
   \draw[dashed] (0,5)--(2,5) (0,4)--(0,2) (5,3)--(5,2)--(4,2) (0,1)--(4,1);      
  \draw[dashed] (0,1)--(1,1) (4,0)--(4,1) (1,0)--(1,1) (2,0)--(3,0);
 \foreach \x in {0,1,2,3,4,5} 
    \foreach \y in {0,1,2,3,4,5} 
      { \node[B] () at (\x,\y) {};}

        \node[B]()at (4,3) {};
         \node[T](s4) at (3,4){};
         \node[txt]() at (3.35,4.3) {$s_4$};
         \node[T](t4) at (2,1){};
           \node[txt]() at (1.7,.7) {$t_4$};
   \node[T,label=right:$t_3$](t3) at (5,1){};   
         \node[T](t1) at (4,2){};  
          \node[txt]() at (4.35,1.7) {$t_1$};
          
         \node[T,label=right:$t_2$](t2) at (5,2){};
     \node[T,label=above:$s_1$](s1) at (0,5){};  
              \node[T,label=above:$s_2$](s2) at (1,5){};
         \node[T,label=left:$s_3$](s3) at (0,3){};   
             \node[txt]() at (2.6,5.3) {$P_3$};
               \node[txt]() at (2.6,3.6) {$P_2$};
                       \node[txt]() at (1.4,2.4) {$P_1$};
\end{tikzpicture}

\end{center}
\caption{Case (VII)}
\label{VII}
\end{figure}
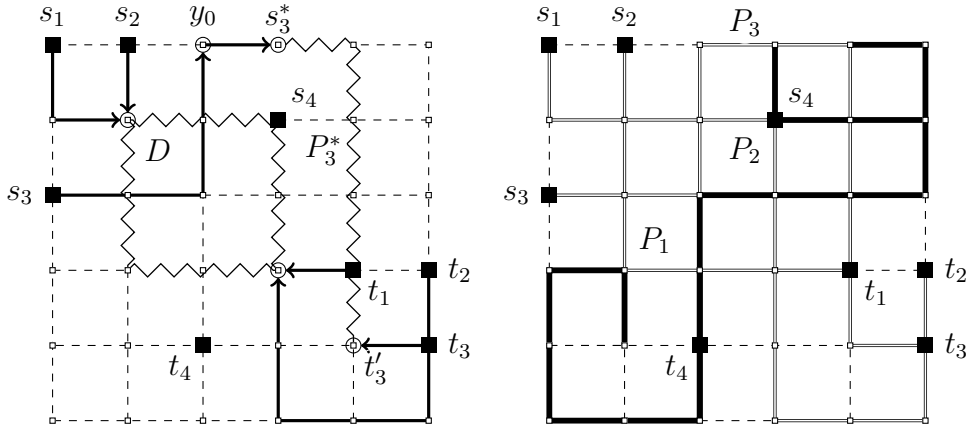

 For $\alpha =1$,  a linkage for $\pi_1,\pi_2$ is completed along $C_1$ and $t_3$ is mated in SE to $(4,4)\in C_0$. It remains to build a framing in $NE$ for $\pi_3,\pi_4$ with $C_0$. For this purpose we apply Lemma \ref{frame} with $s_3^*,s_4\in NE$ and mate $t_4$ in SW to $C_0$ not using edges of $C_1$.

For  $\alpha =0$, the solution is obtained by combining the frames as follows.  Let $D$ be  the $8$-cycle spanned by the neighbors of $(3,3)$ (see the left of Fig.\ref{VII}). Observe that no edges of $D$ have been used by the mating paths in the two framing. Thus a linkage $P_1,P_2$ for $\pi_1,\pi_2$ is completed
around $D$. A linkage $P_3$ for $\pi_3$ can be completed by a path 
$P^*_3\subset 
A(1)\cup B(5)$ from $s_3^*$ to $t_3^\prime=(5,5)$. The right picture in Fig.\ref{VII} shows that $s_4$ and $t_4$ are not disconnected by the linkage built so far, the tree highlighted in the picture 
saturates all vertices of $NE\cup SW$ and edge disjoint  from $P_1\cup P_2\cup P_3$. 
Hence 
 there is a  linkage for $\pi_4$.\\

A.4:  $\|NW\|=4$, let $s_1,s_2,s_3,s_4\in NW$.
Two cases will be distinguished according to whether there is a quadrant $Q\neq$NW with three or more terminals or not. 
 By symmetry, we may assume that $\|NE\|\geq \|SW\|$. 

A.4.1: $\|Q\|\geq 3$, where  $Q=$NE or SE.

 In each case we apply Lemma \ref{heavy4} twice: for $NW$ with  $A=A(3)\cap NW$, $B=B(3)\cap NW$, then for $Q$,  with $A=A(3)\cap Q$,
$B=B(3)\cap Q$, if $Q=$NE or
 with 
 $A=B(4)\cap Q$, $B=A(4)\cap Q$,
 if $Q=$SE. 
 
 Assume that there is a common index $\ell$, $1\leq j\leq 4$, resulting from the two applications of Lemma  \ref{heavy4}, such that 
$s_\ell$ is linked to $(2,3)$ in $NW$ 
and $t_\ell\in NE$ is linked in $NE$ to $(2,3)$ or
$t_\ell\in SE$ is  linked in SE to $(4,4)$, furthermore, the remaining (five or six) terminals are mated into $A(3)\cap NW$ and
into $A(3)\cap NW$ or $B(4)\cap SE$.

First we complete a linkage for $\pi_\ell$ by the inclusion of the edge  $(2,3)-(2,4)$.  W.l.o.g. assume that $\ell=1$. Lemma \ref{heavy4} also implies that the mating paths leading from $s_2,s_3,s_4$ to the not necessarily distinct mates 
$s_2^\prime,s_3^\prime,s_4^\prime$  are not using the edges of 
$A(3)$, and similarly, the mating paths in $Q$ to the not necessarily distinct mates
$t_i^\prime\in Q$ are not using the edges $A(3)$ or $B(4)$, for $Q=$NE or SE. 
 
  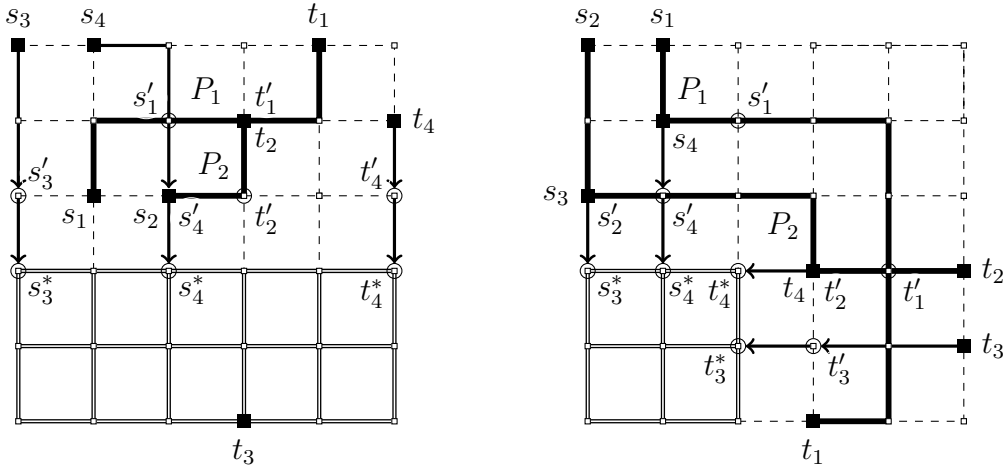
\begin{figure}[htp]
\begin{center}
  \tikzstyle{M}  = [circle, minimum width=.5pt, draw=black, inner sep=2pt]
\tikzstyle{B} = [rectangle, draw=black!, minimum width=1pt, fill=white, inner sep=1pt]
\tikzstyle{T} = [rectangle, minimum width=.1pt, fill, inner sep=2.5pt]
 \tikzstyle{V} = [circle, minimum width=1pt, fill, inner sep=1pt]

 \tikzstyle{C} = [circle, minimum width=1pt, fill=white, inner sep=2pt]
 \tikzstyle{A} = [circle, draw=black!, minimum width=1pt, fill, inner sep=1.7pt]
\tikzstyle{txt}  = [circle, minimum width=1pt, draw=white, inner sep=0pt]
\tikzstyle{Wedge} = [draw,line width=2.2pt,-,black!100]

\begin{tikzpicture}
  \draw[double,line width=.5pt]  (0,0) -- (0,2)--(5,2)--(5,0)--(0,0); 
 \draw[double,line width=.5pt] (3,0) -- (3,2); 
 \draw[double,line width=.5pt](4,2)--(4,0);
 \draw[double,line width=.5pt]    (0,1) -- (5,1) (0,2) -- (5,2);
 \draw[double,line width=.5pt]  (1,0) -- (1,2) (2,0) -- (2,2);
      
         \draw[->,line width=1.2pt] (0,5)--(0,3.1);   
         \draw[->,line width=1.2pt] (0,3)--(0,2.1); 
         \draw[->,line width=1.2pt] (1,5)--(2,5)--(2,3.1);   
         \draw[->,line width=1.2pt] (2,3)--(2,2.1); 
               \draw[->,line width=1.2pt] (5,4)--(5,3.1); 
                \draw[->,line width=1.2pt] (5,3)--(5,2.1); 
                   
              \draw[line width=2.2pt] (1,3)--(1,4)--(4,4)--(4,5);
             \draw[line width=2.2pt] (2,3)--(3,3)--(3,4);
             
   \draw[dashed] (5,3)--(3,3)-- (3,2) (3,5)-- (3,4) (0,3)--(2,3);
    \draw[dashed] (0,4)--(1,4)--(1,5)--(0,5) (1,2)--(1,3) (2,5)--(5,5)--(5,4)--(4,4)--(4,2) ;
          
 \foreach \x in {0,1,2,3,4,5} 
    \foreach \y in {0,1,2,3,4,5} 
      { \node[B] () at (\x,\y) {};}

        \node[B]()at (4,3) {};
                               
\node[M]()at (2,4) {};\node[M]()at (3,3) {};
\node[M]()at (0,3) {};\node[M]()at (5,3) {};
                                  
       \node[T](s2) at (2,3){};  
        \node[txt]() at (.3,3.3) {$s_3^\prime$};     
      \node[T,label=above:$s_3$](s3) at (0,5){};   
  \node[T,label=above:$s_4$](s4) at (1,5){};
  \node[T](s1) at (1,3){};   \node[txt](s1) at (.75,2.7) {$s_1$};  
   \node[txt]() at (2.3,2.7) {$s_4^\prime$};   
   \node[txt]() at (3.3,2.7) {$t_2^\prime$};  
     \node[txt]() at (4.7,3.3) {$t_4^\prime$};  
     \node[txt](s2) at (1.7,2.7) {$s_2$};  
     
      \node[txt](s1') at (1.7,4.3) {$s_1^\prime$};   
   \node[txt](t1') at (3.3,4.3) {$t_1^\prime$};  
   
     \node[T](t2) at (3,4){};    
     \node() at (3.3,3.75){$t_2$};
   \node[T,label=above:$t_1$](t1) at (4,5){};  
   \node[T,label=right:$t_4$](t4) at (5,4){};   
    \node[T,label=below:$t_3$](t3) at (3,0){}; 
     
    \node(s1*) at (2.3,1.75){$s_4^*$};  \node[M] () at (2,2) {}; 
     \node(s2*) at (.3,1.75){$s_3^*$}; \node[M]()at (0,2) {};
   \node(t1*) at (4.7,1.7){$t_4^*$}; \node[M]()at (5,2) {};
       
    \node[txt]() at (2.6,3.4) {$P_2$};
     \node[txt]() at (2.5,4.4) {$P_1$};                       
                                         
\end{tikzpicture}
\hskip1cm
\begin{tikzpicture}
 \draw[double,line width=.5pt]    (0,0) -- (2,0) (0,1)--(2,1)
 (0,2)--(2,2);
 \draw[double,line width=.5pt] (0,0) -- (0,2) 
 (1,0) -- (1,2) (2,0) -- (2,2);
      
         \draw[->,line width=1.2pt] (0,3)--(0,2.1); 
         \draw[->,line width=1.2pt] (1,3)--(1,2.1);   
         \draw[->,line width=1.2pt] (5,1)--(3.1,1);  
          \draw[->,line width=1.2pt] (3,1)--(2.1,1);     
           \draw[->,line width=1.2pt] (1,4)--(1,3.1);   
            \draw[->,line width=1.2pt] (3,2)--(2.1,2);       
   \draw[line width=2.2pt] (1,5)--(1,4)--(4,4)--(4,0)--(3,0);
             \draw[line width=2.2pt] (0,5)--(0,3)--(3,3)--(3,2)--(5,2);
             
 \draw[dashed] (0,5)--(5,5)--(5,0)--(4,0) (3,5)-- (5,5)-- (5,4) (1,4)--(1,5) (2,5)--(2,2);
 \draw[dashed] (0,4)--(0,3)--(1,3) (4,4)--(5,4) (4,5)--(4,2) 
 (3,5)--(3,3)--(5,3) (2,0)--(3,0)--(3,2) (0,4)--(1,4);
          
 \foreach \x in {0,1,2,3,4,5} 
    \foreach \y in {0,1,2,3,4,5} 
      { \node[B] () at (\x,\y) {};}

        \node[B]()at (4,3) {};
     \node[T](s4) at (1,4){};   \node[txt]() at (1.3,3.7) {$s_4$}; 
       \node() at (1.3,2.7) {$s_4^\prime$}; 
          \node() at (2.3,4.3) {$s_1^\prime$}; 
   \node[T](t4) at (3,2){};  \node()at(2.75,1.75) {$t_4$}; 
     \node[T,label=left:$s_3$](s3) at (0,3){};   
     \node(s3') at (.3,2.7) {$s_2^\prime$}; 
      
       \node[T,label=below:$t_1$](t1) at (3,0){};            
       \node[T,label=right:$t_2$](t2) at (5,2){}; 
        \node[txt](t3') at (3.35,.7) {$t_3^\prime$}; 
          
       \node[txt](t1') at (4.35,1.7) {$t_1^\prime$};   
          \node[txt](t2') at (3.3,1.7) {$t_2^\prime$};  
      \node[T,label=above:$s_2$](s2) at (0,5){};   
  \node[T,label=above:$s_1$](s1) at (1,5){};
         \node[T,label=right:$t_3$](t3) at (5,1){};   
  
\node[M]()at (0,2) {};  \node[M]()at (1,2) {};  
\node[M]()at (4,2) {};  \node[M]()at (2,2) {};  
  \node[M]()at (2,4) {};   \node[M]()at (3,1) {}; 
     \node[M]()at (2,1) {};
    \node(s4*) at (1.25,1.75){$s_4^*$};  
     \node(s3*) at (.3,1.75){$s_3^*$}; 
     \node[M]()at (1,3) {};
     \node(t3*) at (1.7,.7){$t_3^*$}; 
       \node(t3*) at (1.758,1.7){$t_4^*$}; 
  
 \node[txt]() at (2.6,2.6) {$P_2$};
\node[txt]() at (1.4,4.4) {$P_1$};                                                             
\end{tikzpicture}

\end{center}
\caption{$\|NW\|=4$, $\|Q\|\geq 3$}
\label{NW4}
\end{figure}

Next a  linkage for another pair 
$\pi_j$, $2\leq j\leq 4$ is completed along $A(3)$ (if $Q=$NE) or along $A(3)\cup B(4)$ (if $Q=$SE), where $j$ is selected as follows:  $j$ is arbitrary provided all mates are distinct; $j$ is an index if $s_j^\prime\in A(3)$ or $t_j^\prime\in A(3)$ or $t_j^\prime\in B(4)$
is the only vertex hosting two mates in $NW$ (or in $Q$); if both NW and $Q$ contain repeated mates,
then $j\in\{2,3,4\}$ is selected to satisfy that both $s_j^\prime$ and $t_j^\prime$ are repeated mates (such index $j$ exists by the pigeon hole principle). W.l.o.g. let $j=2$.

Finally, a linkage  can be obtained  by extending (three or four) mating paths
from the remaining distinct mates into neighbors in $SW\cup SE$ (if $Q=$NE) or into neighbors in SW (if $Q=$SE). Then the linkage for
$\pi_3,\pi_4$ can be completed in the $3\times 6$
 grid $SW\cup SE$ or in the quadrant SW which are both weakly-$2$-linked, by Lemma \ref{w2linked} (Fig.\ref{NW4} shows solutions).
 
 Therefore a solution is obtained once a common index $\ell$ can be selected
 to link $\pi_\ell$ as above. By the pigeon hole principle there is a common index $\ell$ unless the terminal set in NW is of type $T_2$, and
 the terminal set in $Q$ is of type $T_1$ or $T_2$ in Fig.\ref{except} (ii). 
 
 We handle the exceptional cases one-by-one. 
 Let $s_1=(2,3), s_2=(1,3)$, $s_3=(1,2)$ and $s_4=(1,1)$. 
 
 For $\|SE\|=4$, if the terminals in NW  are located according to type $T_2$ as well, then the argument using the common index $\ell$ can be repeated by 
 switching the role of 
 NE and SE. Since the pattern $T_2$ is not symmetric about the diagonal, the solution above works.
 
  \begin{figure}[htp]
\begin{center}
  \tikzstyle{M}  = [circle, minimum width=.5pt, draw=black, inner sep=2pt]
\tikzstyle{B} = [rectangle, draw=black!, minimum width=1pt, fill=white, inner sep=1pt]
\tikzstyle{T} = [rectangle, minimum width=.1pt, fill, inner sep=2.5pt]
 \tikzstyle{V} = [circle, minimum width=1pt, fill, inner sep=1pt]

 \tikzstyle{C} = [circle, minimum width=1pt, fill=white, inner sep=2pt]
 \tikzstyle{A} = [circle, draw=black!, minimum width=1pt, fill, inner sep=1.7pt]
\tikzstyle{txt}  = [circle, minimum width=1pt, draw=white, inner sep=0pt]
\tikzstyle{Wedge} = [draw,line width=2.2pt,-,black!100]

\begin{tikzpicture}
 \foreach \x in {0,...,5} 
    \foreach \y in {0,1,2,3,4,5} 
      { \draw  (0,\y)--(5,\y); \draw (\x,0)--(\x,5);}  
      
 \draw[line width=2.2pt] (2,5)--(4,5)
 (1,5)--(1,4)--(3,4)--(3,5) ;
   \draw[line width=2.2pt] (0,5)--(0,2)--(3,2)--(3,4)
 (2,4)--(2,3)--(5,3)--(5,5) ;
             
 \foreach \x in {0,1,2,3,4,5} 
    \foreach \y in {0,1,2,3,4,5} 
      { \node[B] () at (\x,\y) {};}                        
\node[M]() at (5,3){}; \node[M]() at (3,2){}; 
\node[M]() at (0,2){}; \node[M]() at (2,3){}; 
\node() at (5.3,2.7) {$t_1^\prime$};  
   \node() at (1.7,2.7) {$s_1^\prime$};                        
 \node[T](s1) at (2,4){};   \node() at (1.75,3.75) {$s_1$}; 
        
  \node[T,label=above:$s_4$](s4) at (0,5){};   
  \node[T,label=above:$s_3$](s3) at (1,5){};    
     \node[T,label=above:$s_2$](s2) at (2,5){};  
 
     \node[T,label=above:$t_2$](t2) at (4,5){};    
  
   \node[T,label=above:$t_1$](t1) at (5,5){};  
   \node[T,label=above:$t_3$](t3) at (3,5){};   
    \node[T](t4) at (3,4){}; \node() at (3.25,3.75) {$t_4$};
  \node() at (3.25,1.75) {$t_4^\prime$};
   \node() at (.3,1.75) {$s_4^\prime$};
      \node() at (.4,2.4) {$P_4$};   
     \node() at (4.6,3.4) {$P_1$};  
     \node() at (3.6,4.6) {$P_2$};   
     \node() at (1.4,4.4) {$P_3$}; 
                                 
\end{tikzpicture}

\end{center}
\caption{$\|NE\|=4$}
\label{diagonal2}
\end{figure}

 For $\|NE\|=4$, we have $\{t_3,t_4\}=\{(1,4),(2,4)\}$ and $\{t_1,t_2\}=\{(1,5),(1,6)\}$. Thus there are
 two pairs are in $A(1)$, say 
 $\pi_2,\pi_3\subset A(1)$. Their linkage can be done 
 using  the $s_2,t_2$-path $P_2\subset A(1)$
 and the $s_3,t_3$-path $P_3\subset (B(2)\cup A(2)\cup B(4))$. The remaining terminals can be mated along their distinct columns  into vertices $s_1^\prime,t_1^\prime\in A(3)$ and $s_4^\prime,t_4^\prime\in A(4)$. The linkage
 for $\pi_1,\pi_4$ can be completed  along $A(3)$ and $A(4)$, respectively (see Fig.\ref{diagonal2}).

  \begin{figure}[htp]
\begin{center}
  \tikzstyle{M}  = [circle, minimum width=.5pt, draw=black, inner sep=2pt]
\tikzstyle{B} = [rectangle, draw=black!, minimum width=1pt, fill=white, inner sep=1pt]
\tikzstyle{T} = [rectangle, minimum width=.1pt, fill, inner sep=2.5pt]
 \tikzstyle{V} = [circle, minimum width=1pt, fill, inner sep=1pt]

 \tikzstyle{C} = [circle, minimum width=1pt, fill=white, inner sep=2pt]
 \tikzstyle{A} = [circle, draw=black!, minimum width=1pt, fill, inner sep=1.7pt]
\tikzstyle{txt}  = [circle, minimum width=1pt, draw=white, inner sep=0pt]
\tikzstyle{Wedge} = [draw,line width=2.2pt,-,black!100]

\begin{tikzpicture}
 \foreach \x in {0,...,5} 
    \foreach \y in {0,1,2,3,4,5} 
      { \draw  (0,\y)--(5,\y); \draw (\x,0)--(\x,5);}  
      
         \draw[->,line width=1.2pt] (5,3)--(5,2.1);   
      
 \draw[double,line width=.5pt] (1.7,5.6)--(5.7,5.6)-- (5.7,2.7)--(1.7,2.7)--(1.7,5.6);
 \draw[double,line width=.5pt] (1.3,5.6)--(-.4,5.6)--(-.4,-.7)--(5.7,-.7) --(5.7,2.35)--(1.3,2.35)--(1.3,5.6) ;
        
 \foreach \x in {0,1,2,3,4,5} 
    \foreach \y in {0,1,2,3,4,5} 
      { \node[B] () at (\x,\y) {};}                        
\node[M](t3') at (5,2){}; 
\node() at (5.3,2) {$t_3^\prime$};                            
 \node[T](s1) at (2,4){};   \node() at (2.3,3.7) {$s_1$}; 
        
  \node[T,label=above:$s_4$](s4) at (0,5){};   
  \node[T,label=above:$s_3$](s3) at (1,5){};    
     \node[T,label=above:$s_2$](s2) at (2,5){};  
 
     \node[T,label=right:$t_2$](t2) at (5,4){};    
  
   \node[T,label=right:$t_1$](t1) at (5,5){};  
   \node[T,label=below:$t_4$](t4) at (2,0){};   
    \node[T,label=right:$t_3$](t3) at (5,3){}; 
  
      \node[txt]() at (5.4,.4) {$G^\prime$};                            
\end{tikzpicture}
\hskip1cm
\begin{tikzpicture}
 \foreach \x in {0,...,5} 
    \foreach \y in {0,1,2,3,4,5} 
      { \draw  (0,\y)--(5,\y); \draw (\x,0)--(\x,5);}  
          
 \draw[double,line width=.5pt] (1.7,5.6)--(5.7,5.6)-- (5.7,-.7)--(3.7,-.7)--(3.7,3.55)--(1.7,3.55)--(1.7,5.6);
\draw[double,line width=.5pt] (1.3,5.6)--(-.4,5.6)--
(-.4,-.7)--(3.3,-.7) --(3.3,3.35)--(1.3,3.35)--(1.3,5.6) ;
        
 \foreach \x in {0,1,2,3,4,5} 
    \foreach \y in {0,1,2,3,4,5} 
      { \node[B] () at (\x,\y) {};}                        

 \node[T](s1) at (2,4){};   \node() at (2.3,3.7) {$s_1$}; 
        
  \node[T,label=above:$s_4$](s4) at (0,5){};   
  \node[T,label=above:$s_3$](s3) at (1,5){};    
     \node[T,label=above:$s_2$](s2) at (2,5){};  
 
     \node[T,label=below:$t_2$](t2) at (4,0){};    
  
   \node[T,label=below:$t_1$](t1) at (5,0){};  
   \node[T,label=below:$t_3$](t3) at (3,0){};   
    \node[T](t4) at (2,2){};   \node() at (2.3,2.3){$t_4$}; 
  
      \node[txt]() at (5.4,.4) {$G^\prime$};                            
\end{tikzpicture}

\end{center}
\caption{}
\label{Qexcept}
\end{figure}

For $\|NE\|=3$ we have  $\{t_1,t_2\}=\{(1,6),(2,6)\}$ and $(3,6)=t_3$ or $t_4$. First we mate the terminal at $(3,6)$ to $(4,6)$. The grid
$G^\prime=(SW\cup SE)\cup (B(1)\cup B(2))$ is 
 $2$-path pairable, thus  a linkage for $\pi_3,\pi_4$ can be completed in $G^\prime$. In $G-G^\prime$ which is $2$-path-pairable as well, there is a linkage for $\pi_1,\pi_2$ (see the left of Fig. \ref{Qexcept}).

For $\|SE\|=3$ we have  $\{t_1,t_2\}=\{(6,5),(6,6)\}$ and $(6,4)=t_3$ or $t_4$. The $2$-path pairable grid
$G^\prime=(A(1)\cup A(2))\cup (B(5)\cup B(6))\setminus (B(1)\cup B(2))$ and its complement are both $2$-path pairable. Thus $G^\prime$ contains  a linkage for $\pi_1,\pi_2$, and $G-G^\prime$ contains a linkage for $\pi_3,\pi_4$ (see the right of Fig. \ref{Qexcept}).\\
 
In the remaining cases we have  $\|Q\|\leq 2$, for every $Q\neq$ NW.
 Since $2\geq \|NE\|\geq \|SW\|$, we have either 
 $\|SE\|=2$ and $\|NE\|=\|SW\|= 1$ or $\|NE\|=2$.\\
 
  A.4.2: $\|SE\|=2$ and $\|NE\|=\|SW\|= 1$.
 We apply Lemma \ref{heavy4} for NW with 
 $A=A(3)\cap NW$ and $B=B(3)\cap NW$. There are at least two terminals that can be mapped into $(2,3)$, hence by the pigeon hole principle,  there is an index $1\leq \ell\leq 4$ such that $s_\ell$ is mated to $s_\ell^\prime=(2,3)$ and 
$t_\ell\in  NE\cup SE$. W.l.o.g. we may assume that $\ell=1$, and the  terminals
$s_2,s_3,s_4$ are mated into $s_2^\prime,s_3^\prime,s_4^\prime\in A(3)$ by the lemma.  
If $t_1\in NE$, then a linkage for $\pi_1$ is completed by an $s_1^\prime,t_1$-path. Moreover, if 
 $s_2^\prime,s_3^\prime,s_4^\prime$ are distinct
 then  the linkage for the remaining terminals can be completed 
  in the $3$-path-pairable grid $G^*=G-(A(1)\cup A(2))$. 
  
  Assume now that $s_2^\prime,s_3^\prime,s_4^\prime$ are not distinct, 
 let  $w\in A(3)\cap NW$ be the mate of two terminals of NW, that is $s_i^\prime=s_j^\prime=w$, for some $2\leq i<j\leq 4$ (actually, one of them is a terminal, $s_i^\prime=s_i$ or $s_j^\prime=s_j$). Since $\|SW\|\leq 1$,
 $t_i$ or $t_j$ is a terminal in $NE\cup SE$, say $t_i\in NE\cup SE$; let $t_k$ be the third terminal in $NE\cup SE$ (that is $t_1,t_i,t_k\in N$). 
 
 We plan to specify a linkage for $\pi_1$ by mating $t_1$ to $s_1^\prime$, then specify a linkage for $\pi_i$
 by mating $t_i$ to a vertex of $A(3)$; the remaining terminals
 can be mated into the weakly $2$-linked SW and find there a linkage for $\pi_j,\pi_k$. The plan is easy to realize provided $t_1\in NE$.  It is enough to mate $t_i\in SE$ along its column to $t_i^\prime\in A(3)$, then $s_j^\prime,s_k^\prime\in A(3)$ to $s_j^*,s_k^*\in A(4)$, and to mate $t_k\in SE$ along its row to $t_k^*\in B(3)$.
 
 Assume now that 
 $t_1\in SE$.
We introduce three auxiliary terminals in NE,
 let $x=(2,4)$, $x^\prime=(3,5)$, and $y=(3,4)$.  There exist an $x,x^\prime$-path $X$ and 
 an edge disjoint  path $Y$ from the  terminal of NE to $y$ not using edges of $A(3)$. If the terminal of NE is $t_k$, and thus $t_1,t_i\in SE$, then we extend $Y$ to 
 $t_k^*=(4,3)$ by adding the path $y-(4,4)-t_k^*$, furthermore, we mate
 $t_1$ to $t_1^\prime=(4,5)$ and we mate $t_i$ to $t_i^\prime=(4,6)$ (see the left of Fig.\ref{adhoc}).
 If the terminal of NE is $t_i$, and thus $t_1,t_k\in SE$, 
 let $t_i^\prime=y$ be the mate of $t_i$, we mate
 $t_1$ to $t_1^\prime=(4,5)$, and we mate $t_k$ to 
 $t_k^\prime=(6,3)$. 
 
 In each case we complete a linkage for $\pi_1$ by adding the path $X$ and the two edges $s_1^\prime x$ and $t_1^\prime x^\prime$; and we complete a linkage for $\pi_i$ by adding the $s_i^\prime, t_i^\prime$-path in $A(3)$. The unpaired terminals/mates from
 $A(3)\cup B(4)$ are mated
  into the weakly $2$-linked quadrant SW to complete a solution. An example is shown in the left of Fig.\ref{adhoc}. 
 
 \begin{figure}[htp]
\begin{center}
  \tikzstyle{M}  = [circle, minimum width=.5pt, draw=black, inner sep=2pt]
\tikzstyle{B} = [rectangle, draw=black!, minimum width=1pt, fill=white, inner sep=1pt]
\tikzstyle{T} = [rectangle, minimum width=.1pt, fill, inner sep=2.5pt]
 \tikzstyle{V} = [circle, minimum width=1pt, fill, inner sep=1pt]

 \tikzstyle{C} = [circle, minimum width=1pt, fill=white, inner sep=2pt]
 \tikzstyle{A} = [circle, draw=black!, minimum width=1pt, fill, inner sep=1.7pt]
 \begin{tikzpicture}   
 \draw[line width=2.2pt] (0,4)--(3,4)(4,3)--(4,2)--(4,1)--(3,1);
  \draw[line width=2.2pt] (1,3)--(5,3)--(5,2);
  \draw[snake](3,4)--(4,4)--(4,3) (4,5)--(3,5)--(3,3);
 
\foreach \x in {0,1,2} \draw[double,line width=.5pt] (\x,2)--(\x,0);
 \foreach \y in {0,1,2} \draw[double,line width=.5pt] (0,\y)--(2,\y);  
 \draw[->,line width=1.2pt] (3,0)-- (5,0)--(5,1.9);  
    \draw[->,line width=1.2pt] (0,5)--(0,3.1);
     \draw[->,line width=1.2pt] (0,3)--(0,2.1);

    \draw[->,line width=1.2pt] (1,5)--(1,3.1);
     \draw[->,line width=1.2pt] (1,3)--(1,2.1);
    \draw[->,line width=1.2pt] (3,3)--(3,2)--(2.1,2);
    
\draw[dashed]  (0,5)--(3,5) (4,4)--(4,5)--(5,5)--(5,3)(4,0)--(4,1)--(5,1); 
\draw[dashed]  (2,5)--(2,2) (2,0)--(3,0)--(3,2)--(5,2)(2,1)--(3,1)(4,4)--(5,4); 

 \foreach \x in {0,1,2,3,4,5} 
    \foreach \y in {0,1,2,3,4,5} 
      { \node[B] () at (\x,\y) {};}
                                
          \node[T](t1) at (3,1){};  \node()at(3.3,.7){$t_1$};                                
     \node[T,label=above:$t_k$](tk) at (4,5){}; 
 \node[T](ti') at (5,2){}; \node() at (5.3,1.75) {$t_i^\prime$}; 
       \node[T,label=below:$t_j$](tj) at (0,0){};    
      \node[T,label=below:$t_i$](ti) at (3,0){}; 
        \node[T,label=above:$s_k$](sk) at (0,5){}; 
        \node[T,label=left:$s_1$](s1) at (0,4){};  
             \node[T,label=above:$s_j$](sj) at (1,5){}; 
                             
          \node[T](si) at (1,3){};  \node() at (.7,2.7) {$s_i$};  
    \node[M]()at (4,3) {};   \node() at (4.3,3.3) {$x^\prime$}; 
         \node[M]()at (3,4) {};   \node() at (3.3,4.3) {$x$};  
    \node[M]()at (3,3) {};   \node() at (3.25,3.25) {$y$};   
            \node() at (.7,3.3) {$w$}; 
            \node[M,label=left:$s_k^\prime$]()at (0,3) {}; 
            
\node[M]()at (2,4) {};  \node() at (1.7,4.3) {$s_1^\prime$}; 
\node[M]()at (4,2) {};   \node() at (4.3,1.7) {$t_1^\prime$};  \node[T](s3) at (1,3){}; \node()at(1.35,3.3){$s_j^\prime$}; 
\node[M]()at (1,2) {};   \node() at (1.35,2.3) {$s_j^*$}; 
\node[M]()at (2,2) {};   \node() at (2.35,2.3) {$t_k^*$}; 

\node[M,label=left:$s_k^*$]()at (0,2) {};                         
       \node() at (3.65,3.65) {$X$};    
           \node() at (2.7,4.7) {$Y$};                   
            \node() at (.4,.4) {$G^*$};                
\end{tikzpicture}                     
\hskip1cm
\begin{tikzpicture}

\foreach \x in {0,...,5} \draw[double,line width=.5pt] (\x,3)--(\x,0);
 \foreach \y in {0,...,3} \draw[double,line width=.5pt] (0,\y)--(5,\y);
  \draw[line width=2.2pt] (0,5)--(5,5)--(5,4);
  
  \draw[->,line width=1.2pt] (2,4)--(0,4)--(0,3.1);
  \draw[->,line width=1.2pt] (1,5)--(1,3.1);
   \draw[->,line width=1.2pt] (2,5)-- (2,3.1); 
   \draw[->,line width=1.2pt] (4,5)-- (4,3.1); 

 \draw[dashed]  (0,5)--(0,4) (2,4)--(5,4)--(5,3) (3,5) -- (3,3); 
  
 \foreach \x in {0,1,2,3,4,5} 
    \foreach \y in {0,1,2,3,4,5} 
      { \node[B] () at (\x,\y) {};}
  
       \node[T,label=right:$t_4$](t2) at (5,4){};   
       \node[T](t4) at (2,2){};    \node()at(2.3,1.7){$t_2$};
                               
     \node[T,label=below:$t_1$](t1) at (5,0){};                                           
     \node[T,label=above:$t_3$](t3) at (4,5){};
    
        \node[T,label=above:$s_1$](s1) at (2,5){}; 
        \node[T,label=above:$s_4$](s4) at (0,5){};  
         \node[T,label=above:$s_3$](s3) at (1,5){};                 
         \node[T](s2) at (2,4){};  \node() at (1.7,4.3) {$s_2$}; 

\node[M]()at (2,3) {};      \node() at (1.7,3.3) {$s_1^{*}$}; 
\node[M]()at (1,3) {};      \node() at (.7,3.3) {$s_3^*$}; 
\node[M]()at (4,3) {};      \node() at (3.7,3.3) {$t_3^*$}; 
\node[M]()at (0,3) {};      \node() at (-.3,3.3) {$s_2^*$}; 
                         \node() at (4.6,4.6) {$P_4$};                                          
                         \node() at (.4,.4) {$G^*$};                     
\end{tikzpicture}

\end{center}
\caption{$\|NW\|=4$}
\label{adhoc}
\end{figure}

A.4.3:  $\|NE\|=2$. The solution starts with Lemma \ref{heavy4} applied for NW with $A=A(3)\cap NW$ and $B=B(3)\cap NW$. Since $\|NW\|=4$, there are three terminals that can be mated to $b=(2,3)$ and the other ones to $A$ unless  the four terminals in NW are located according to type $T_2$ in Fig.\ref{except}. First we sketch a solution for this exceptional case when
 $\{s_1,s_2\}=\{(1,3),(2,3)\}$ and $\{s_3,s_4\}=\{(1,1),(1,2)\}$, furthermore, the two terminals in NE are $t_3,t_4$. The solution on the right of Fig.\ref{adhoc} 
starts with a linkage $P_4$ for $\pi_4$ not using edges of NW not in $A(1)\cap NW$, and mating the other terminals of  $NW\cup NE$ into distinct vertices 
$s_1^*,s_2^*,s_3^*,t_3^*\in A(3)$. The linkage is completed for $\pi_1,\pi_2,\pi_3$ in the $3$-path-pairable $G^*=G-(A(1)\cup A(2))$.

Therefore, we may assume that the terminals in NW are not in position $T_2$, and when we apply Lemma \ref{heavy4} for NW with 
 $A=A(3)\cap NW$ and $B=B(3)\cap NW$, there are three terminals one can map into $(2,3)$. By the pigeon hole principle,  there is an index $1\leq \ell\leq 4$ such that $s_\ell$ is mated to $s_\ell^\prime=(2,3)$ and 
$t_\ell\in  NE\cup SE$. W.l.o.g. we may assume that $\ell=1$, and the  terminals
$s_2,s_3,s_4$ are mated into $s_2^\prime,s_3^\prime,s_4^\prime\in A(3)$ by the lemma.  If 
  $s_2^\prime,s_3^\prime,s_4^\prime$ are distinct, then we follow the solution given for the particular case above. 
  A linkage for $\pi_1$ is specified first, then 
   the $3$-path-pairability of $G^*=G-(A(1)\cup A(2))$ is used to obtain a linkage for the remaining pairs.
  
 \begin{figure}[htp]
\begin{center}
  \tikzstyle{M}  = [circle, minimum width=.5pt, draw=black, inner sep=2pt]
\tikzstyle{B} = [rectangle, draw=black!, minimum width=1pt, fill=white, inner sep=1pt]
\tikzstyle{T} = [rectangle, minimum width=.1pt, fill, inner sep=2.5pt]
 \tikzstyle{V} = [circle, minimum width=1pt, fill, inner sep=1pt]

 \tikzstyle{C} = [circle, minimum width=1pt, fill=white, inner sep=2pt]
 \tikzstyle{A} = [circle, draw=black!, minimum width=1pt, fill, inner sep=1.7pt]
\tikzstyle{txt}  = [circle, minimum width=1pt, draw=white, inner sep=0pt]
\tikzstyle{Wedge} = [draw,line width=2.2pt,-,black!100]
\hskip1cm
 \begin{tikzpicture}
\foreach \x in {0,1,2} \draw[double,line width=.5pt] (\x,2)--(\x,0);
 \foreach \y in {0,1,2} \draw[double,line width=.5pt] (0,\y)--(2,\y);
   
 \draw[line width=2.2pt] (4,5)--(4,4)--(2,4);
  \draw[line width=2.2pt] (5,2)--(5,3)--(1,3);
  
    \draw[->,line width=1.2pt] (0,5)--(0,3.1);
  \draw[->,line width=1.2pt] (1,5)--(1,3.1);
  
\draw[dashed] (0,5)--(5,5)--(5,4)(4,1)--(4,0)--(5,0)--(5,2);
 \draw[dashed] (0,4)--(2,4)--(2,2)--(5,2) (2,0)--(4,0);    
  \draw[dashed] (3,5)--(3,0) (4,1)--(5,1) 
  (0,3)--(1,3)(5,3)--(5,4);
  
  \draw[->,line width=1.2pt]  (1,3)--(1,2.1);
     \draw[->,line width=1.2pt] (4,3)--(4,2.1);
    \draw[->,line width=1.2pt] (4,2)--(4,1)-- (2.1,1);
   \draw[->,line width=1.2pt] (0,3)--(0,2.1);
    \draw[->,line width=1.2pt] (5,4)--(4,4)--(4,3.1);
    
 \foreach \x in {0,1,2,3,4,5} 
    \foreach \y in {0,1,2,3,4,5} 
      { \node[B] () at (\x,\y) {};}
                                
     \node[M]() at (4,3){};  \node()at(4.3,2.7){$t_k^\prime$};                                
     \node[T,label=above:$t_1$](t1) at (4,5){}; 
 \node[T](tk) at (5,4){}; \node() at (5.3,3.75) {$t_k$}; 
       \node[T,label=below:$t_j$](tj) at (0,0){};   
  \node[T](ti) at (5,2){}; \node() at (5.3,1.75) {$t_i$};         
    
        \node[T,label=above:$s_k$](sk) at (0,5){}; 
        \node[T,label=left:$s_1$](s1) at (0,4){};  
         \node[T,label=above:$s_j$](sj) at (1,5){};                 
          \node[T](si) at (1,3){};  \node() at (.7,2.7) {$s_i$};   
            \node() at (.7,3.3) {$w$}; 
\node[M]()at (2,4) {};  \node() at (1.7,4.3) {$s_1^\prime$}; 
\node[M]()at (3,4) {};  \node() at (2.7,4.3) {$t_1^\prime$}; 
 \node[M]()at (3,1) {}; \node[M]()at (4,2) {};  
\node[M,label=left:$s_k^\prime$]()at (0,3) {};     
\node[M]()at (5,3) {};  \node() at (5.3,2.7) {$t_i^{\prime}$};  \node[M]()at (2,1) {};   \node()at(2.3,.7){$t_k^*$};
\node[T](sj) at (1,3){}; \node() at (1.35,3.3) {$s_j^\prime$};     
\node[M]()at (1,2) {};   \node() at (1.35,2.3) {$s_j^*$}; 
\node[M,,label=left:$s_k^*$]()at (0,2) {};          
                     
       \node[txt]() at (3.6,4.4) {$P_1$};  
                        \node[txt]() at (2.5,2.5) {$P_2$};
                         \node[txt]() at (.4,.4) {$G^*$};                
\end{tikzpicture}   
\hskip.3cm
\begin{tikzpicture}

  \draw[line width=2.2pt] (0,4)--(4,4)--(4,5);
   \draw[snake]  (0,3)--(0,1)--(4,1)--(4,3);
          
\foreach \x in {1,2} \draw[double,line width=.5pt] (\x,3)--(\x,0);
\foreach \y in {0,3} \draw[double,line width=.5pt] (1,\y)--(2,\y);

  \draw[->,line width=1.2pt] (0,5)--(0,3.1);
  \draw[->,line width=1.2pt] (1,5)--(1,3.1);
 \draw[->,line width=1.2pt]  (4,4)--(4,3.1);
  \draw[->,line width=1.2pt]  (0,0)--(.9,0);
       
 \draw[dashed]  (0,5)--(5,5)--(5,0) -- (2,0) (0,0)--(0,1);
  \draw[dashed]  (4,1)--(5,1) (5,2)--(0,2)(0,3)--(1,3); 
 \draw[dashed]  (3,5)-- (3,0) (2,3) --(5,3)(4,0)--(4,1);
  \draw[dashed]   (2,5)--(2,4)(4,4)--(5,4)(2,3)--(2,4); 
 
 \foreach \x in {0,1,2,3,4,5} 
    \foreach \y in {0,1,2,3,4,5} 
      { \node[B] () at (\x,\y) {};}
  
       \node[T,label=above:$s_2$](s2) at (0,5){}; 
        \node[T,label=left:$s_1$](s1) at (0,4){};  
         \node[T,label=above:$s_4$](s4) at (1,5){};                 
          \node[T](s3) at (1,3){};  \node() at (.7,2.7) {$s_3$};   
            \node() at (.7,3.3) {$w$}; 
   \node[T](t3) at (2,2){};    \node() at (2.3,1.7) {$t_3$};
 \node[T,label=below:$t_4$](t4) at (0,0){};           
 \node[T](t2) at (4,4){};   \node() at (4.3,4.3) {$t_2$};
                                          
     \node[T,label=above:$t_1$](t1) at (4,5){}; 
     
 \node[M]()at (4,3) {};  \node() at (4.3,2.7) {$t_2^\prime$}; 
 
\node[M,label=left:$s_2^\prime$]()at (0,3) {};   
\node() at (1.3,3.3) {$s_4^\prime$}; 
\node[M,label=below:$t_4^*$]()at (1,0) {};    
                       \node[txt]() at (3.6,4.4) {$P_1$};                                             
                         \node[txt]() at (1.5,.4) {$C$};
                          \node[txt]() at (3.4,1.4) {$X$};
                           \node[txt]() at (-.7,1) {$A(\ell)$};
\end{tikzpicture}                  

\end{center}
\caption{$\|NW\|=4$, $\|NE\|=2$}
\label{Q2}
\end{figure}
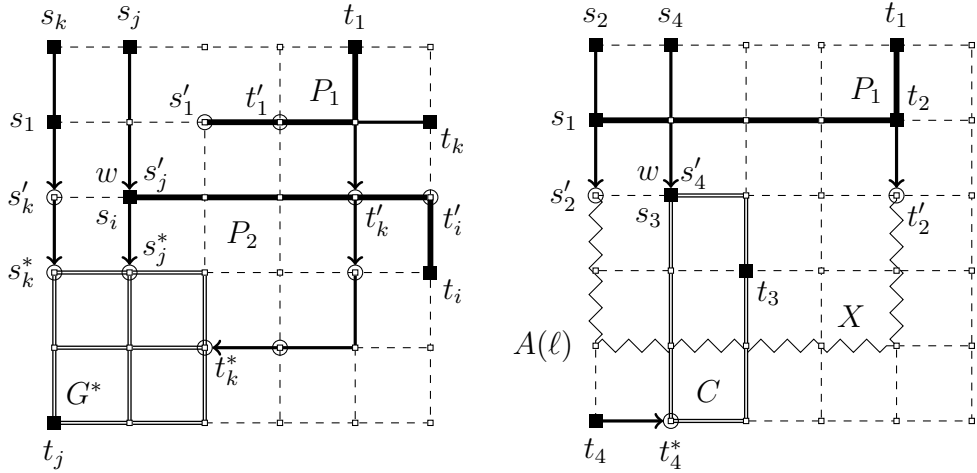
 Thus we assume that  
 $s_2^\prime,s_3^\prime,s_4^\prime$ are not distinct.  
 Let  $w\in A(3)\cap NW$ be the mate of two terminals of NW, that is $s_i^\prime=s_j^\prime=w$, for some $2\leq i<j\leq 4$ (actually, one of them is a terminal, $s_i^\prime=s_i$ or $s_j^\prime=s_j$). For $\|SW\|\leq 1$,
 $t_i$ or $t_j$ is a terminal in $NE\cup SE$, say $t_i\in NE\cup SE$. Now $t_i$ is mated to $t_i^\prime\in A(3)$  and a linkage for $\pi_i$ is completed by adding the $s_i^\prime,t_i^\prime$-path in
 $A(3)$. 
 Then  we mate the unlinked terminals of $NW\cup NE$  into the weakly $2$-linked quadrant SW to complete there the linkage for the remaining pairs $\pi_j,\pi_k$ (see in the left of Fig.\ref{Q2}).

 To tackle the last subcase we may assume that $t_1,t_k\in NE$ and
 $t_i,t_j\in SW$. W.l.o.g. let $i=3,j=4$,  that is 
 $s_3^\prime=s_4^\prime=w\in A(3)\cap NW$ and $t_3,t_4\in SW$. Let $P_1\subset NW\cup NE$ be a linkage for $\pi_1$, let
 $s_2^\prime\in (A(3)\setminus\{w\})\cap NW$ and $t_2^\prime \in A(3)\cap NE$, as obtained before. 
There is a row $A(\ell)$, $4\leq \ell\leq 6$ containing no terminal, thus a linkage for $\pi_2$ can be completed by
adding the path $Y$ from $s_2^\prime$ to $t_2^\prime$ in the union of their columns and $A(\ell)$. 

Define $C\subset SW$ to be the cycle bounded vertically by the two columns not containing $s_2^\prime$ and bounded horizontally by $A(3)$ and by row $A(5)$ if 
$\ell=6$ or by row $A(6)$ if $\ell\neq 6$. In this way we obtain a frame $[C,w]$. Since $t_3,t_4\notin X$, if $t_3$ and/or $t_4$ is not in $C$ it can be mated easily into $C$ along its row, thus the linkage for $\pi_3,\pi_4$ is obtained along the frame.  An example is shown in  the right of Fig.\ref{Q2}.\\

Case B: there is a quadrant containing a pair. Assume that $NW$ 
contains a pair and it has the largest number of terminals with this property.\\

B.1.  $\|NW\|=7$ or $8$. The strategy consists in linking two pairs in NW and
mating the remaining terminals into $G-NW$ which is weakly $2$-linked by Lemma \ref{w2linked}. This plan works due to Lemma \ref{heavy78}.\\
 
B.2. $\|NW\|=6$. 

First we assume that NW consists of three pairs, let $\pi_1,\pi_2,\pi_3\in NW$. We extend NW into the grid $H\cong P_4\Box P_4$ by including the $7$-path $L\subset (A(4)\cup B(4))$ between $(1,4)$ and $(4,1)$.
By Lemma \ref{w2linked}, $H$ is  $3$-path-pairable,
therefore, there is a linkage in $H$ for $\pi_1,\pi_2$ and $\pi_3$. Removing the edges of $H$ from $G$
a connected graph remains, which contains a linkage for $\pi_4$.

Next we assume that $NW$ contains $\pi_1, \pi_2$ and the terminals $s_3,s_4$.
W.l.o.g. assume that $\|NE\|\geq \|SW\|$, and let  $A=NW\cap A(3)$,  $B=NW\cap B(3)$ and $x_0=(3,3)$. We apply Lemma \ref{heavy6} for $Q=NW$. We obtain a linkage for $\pi_1$ (or $\pi_2$ or both),
and a mating of the remaining terminals into distinct vertices of $A\cup B$  such that $B-x_0$
contains at most one mate.
We extend these mating paths ending at $(A\cup B)\setminus\{x_0\}$ into at most three vertices of 
$\{(4,1),(4,2),(1,4),(2,4)\}$. Observe that after this step both quadrants NE and SW contain at most three (not necessarily distinct) terminals/mates.

Let $G^*=G-(A(1)\cup A(2)\cup B(1)\cup B(2))$, and let
 $L^*\subset (A(3)\cup B(3))$ be the $7$-path bounding $G^*$.
 By applying Lemma \ref{exit} (iii) twice, the 
 terminals/mates in NE and those in SW can be mated to distinct vertices of $L^*$  without using edges in $L^*$.
By Lemma \ref{3pp}, $G^*\cong P_4\Box P_4$ is $3$-path-pairable, hence the linkage for $\pi_2$ (or $\pi_1$) and $\pi_3,\pi_4$ can be completed in $G^*$.\\

B.3. $\|NW\|=5$.
First we assume that NW contains $\pi_1,\pi_2$ and a terminal $s_3$.
As in case B.2, we extend $NW$ into the grid $H\cong P_4\Box P_4$ by including 
the $7$-path $L\subset (A(4)\cup B(4))$ from $(1,4)$ to $(4,1)$. 
Let $s_3^*$ be any terminal-free vertex on $L$.
 Since  $H$ is $3$-path-pairable, there is a linkage in $H$ for the pairs $\pi_1,\pi_2$ and $\{s_3,s_3^*\}$.
Next we mate $s_3^*$ and the remaining terminals of $L$ into $G-H$ using edges from $G$ to $G-H$. 
By Lemma \ref{w2linked},  $G-H=A(5)\cup A(6)\cup B(5)\cup B(6)$ is weakly $2$-linked thus a linkage can be completed there for the pairs $\pi_3,\pi_4$.

Next we assume that $NW$ contains $\pi_1$ and the terminals  $s_2,s_3,s_4$. W.l.o.g. assume that $\|NE\|\geq\|SW\|$, and apply Lemma \ref{heavy5} with $NW$ to obtain a linkage for $\pi_1$
and mates $s_2^\prime,s_3^\prime\in A(3)$, $s_4^\prime\in B(3)\cap NW$.
For $\|NE\|\leq 2$ the solution is completed similarly to the one in B.2 as above.

For  $\|NE\|=3$ we extend the mating paths from
$s_2^\prime,s_3^\prime$ to vertices $s_2^*,s_3^*\in A(4)\cap SW$ and the mating path from 
$s_4^\prime$ to $s_4^{*}\in B(4)\cap NE$.
If $s_4^{*}\notin\{t_2,t_3,t_4\}$, then we apply Lemma  \ref{boundary} to 
obtain a linkage for $\pi_4$ and the mating of $t_2,t_3$ to $t_2^*,t_3^*\in A(4)$. 

Assume that $s_4^{*}=t_i$, for some $2\leq i\leq 4$. 
If $s_4^*=t_4$, then a linkage is obtained for $\pi_4$, and we mate $t_2,t_3$ to $t_2^*,t_3^*\in A(4)$.  W.l.o.g.  let  $s_4^*=t_2=w$. For $w=(3,4)$ we take a $w,t_4$-path in the weakly $2$-linked NE to complete the  linkage for $\pi_4$, and we mate $t_3$ to $t_3^\prime\in A(3)\cap NE$; then we mate $t_2,t_3^\prime$ to 
$t_2^*,t_3^*\in A(4)$. For $w=(2,4)$
we mate $t_2$ into $(4,4)$ along $B(4)$. Then we take a $w,t_4$-path in the weakly $2$-linked $NE\setminus (3,3)$ to complete the  linkage for $\pi_4$, and we mate $t_3$  to 
$t_3^*\in A(4)$. 
In each case we have $s_2^*,t_2^*,s_3^*,t_3^*\in A(4)$, thus  a linkage for $\pi_2,\pi_3$ can be completed in the 
weakly $2$-linked halfgrid $SW\cup SE$.\\

B.4:  $\|NW\|\leq 4$. Recall that NW contains a pair, say $\pi_1$, and $NW$ has the largest number of terminals with this property.

B.4.1. If $\|NW\|=2$ or $3$, then it contains one pair, say $\pi_1$, and by the choice of NW, we have $\|NE\|\leq 3$.
 Applying Lemma \ref{exit} (ii)  for NW and (iii) for NE, there is a linkage in NW for $\pi_1$ and there are distinct matings for the other terminals of $NW\cup NE$ into distinct vertices of
  $A(3)$ without using edges of $A(3)$. A  linkage
   for $\pi_2,\pi_3,\pi_3$ can be completed in the grid $G-(A(1)\cup A(2))\cong P_4\Box P_6$ which is $3$-path-pairable by Lemma \ref{3pp}.
 
B.4.2. Let $\|NW\|=4$. If NW contains two pairs then their linkage can be done in NW
and the linkage of the other two pairs in $G-NW$, since both $Q$ and $G-NW$ are $2$-path-pairable, by Lemma \ref{w2linked}. Thus we may assume that NW contains $\pi_1$ and terminals $s_2,s_3$. 
We distinguish cases where $\pi_4$ is contained by some quadrant $Q\neq NW$ or $s_4,t_4$ are in distinct quadrants.  

If $\pi_4\subset Q$, then we may assume, by symmetry, that $Q=$NE or SE. For $\pi_4\subset NE$,
we apply Lemma \ref{boundary} twice. The pair $\pi_1$ is linked in NW and $s_2,s_3$ are mated into $A(3)\cap NW$; the pair $\pi_4$ is linked in NE and the remaining terminals are mated to $A(3)\cap NE$.
Then the four (distinct) terminals/mates from $A(3)$ are mated further into  
 $SW\cup SE$ which is weakly $2$-linked by Lemma \ref{w2linked}. Thus 
 a linkage for $\pi_2, \pi_3$ can be completed in $SW\cup SE$.

Assume next that  $\pi_4\subset SE$. We apply Lemma \ref{boundary} with NW to obtain a linkage for $\pi_1$ and mates of $s_2,s_3$ through the appropriate boundary of NW into the neighboring quadrants. For $s_j$, $j=2,3$, we set $\psi(s_j)\subset A(3)$ if $t_j\in SW$, and 
$\psi(s_j)\subset B(3)$ if $t_j\in NE\cup SE$.
Using Lemma \ref{boundary} with SE
we take a linkage  for $\pi_4$ and mate $t_j\in SE$ into the neighboring quadrant where  $s_j$ is mated from NW.
Then we obtain the linkage for $\pi_2,\pi_3$ in the weakly $2$-linked quadrants SW and/or NE.

The remaining cases, where $\pi_4$ does not belong to any quadrant are listed in Fig.\ref{1P2S}. 

\begin{figure}[htp]\begin{center}
\tikzstyle{txt}  = [circle, minimum width=1pt, draw=white, inner sep=0pt]
\begin{tikzpicture}	
\draw (0,0) -- (1,0) -- (1,1) -- (0,1) -- (0,0);	
\draw (1.2,0) -- (2.2,0) -- (2.2,1) -- (1.2,1) -- (1.2,0);	
\draw (0,1.2) -- (1,1.2) -- (1,2.2) -- (0,2.2) -- (0,1.2);	
\draw (1.2,1.2) -- (2.2,1.2) -- (2.2,2.2) -- (1.2,2.2) -- (1.2,1.2);	
\node[txt](NW) at (.5, 1.9){$s_1$ $t_1$};
\node[txt]() at (.5, 1.45){$s_2$ $s_3$};
\node[txt](SW) at (.5, .7){$\emptyset$};
\node[txt](NE) at (1.7, 1.9){$s_4$};
\node[txt](SE) at (1.7, .7){$t_2$  {$t_3$}};
\node[txt]() at (1.7, .35){$t_4$};
\node[txt]() at (1.15, -.6){(I)};
\end{tikzpicture}
\hskip.8truecm		
\begin{tikzpicture}
\draw (0,0) -- (2.2,0) -- (2.2,1) -- (0,1) -- (0,0);	
\draw (0,1.2) -- (1,1.2) -- (1,2.2) -- (0,2.2) -- (0,1.2);	
\draw (1.2,1.2) -- (2.2,1.2) -- (2.2,2.2) -- (1.2,2.2) -- (1.2,1.2);	
\node[txt](NW) at (.5, 1.9){$s_1$ $t_1$};
\node[txt]() at (.5, 1.45){$s_2$ $s_3$};
\node[txt](NE) at (1.7, 1.9){$t_2$ $s_4$};
\node[txt](SE) at (1.1, .5){$t_3$  {$t_4$}};
\node[txt]() at (1.15, -.6){(II)};
\end{tikzpicture}
\hskip.8truecm
	\begin{tikzpicture}
\draw (0,0) -- (2.2,0) -- (2.2,1) -- (0,1) -- (0,0);	
\draw (0,1.2) -- (1,1.2) -- (1,2.2) -- (0,2.2) -- (0,1.2);	
\draw (1.2,1.2) -- (2.2,1.2) -- (2.2,2.2) -- (1.2,2.2) -- (1.2,1.2);	
\node[txt](NW) at (.5, 1.9){$s_1$ $t_1$};
\node[txt]() at (.5, 1.45){$s_2$ $s_3$};
\node[txt](NE) at (1.7, 1.9){$t_2$ $t_3$};
\node[txt]() at (1.7, 1.45){$s_4$};
\node[txt]() at (1.1, .5){$t_4$};
\node[txt]() at (1.15, -.6){(III)};
\end{tikzpicture}
			\hskip.8truecm
	\begin{tikzpicture}	
\draw (0,0) -- (1,0) -- (1,1) -- (0,1) -- (0,0);	
\draw (1.2,0) -- (2.2,0) -- (2.2,1) -- (1.2,1) -- (1.2,0);	
\draw (0,1.2) -- (1,1.2) -- (1,2.2) -- (0,2.2) -- (0,1.2);	
\draw (1.2,1.2) -- (2.2,1.2) -- (2.2,2.2) -- (1.2,2.2) -- (1.2,1.2);	
\node[txt](NW) at (.5, 1.9){$s_1$ $t_1$};
\node[txt]() at (.5, 1.45){$s_2$ $s_3$};
\node[txt](SW) at (.5, .7){$t_4$ };
\node[txt](NE) at (1.7, 1.9){ $s_4$};
\node[txt](SE) at (1.7, .7){ $t_2$ $t_3$};
\node[txt]() at (1.15, -.6){(IV)};
\end{tikzpicture}
\end{center}
\caption{$\|NW\|=4$, $\pi_1\subset NW$}
\label{1P2S}
\end{figure}
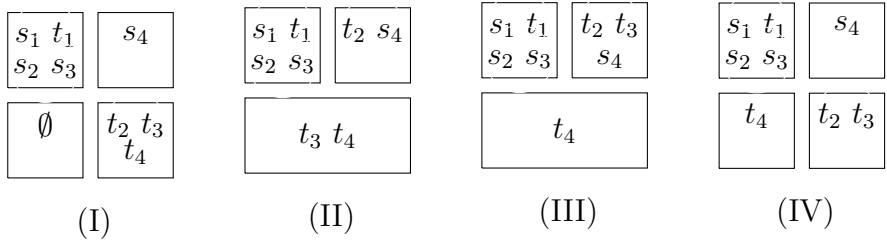

For type (I),  Lemma \ref{boundary} is used  for SE and for NW similarly as above. Thus we
obtain a linkage in NW for $\pi_1$, a linkage in SW for  $\pi_2,\pi_3$, and  a linkage in NE for $\pi_4$. 

For type (II),  we apply Lemma \ref{boundary} with NW to find a linkage for $\pi_1$ and to mate $s_2$ to $s_2^*\in B(4)\cap NE$ and to mate $s_3$ into $s_3^*\in A(4)\cap SW$. Then Lemma \ref{exit} (ii) is used with NE to complete a linkage in NW for $\pi_2$ and to mate $t_3$ to $t_3^*\in A(4)\cap SE$. 
The linkage for $\pi_3,\pi_4$ can be completed 
in  $SW\cup SE$ which is weakly $2$-linked, by Lemma \ref{w2linked}.

For types (III) and (IV), the linkage for  $\pi_2,\pi_3,\pi_4$ will be done by mating the terminals appropriately into 
 $G^*=G-(A(1)\cup A(2)\cup B(1)\cup B(2))\cong P_4\Box P_4$ which is  $3$-path-pairable, by Lemma \ref{3pp}.

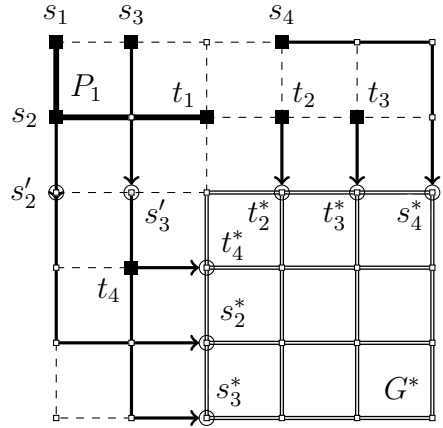
\begin{figure}[htp]\begin{center}
 \tikzstyle{M}  = [circle, minimum width=.5pt, draw=black, inner sep=2pt]
\tikzstyle{B} = [rectangle, draw=black!, minimum width=1pt, fill=white, inner sep=1pt]
\tikzstyle{T} = [rectangle, minimum width=.1pt, fill, inner sep=2.5pt]
 \tikzstyle{V} = [circle, minimum width=1pt, fill, inner sep=1pt]

 \tikzstyle{C} = [circle, minimum width=1pt, fill=white, inner sep=2pt]
 \tikzstyle{A} = [circle, draw=black!, minimum width=1pt, fill, inner sep=1.7pt]

\tikzstyle{txt}  = [circle, minimum width=1pt, draw=white, inner sep=0pt]
\tikzstyle{Wedge} = [draw,line width=2.2pt,-,black!100]

\begin{tikzpicture}
         \draw[->,line width=1.2pt]   (1,2) -- (1,0) -- (1.9,0); 
          \draw[->,line width=1.2pt] (1,5) -- (1,3.1); 
           \draw[->,line width=1.2pt]  (1,3) -- (1,2) -- (1.9,2); 
               \draw[->,line width=1.2pt] (0,4) -- (0,2.9);            
      \draw[->,line width=1.2pt]  (0,3) -- (0,2)  (0,2) -- (0,1) -- (1.9,1);                   
            \draw[->,line width=1.2pt]  (3,5) -- (5,5) -- (5, 3.1); 
                     \draw[->,line width=1.2pt]  (3,4) -- (3,3.1); 
                        \draw[->,line width=1.2pt]  (4,4) -- (4,3.1);                         
         \draw[double,line width=.5pt] (2,0) -- (5,0) -- (5,3) --(2,3)--(2,0); 
   \draw[double,line width=.5pt] (3,0) -- (3,3)  (4,0) --(4,3); 
   \draw[double,line width=.5pt] (2,1) -- (5,1) (2,2) --(5,2); 
           \draw[line width=2.2pt] (0,5) -- (0,4) --(2,4); 
 \draw[dashed] (0,5) -- (3,5) (2,4)-- (5,4) (0,3)--(2,3) (0,2)--(1,2) (0,1)--(0,0)--(1,0);
  \draw[dashed] (2,3) -- (2,5) (3,4)-- (3,5) (4,4)--(4,5);	
 \foreach \x in {0,1,2,3,4,5} 
    \foreach \y in {0,1,2,3,4,5} 
      { \node[B] () at (\x,\y) {};}
                                                 
          \node[T,label=above:$s_1$](s1) at (0,5){};
               \node[T,label=left:$s_2$](s2) at (0,4){};             
          \node[T,label=above:$s_3$](s3) at (1,5){};
          \node[T,label=above:$s_4$](s4) at (3,5){};
      \node[T](t1) at (2,4){};  \node[txt]() at (1.7,4.3){$t_1$} ; 
      \node[T](t3) at (4,4){}; \node[txt]() at (4.3,4.3){$t_3$} ; 
       \node[T](t2) at (3,4){};\node[txt]() at (3.3,4.3){$t_2$} ;             
      \node[T](t4) at (1,2){};\node[txt]() at (.7,1.7){$t_4$} ; 
                               
      \node[M,label=left:$s_2^\prime$](s2') at (0,3){} ;      
        \node[txt](t2*) at (2.7,2.7){$t_2^*$} ;  
    \node[M](s4*) at (5,3){} ;       
     \node[txt] () at (2.3,.35) {$s_3^{*}$};
          
            \node[M]() at (1,3){} ; \node[M]() at (2,1){} ; \node[M]() at (2,0){} ;
             \node[M]() at (2,2){} ; \node[M]() at (3,3){} ; \node[M]() at (4,3){} ;

           \node[txt]() at (2.35,1.4){$s_2^{*}$} ; 
             \node[txt]() at (1.35,2.7){$s_3^\prime$} ; 
                   \node[txt]() at (2.35,2.3){$t_4^{*}$} ; 
                    \node[txt]() at (3.7,2.7){$t_3^*$} ;        
   \node[txt]() at (4.7,2.7){$s_4^*$} ; 
   
          \node[txt]() at (4.6,.4){$G^*$} ; 
          \node[txt]() at (.4,4.4){$P_1$} ; 
\end{tikzpicture}

\end{center}
\caption{Solution for a pairing of type (III)}
\label{B32}
\end{figure}

 First we apply Lemma \ref{exit} (iii) for $NE$ to mate the terminals in $NE$ into distinct vertices of $A(3)$ 
 with mating paths not using edges in $A(3)$.
Next we use Lemma \ref{boundary} to obtain a linkage for $\pi_1$ and to mate
$s_j$ into $s_j^\prime\in A(3)\cap NW$, for $j=2,3$. If 
$s_j^\prime\neq (3,3)$, then we extend its mating path into $A(4)\cap SW$. Applying Lemma \ref{exit} (iii) for the terminals/mates in $SW$ we obtain the mates $s_2^{*}, s_3^{*}, t_4^*\in B(3)$. (In case of type (III) it is possible that $t_4\in SE$, when we just take $t_4^*=t_4$.) 
Then the linkage for  $\pi_2,\pi_3$, and $\pi_4$ can be completed, since all mating paths leading to $G^*$ are edge disjoint from $G^*$.  An example is shown in Fig.\ref{B32}.
 \end{proof}

\end{document}